\numberwithin{equation}{section}
\numberwithin{figure}{section}
\theoremstyle{plain}
\newtheorem*{thm*}{\protect\theoremname}
\theoremstyle{plain}
\newtheorem{thm}{\protect\theoremname}[section]
\theoremstyle{plain}
\newtheorem{lem}[thm]{\protect\lemmaname}
\theoremstyle{remark}
\newtheorem{remark}[thm]{\protect\remarkname}
\theoremstyle{plain}
\newtheorem{proposition}[thm]{\protect\propositionname}
\theoremstyle{plain}
\theoremstyle{plain}
\theoremstyle{plain}
\theoremstyle{plain}
\theoremstyle{plain}
\theoremstyle{plain}
\theoremstyle{plain}
\newcommand{\SLE}{\mathrm{SLE}}
\newcommand{\PR}{\mathbb{P}}
\newcommand{\EX}{\mathbb{E}}
\newcommand{\limPR}{\mathbf{P}}
\newcommand{\limEX}{\mathbf{E}}
\newcommand{\sZ}{\mathcal{Z}}
\newcommand{\bR}{\mathbb{R}}
\newcommand{\R}{\bR}
\newcommand{\bZ}{\mathbb{Z}}
\newcommand{\Z}{\bZ}
\newcommand{\bN}{\mathbb{N}}
\newcommand{\N}{\bN}
\newcommand{\bC}{\mathbb{C}}
\newcommand{\C}{\bC}
\newcommand{\bH}{\mathbb{H}}
\newcommand{\domain}{\Lambda}
\newcommand{\bdry}{\partial}
\newcommand{\confmap}{\phi} 
\newcommand{\confmapH}{\confmap} 
\newcommand{\ud}{\mathrm{d}}
\newcommand{\Arch}{\mathrm{LP}}
\newcommand{\LP}{\Arch}
\newcommand{\Mmat}{\mathscr{M}}
\newcommand{\PartF}{\sZ}
\newcommand{\HarmMeas}{\mathsf{H}}
\newcommand{\PoissonK}{\mathsf{P}}
\newcommand{\PoissonKH}{\mathcal{P}}
\newcommand{\ExcK}{\mathsf{K}}
\newcommand{\ExcKH}{\mathcal{K}}
\newcommand{\FominDet}{\mathbf{\Delta}}
\newcommand{\LPdet}[2]{\FominDet_{#1}^{#2}}
\newcommand{\eps}{\varepsilon}
\newcommand{\Gr}{\mathcal{G}}
\renewcommand{\Vert}{\mathcal{V}}
\newcommand{\Edg}{\mathcal{E}}
\newcommand{\GrH}{\mathcal{H}}
\newcommand{\EdgH}{\Edg (\GrH)}
\newcommand{\VertH}{\Vert (\GrH)}
\newcommand{\ein}{e_\mathrm{in}}
\newcommand{\eout}{e_\mathrm{out}}
\newcommand{\pin}{p_\mathrm{in}}
\newcommand{\pout}{p_\mathrm{out}}
\newcommand{\xin}{x_\mathrm{in}}
\newcommand{\xout}{x_\mathrm{out}}
\newcommand{\tree}{\mathcal{T}}
\newcommand{\pathfromto}[2]{#1 \rightsquigarrow #2}
\newcommand{\edgeof}[2]{{\langle #1 , #2 \rangle}}
\newcommand{\EdgeWeight}{\mathsf{w}}
\newcommand{\mesh}{\delta}
\newcommand{\shrinkto}{\to}
\newcommand{\InfiniteGr}{\Gamma}
\newcommand{\GreenK}{\mathsf{G}}
\newcommand{\GreenKH}{\mathcal{G}}
\newcommand{\HalfAngle}{\theta}
\newcommand{\Walks}{\mathscr{W}}
\newcommand{\Mart}{\mathscr{M}}
\newcommand*{\centerfloat}{%
  \parindent \z@
  \leftskip \z@ \@plus 1fil \@minus \textwidth
  \rightskip\leftskip
  \parfillskip \z@skip}
\providecommand{\corollaryname}{Corollary}
\providecommand{\lemmaname}{Lemma}
\providecommand{\propositionname}{Proposition}
\providecommand{\remarkname}{Remark}
\providecommand{\theoremname}{Theorem}
\providecommand{\conjecturename}{Conjecture}
\definecolor{kallecol}{rgb}{.75,.0,.55}
\begin{document}



\

\vspace{2.5cm}

\begin{center}
\LARGE \bf \scshape {
UST branches, martingales, and multiple SLE(2)
}
\end{center}

\vspace{0.75cm}

\begin{center}
{\large \scshape Alex Karrila}\\
{\footnotesize{\tt karrila@ihes.fr; alex.karrila@gmail.com}}\\
{\small{Institut des Hautes \'{E}tudes Scientifiques}}\\
{\small{35 Route de Chartres, 91440 Bures-sur-Yvette, France}}\bigskip{}
\end{center}

\vspace{0.75cm}

\begin{center}
\begin{minipage}{0.85\textwidth} \footnotesize
{\scshape Abstract.} 
We identify the local scaling limit of multiple boundary-to-boundary branches in a uniform spanning tree (UST) as a local multiple $\SLE(2)$, i.e., an $\SLE(2)$ process weighted by a suitable partition function. By recent results, this also characterizes the ``global'' scaling limit of the full collection of full curves. The identification is based on a martingale observable in the UST with $N$ branches, obtained by weighting the well-known martingale in the UST with one branch by the discrete partition functions of the models. The obtained weighting transforms of the discrete martingales and the limiting SLE processes, respectively, only rely on a discrete domain Markov property and (essentially) the convergence of partition functions. We illustrate their generalizability by sketching an analogous convergence proof for a boundary-visiting UST branch and a boundary-visiting $\SLE(2)$.
\end{minipage}
\end{center}

\vspace{0.75cm}
\tableofcontents

\bigskip{}

\addtocontents{toc}{\protect\setcounter{tocdepth}{1}}

\section{Introduction}

Schramm--Loewner evolution (SLE) type curves are conformally invariant random curves~\cite{Schramm-LERW_and_UST, RS-basic_properties_of_SLE, GRS-SLE_in_sc_domains}, known or conjectured to describe the scaling limits of random interfaces in many critical planar lattice models~\cite{Smirnov-critical_percolation, LSW-LERW_and_UST, SS05, CN07, Zhan-scaling_limits_of_planar_LERW, SS09, HK-Ising_interfaces_and_free_boundary_conditions, CDHKS-convergence_of_Ising_interfaces_to_SLE, Izyurov-Smirnovs_observable_for_free_boundary_conditions, GW18}. A particularly interesting variant is the local multiple SLE~\cite{Dubedat-commutation, KP-pure_partition_functions_of_multiple_SLEs}, which explicitly connects SLEs to Conformal field theory, the physics description of scaling limits of critical models~\cite{BBK-multiple_SLEs, Graham-multiple_SLEs, Dubedat-SLE_and_Virasoro_representations_fusion, KKP2, Peltola-towards_a_CFT_for_SLEs}. The main result of this article, Theorem~\ref{thm: main result}, proves local multiple SLE convergence for multiple boundary-to-boundary branches in a uniform spanning tree (UST) model on $\Z^2$ (see Figure~\ref{fig: UST}), as well as its natural generalization to other isoradial lattices.

The local multiple SLE convergence of multiple UST branches was predicted in~\cite[Conjecture~4.3]{KKP} (see also~\cite[Section~2]{Dubedat-commutation} and~\cite[Section~5.1]{KW-boundary_partitions_in_trees_and_dimers}). A proof outline for that conjecture was given by the author in~\cite[Theorem~6.8]{mie2} together with several consequences, most importantly determining the scaling limit of \emph{full} curves based on the conjectured \emph{local} limit. This paper fills the omitted part~\cite[Assumption~5.1]{mie2} of that proof outline, thus completing the proof.

Scaling limits of multiple chordal interfaces have been recently also studied in terms of the global multiple SLEs~\cite{LK-configurational_measure, Lawler-glob_NSLE, Wu17, PW, BPW}. 
The proof of our main theorem~\ref{thm: main result} provides an important example of the relation between convergence proofs based on local and global multiple SLEs, as discussed in~\cite[Section~1]{mie2}: On the one hand, due to recent characterization results for global multiple SLEs, rather short convergence proofs can nowadays be given for various discrete chordal curve models, when conditioned on the pairing of the boundary points by the curves~\cite{Wu17, BPW}. Such proofs require as an input the convergence of the corresponding one-curve model to chordal SLE (see~\cite{Zhan-scaling_limits_of_planar_LERW} on the UST). To extend such proofs to unconditional models, one in addition needs to solve the scaling limit probabilities of the different pairings of boundary points. This is done for some lattice models in~\cite{Smirnov-critical_percolation, PW18} and Theorem~\ref{thm: scaling limit WST connectivity probas} of this paper.
In conclusion, using~\cite{Zhan-scaling_limits_of_planar_LERW}, global multiple SLE theory, and Theorem~\ref{thm: scaling limit WST connectivity probas}, one could thus characterize the scaling limit of (unconditional) multiple UST branches in terms of global multiple SLEs.

In this paper, we instead convert the convergence proof of~\cite{Zhan-scaling_limits_of_planar_LERW}, based on martingale observables, from one to multiple UST branches. Key tools are a discrete Girsanov transform, converting discrete martingales from one to multiple UST branches,
and (essentially) Theorem~\ref{thm: scaling limit WST connectivity probas}, establishing the convergence of the conversion factors in such martingale transforms. Compared to using global multiple SLEs, this approach roughly speaking takes fewer inputs, but with the price of re-doing the input from~\cite{Zhan-scaling_limits_of_planar_LERW}.
The discrete Girsanov transform is not specific to the UST model and, if an analogue of Theorem~\ref{thm: scaling limit WST connectivity probas} were at hand, SLE convergence proofs could be promoted from one to multiple curves similarly in other lattice models; see the use of~\cite{Smirnov-critical_percolation} in~\cite{KS18} for comparison. 
We illustrate this generalizability by showing how to extend our proof to a boundary-visiting UST branch and boundary-visiting $\SLE(2)$.

Martingale arguments proving the convergence lattice interfaces to different multiple SLE type curves are given in~\cite{Izyurov-critical_Ising_interfaces_in_multiply_connected_domains, KS-bdary_loops_FK, KS18, mie2, Izyurov-FK_multiple_SLE}. The convergence of different variants of a single UST branch, or the closely related loop-erased random walk, to different SLE variants has been proven in~\cite{LSW-LERW_and_UST, Zhan-scaling_limits_of_planar_LERW, LV-natural_parametrization_for_SLE, CW-mLERW}, and for isoradial and even more general graphs in~\cite{CS-discrete_complex_analysis_on_isoradial, YY-Loop-erased_random_walk_and_Poisson_kernel_on_planar_graphs, Suzuki-Convergence_of_LERW_on_a_planar_graph_to_a_chordal_SLE(2), Uchi}.

\subsection*{Organization} Section~\ref{sec: setup and statement} gives the precise statement of the main result and a brief discussion of its consequences. The following three sections constitute the proof: Section~\ref{sec: connectivity probas} solves the discrete partition functions and martingales in a purely combinatorial setup, Section~\ref{sec: obs conv} establishes the convergence of these observables, and Section~\ref{sec: proof of main thm} identifies the scaling limit process via the limiting martingale observable. Some technical details are postponed to Appendices~\ref{app: disc harm} and~\ref{app: boundedness pf}. The analogue of the main result for a boundary-visiting UST branch is discussed in Section~\ref{se: bdry visiting branch}, and its (non-rigorous) interpretation in terms of boundary-visiting SLEs in Appendix~\ref{app: bdry vis SLE}.

\subsection*{Acknowledgements}
The author has benefited from useful discussions and correspondence with Dmitry Chelkak, Hugo Duminil-Copin, Konstantin Izyurov, Antti Kemppainen, Eveliina Peltola, Lauri Viitasaari, Hao Wu, and especially Kalle Kyt\"{o}l\"{a}, who also provided a simulation code for UST figures. The author also wishes to thank the anonymous referee of this paper for comments and observations that helped to improve it.
Finally, financial support from the Vilho, Yrj\"{o} and Kalle V\"{a}is\"{a}l\"{a} foundation and the ERC grant 757296 ``Critical Behavior of Lattice Models (CriBLaM)'' is gratefully acknowledged.

\addtocontents{toc}{\protect\setcounter{tocdepth}{2}}


\bigskip

\section{Setup and statement}
\label{sec: setup and statement}

This section introduces the precise setup and statement of the main result. The combinatorial model is defined Subsection~\ref{subsec: combinatorial models}. Section~\ref{subsec: isoradial graphs} introduces isoradial graphs on which the scaling limit results are obtained. The necessary background on Loewner evolutions and (multiple) SLEs are reviewed in Section~\ref{subsec: LE and SLE}, and in Section~\ref{subsec: statement} we are ready to state and discuss the theorem.

\subsection{The weighted spanning tree and its boundary-to-boundary branches}
\label{subsec: combinatorial models}

\subsubsection{The random spanning tree model}

Let $\GrH = (\VertH, \EdgH)$ be a connected finite graph. A \emph{spanning tree} of $\GrH$
is a subgraph $\tree$ that is connected and acyclic (is a tree) and contains all the vertices of $\GrH$ (is spanning). Endow the edges $\EdgH$ with positive weights $\EdgeWeight: \EdgH \to \R_{>0}$.
The \emph{weighted (random) spanning tree} on $\GrH$ is a random spanning tree with probabilities
\begin{equation*}
\PR [ \tree  ] \propto \prod_{e \in \tree} \EdgeWeight (e)
\end{equation*}
Note that if all edges $e \in \EdgH$ carry equal weight, this becomes a uniform random spanning tree.

\begin{figure}
\centering
\begin{overpic}[width=0.35\textwidth]{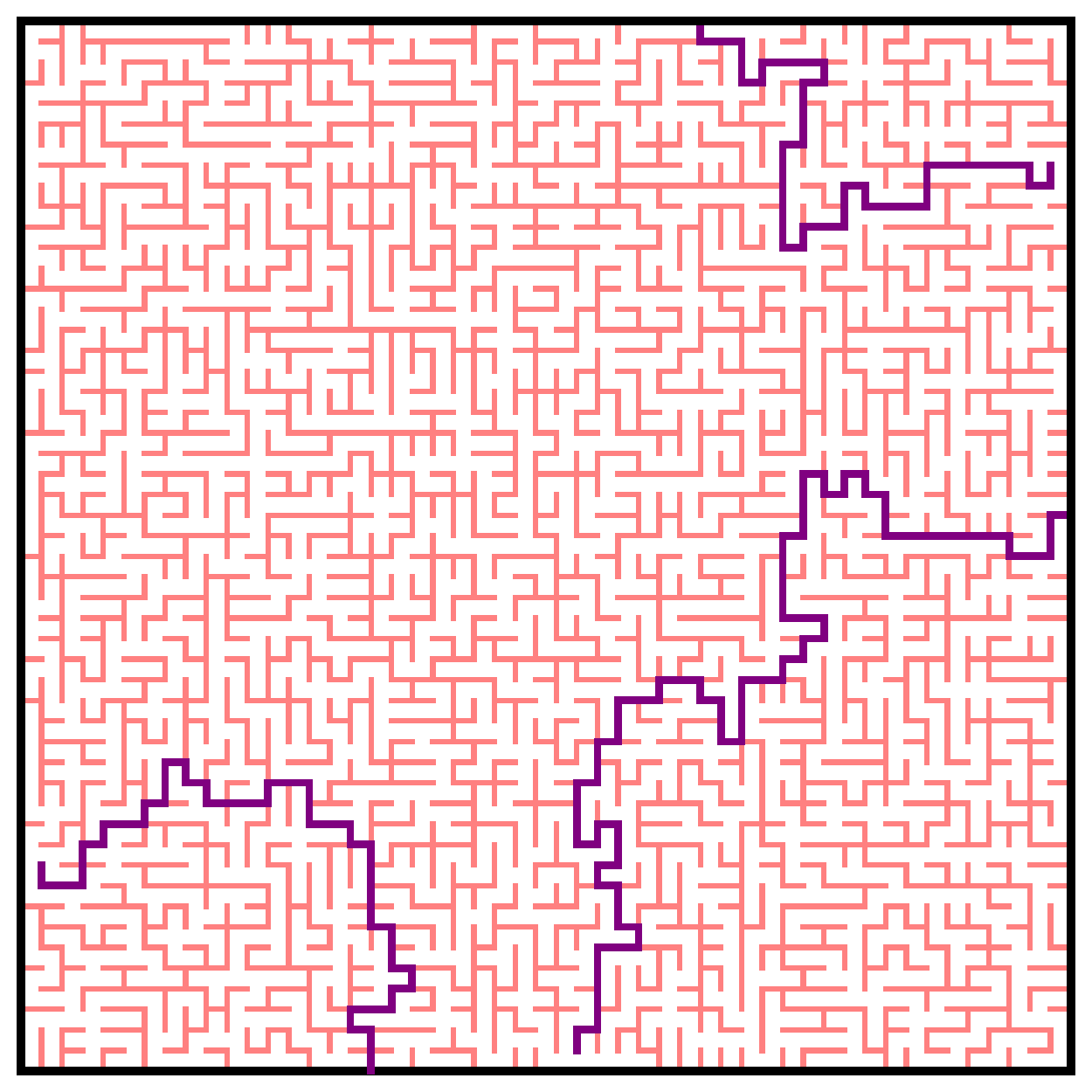}
 \put (-7,18) {\large $e_1$}
 \put (32,-5) {\large $e_2$}
  \put (52,-5) {\large $e_3$}
 \put (100,51) {\large $e_4$}
  \put (100,84) {\large $e_5$}
  \put (63,102) {\large $e_6$}
\end{overpic}
\caption{A WST sample on a $50 \times 50$ square grid graph, with the boundary branches from the interior vertices of the odd edges $e_1$, $e_3$, and $e_5$ reaching the boundary each via a different even edge $e_2$, $e_4$, or $e_6$.}
\label{fig: UST}
\end{figure}

We will in this paper always study the planar weighted spanning tree with wired boundary conditions, meaning the following. Let $\Gr=(\Vert, \Edg)$ be a finite connected planar graph with a fixed planar embedding. Declare some vertices adjacent to the infinite face of $\Gr$ 
as boundary vertices $\Vert^\bdry$, and the remaining vertices of $\Vert$ as interior vertices $\Vert^\circ$. The \emph{weighted spanning tree with wired boundary conditions (WST) on $\Gr$} is then a weighted spanning tree on the graph $\GrH = \Gr/\bdry$ obtained by identifying all the boundary vertices $\Vert^\bdry$ to a single vertex $v^\bdry$. For notational simplicity, we will identify the edges of $\Gr$ and $\Gr/\bdry$, hence both graphs endowed with the edge weights $\EdgeWeight$, and we regard the WST as a subgraph of both $\Gr/\bdry$ and $\Gr$ via this identification. Edges between the interior and boundary vertices $\Vert^\circ$ and $\Vert^\bdry$ are called \emph{boundary edges} $\bdry \Edg$.

\subsubsection{Boundary-to-boundary branches}

Note that in a WST tree, each interior vertex $v \in \Vert^\circ$ connects to the boundary vertices $\bdry \Vert$ by a unique simple path, called the \emph{boundary branch from $v$}. Let $e_1, \ldots, e_{2N} \in \bdry \Edg$ be distinct boundary edges, indexed counterclockwise along the boundary. Condition the WST on $\Gr$ on the event that the boundary branches from the interior vertices of the odd edges $e_1, e_3, \ldots, e_{2N - 1}$ reach the boundary $\bdry \Vert$ via the even edges $e_2, e_4, \ldots, e_{2N}$, each using a different even edge; see Figure~\ref{fig: UST} for illustration. (We assume that $\Gr$ and $e_1, \ldots, e_{2N}$ are such that this conditioning is possible.) This produces $N$ boundary branches in the WST, and adding the odd edges $e_1, e_3, \ldots, e_{2N - 1}$, we obtain $N$ chordal, vertex-disjoint simple paths on $\Gr$. These chordal vertex-disjoint paths are called \emph{WST boundary-to-boundary branches pairing $e_1, \ldots, e_{2N}$}.

The way the boundary-to-boundary branches pair the edges $e_1, \ldots, e_{2N}$ is encoded into a partition $\alpha$ of the set $\{ 1, 2, \ldots, 2N \}$ into disjoint pairs. Due to planarity and the disjointness of the boundary-to-boundary branches, $\alpha$ is a \emph{planar pair partition} a.k.a. a \emph{link pattern}, i.e., the pairs of $\alpha$ among the real-line points $\{ 1, 2, \ldots, 2N \}$ can be connected by $N$ disjoint curves in the upper half-plane. The set of link patterns on $\{ 1, 2, \ldots, 2N \}$ is denoted by $\LP_N$.

\subsection{Isoradial graphs}
\label{subsec: isoradial graphs}

\subsubsection{Isoradial lattices}

Let $\InfiniteGr$ be an infinite, locally finite planar graph embedded in the plane.
We say that $\InfiniteGr$ an \emph{isoradial lattice} with \emph{mesh size} $\delta$ if the following holds: the vertices adjacent to each face of $\InfiniteGr$ lie on the arc of a circle with radius $\mesh$, centered inside that face. We draw the dual graph $\InfiniteGr^*$ of $\InfiniteGr$ so that the dual vertices lie at these circle center points. The four endpoints of an edge $e$ and its dual $e^*$ then determine a rhombus of side length $\mesh$. We endow isoradial lattices with edge weights 
\begin{align*}
\EdgeWeight(e) = \tan \HalfAngle_e,
\end{align*}
where $\HalfAngle_e$ is half of the opening angle of the rhombus, as divided by $e$. As in \cite{CS-discrete_complex_analysis_on_isoradial}, we assume that the half angles $\HalfAngle_e$ are bounded uniformly away from $0$ and $\pi/2$: there exists $\eta > 0$ such that
\begin{align}
\label{eq: unif bound on half-angles}
\eta \le \HalfAngle_e \le \pi / 2 - \eta
\end{align}
for all edges $e$ of $\InfiniteGr$. When studying the scaling limit $\delta \downarrow 0$, we will always assume that same $\eta$ applies for all $\delta$.

\subsubsection{Simply-connected subgraphs}

Let $\Lambda_\Gr \subset \C$ be a bounded simply-connected domain, whose boundary consists of edges and vertices in $\InfiniteGr$. Let $\Gr=(\Vert, \Edg)$ be the planar graph with $\Vert = \Vert(\InfiniteGr) \cap \overline{\domain_\Gr}$ and $\Edg = \Edg(\InfiniteGr) \cap \overline{\domain_\Gr}$. We call $\Gr$ the \emph{simply-connected subgraph} of $\InfiniteGr$. We will always run the WST model on $\Gr$ with the isoradial edge weights and boundary vertices $\Vert^\bdry = \Vert \cap \bdry \domain_\Gr $.

\subsection{Loewner evolutions and SLE}
\label{subsec: LE and SLE}

We now briefly review Loewner evolutions and SLE in the upper half-plane $\bH$. We refer the reader to the textbooks~\cite{Lawler-SLE_book, Berestycki-SLE_book, Kemppainen-SLE_book} for more details.

\subsubsection{Loewner evolutions}

The \emph{Loewner (differential) equation} in $\bH$ determines a family of complex analytic mappings $g_t$, $t \ge 0$ by
\begin{align}
\nonumber
g_0 (z) &=z \qquad \text{for all } z \in \bH \\
\label{eq: Loewner ODE}
\partial_t g_t (z) &= \frac{2}{g_t(z) - W_t},
\end{align}
where $W_\cdot:\R_{ \ge 0} \to \R$ is a given continuous function, called the \emph{driving function}. For a given $z \in \bH$, the solution $g_t(z)$ of this equation is defined up to the (possibly infinite) hitting time  $\tau(z)$ of $0$ by the process $\vert g_t(z) - W_t \vert$. The set where $g_t$ is not defined is denoted by
\[
K_t =  \{ z \in \bH : \tau(z) \le t  \} .
\]
The sets $K_t$ are growing in $t$, and for all $t$ they turn out to be \emph{hulls}, i.e., $K_t$ are bounded and closed in $\bH$, and $H_t := \mathbb{H} \setminus K_t $ is simply-connected.
It also holds true that $g_t$, called the \emph{mapping-out function} of $H_t$, is a conformal map $H_t \to \bH$ such that
\begin{align*}
g_t (z) = z + \frac{2t}{z} + O(1/z^2) \qquad \text{as }z \to \infty.
\end{align*}

The Loewner differential equation thus maps a driving function $W_\cdot$ to a growing family of hulls $K_\cdot$. Conversely, a family of growing hulls $K_\cdot$ can be obtained as the hulls of some Loewner equation (after a suitable time reparametrization) if and only if the hulls satisfy the \emph{local growth property} and have a \emph{half-plane capacity} tending to infinity (see \cite{Kemppainen-SLE_book} for definitions). The families $K_\cdot$, $W_\cdot$, and $g_\cdot$ satisfying the above conditions can thus be regarded as equivalent, and we title them \emph{Loewner evolutions}. We equip the space of Loewner evolutions with the metric topology inherited from their driving functions
\begin{align*}
d(W, \tilde{W}) = \sum_{n \in \N} 2^{-n} \min \{ 1, \sup_{t \in [0,n]} \vert \tilde{W}_t - W_t \vert \},
\end{align*}
i.e., the topology of uniform convergence over compact subsets. Random Loewner evolutions will be studied in this topology. The Borel sigma  algebra $\mathscr{F}$ of this metric is, as usual, equipped with the right continuous filtration $(\mathscr{F}_t)_{t \geq 0}$ of the stopped functions $W_{\cdot \wedge t}$, i.e., $\mathscr{F}_t = \cap_{s > t} \sigma (W_{\cdot \wedge s})$.

\subsubsection{SLE type processes}

The Schramm--Loewner evolution $\SLE(\kappa)$ with parameter $\kappa > 0$ from $0$ to $\infty$ in $\bH$, for short $\SLE(\kappa)$ in $(\bH:0, \infty)$, is the random Loewner evolution driven by a scaled Brownian motion,
\begin{align*}
W_0 &= 0 \\
\ud W_t &= \sqrt{\kappa} \ud B_t.
\end{align*}
It is a chordal curve in the precise sense that, almost surely, there exists a continuous function $\gamma :\R_{ \ge 0} \to \overline{\bH}$, with $\gamma(0)=0$ and $\gamma(t) \tiny{\stackrel{t \to \infty}{\longrightarrow} } \infty$, such that $\bH \setminus K_t$ is the unbounded component of $\mathbb{H} \setminus \gamma ([0,t])$ for all $t$. 

This work concerns SLE type processes with a partition function. The starting point at time $t=0$ are $2N$ marked real points $X^{(1)}_0 < \ldots < X^{(2N)}_0 $ and a smooth, positive \emph{partition function} $\PartF: \R^{2N} \to \R_+ $. The driving function is bound to the evolution a special $j$:th marked point
\begin{align*}
W_t = X^{(j)}_t \qquad \text{for all } t,
\end{align*}
and $W_t$ and the remaining the marked real points $X^{(i)}_t = g_t (X^{(i)}_0)$, with $i \neq j$, evolve according to the SLE type stochastic differential equations
\begin{align}
\label{eq: defn of ptt fcn SLE}
\begin{cases}
\ud W_t &= \sqrt{\kappa} \ud B_t + \kappa \frac{\partial_j \PartF (X^{(1)}_t  \ldots  X^{(2N)}_t )}{\PartF (X^{(1)}_t  \ldots  X^{(2N)}_t )} \ud t \\
\ud X^{(i)}_t &= \frac{2 }{X^{(i)}_t - W_t} \ud t \qquad \text{for all } i \neq j.
\end{cases}
\end{align}

\subsubsection{Localizations}

The partition function SLEs obtained as scaling limits in this paper will be so-called local multiple SLEs. We will not need any inputs from the theory of local multiple SLEs, but it is necessary to comply with their inherently local nature.

A \emph{localization neighbourhood} is a bounded open neighbourhood $U$ of $ X^{(j)}_0 = W_0$ in $\bH$, whose closure is a hull bounded away from all the other marked (starting) points $X^{(i)}_0$, $ i \neq j$. Morally, we would like to consider the partition function SLE up to the time the hulls $K_\cdot$ exit the neighbourhood $U$. However, such an exit time is not continuous in our topology of Loewner evolutions, posing problems when studying weak convergence of lattice models. Hence, we use the \emph{continuous modification $\tau$ of the exit time of $U$}, as defined in~\cite{mie2}. The stopping at $\tau$ comes later than the exit time of $U$ but before the exit time of its $\epsilon$-thickening $U_\epsilon$, where a small $\epsilon > 0$ is chosen as an input in the definition of $\tau$. The precise definition is not important in the context of this paper.

Due to working only up to the stopping time $\tau$, it suffices to define an SLE partition function \linebreak $\PartF(x_1, \ldots, x_{2N})$ for $x_1 < \ldots < x_{2N}$.

\subsection{The main result and some consequences}
\label{subsec: statement}

The main result of this paper is given the following setup and notation.

Let $(\InfiniteGr_n)_{n \in \N}$ be a sequence of of isoradial lattices with mesh sizes $\mesh_n \downarrow 0$. Let $(\Gr_n; e_1^{(n)}, \ldots, e_{2N}^{(n)})$ be simply-connected subgraphs of $\InfiniteGr_n$ with a fixed number $2N$ of marked boundary edges. Assume that, as planar domains with marked boundary points, $\Gr_n$ are uniformly bounded and converge in the Carath\'{e}odory sense~(see, e.g.,~\cite{CS-discrete_complex_analysis_on_isoradial} for the definition) to a domain $(\domain; p_1, \ldots, p_{2N})$ with $2N$ distinct marked prime ends.

Let $\confmap_n: \domain_n \to \bH$ and $\confmap: \domain \to \bH$ conformal maps such that $\confmap_n^{-1} \to \confmap^{-1}$ uniformly over compact subsets of $\bH$. 
Such maps exist by the Carath\'{e}odory convergence, and can be chosen so that, denoting $\confmap (p_1, \ldots, p_{2N}) =(X^{(1)}_0  \ldots  X^{(2N)}_0)$, we have $ - \infty < X^{(1)}_0 < \ldots < X^{(2N)}_0 < \infty$. We also fix an index $1 \leq j \leq 2N $ and a localization neighbourhood $U$ of $X^{(j)}_0$.

Consider now WST boundary-to-boundary branches on $(\Gr_n; e_1^{(n)}, \ldots, e_{2N}^{(n)})$, mapped conformally to $\bH$ by the maps above. Let $W^{(n)}_\cdot$ denote the driving functions in the Loewner evolutions describing the growth of the boundary-to-boundary branch starting from $e_j^{(n)}$ and stopped at the continuous modification $\tau^{(n)}$ of the exit time of $U$.

\begin{thm}[WST boundary-to-boundary branches converge to local multiple $\SLE(2)$]
\label{thm: main result}
In the setup and notation above, $W^{(n)}_\cdot$ converge weakly to the SLE type driving function~\eqref{eq: defn of ptt fcn SLE} stopped at $\tau$, with parameter $\kappa = 2$, and partition function $\PartF_N$ as given in Equation~\eqref{eq: cont ptt fcns - unconditional}. If the WST boundary-to-boundary branches are in addition conditioned to form a given link pattern $\alpha \in \LP_N$, then the analogous convergence holds with the partition function $\PartF_\alpha$ given in~\eqref{eq: cont ptt fcns - conditional}.
\end{thm}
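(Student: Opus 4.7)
My plan is to follow the Kemppainen--Smirnov scheme: first establish tightness of the driving functions $(W^{(n)})_n$ stopped at $\tau^{(n)}$, then identify every subsequential limit via a discrete martingale observable. The one-branch case ($N=1$) is already handled in the literature by a discrete Poisson-kernel martingale (cf.~\cite{Zhan-scaling_limits_of_planar_LERW, CS-discrete_complex_analysis_on_isoradial}); the new ingredient for $N \geq 2$ is a \emph{discrete Girsanov transform} that promotes the one-branch martingale to an $N$-branch martingale, together with the scaling limit of the reweighting factor, which is precisely the content of Theorem~\ref{thm: scaling limit WST connectivity probas}.

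For tightness, I would first invoke the standard crossing estimates for a single WST branch from~\cite{Zhan-scaling_limits_of_planar_LERW}. Since conditioning on a fixed pairing $\alpha$, or on the existence of the full collection of $N$ branches, has probability bounded away from $0$ uniformly in $n$ by Theorem~\ref{thm: scaling limit WST connectivity probas}, the Kemppainen--Smirnov crossing condition transfers by absolute continuity, yielding tightness of $(W^{(n)})_n$ in the conditional and multi-branch settings as well.

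The heart of the identification step is as follows. By the discrete domain Markov property of the WST, once the branch from $e_j^{(n)}$ is explored up to step $t$, the restriction of the remaining tree to the unexplored region is itself a WST on a smaller graph with appropriately modified marked edges. Writing
\[
\PartF^{(n)}(t) \, := \, \PR\bigl[\, \text{the remaining branches realize the prescribed pairing} \,\big|\, \text{branch } j \text{ up to step } t\, \bigr],
\]
this process is a martingale under the one-branch WST law and plays the role of the Radon--Nikodym derivative from the one-branch law to the $N$-branch law. Denoting by $M^{(n)}_1(t;z)$ the classical one-branch discrete harmonic observable (a ratio of discrete Poisson kernels of the unexplored domain), the product $M^{(n)}(t;z) := M^{(n)}_1(t;z) \, \PartF^{(n)}(t)$ is then a martingale under the $N$-branch law; the conditional-on-$\alpha$ version is identical with $\PartF^{(n)}$ replaced by its restriction to the pairing $\alpha$. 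Along any tight subsequence, Section~\ref{sec: obs conv} and Theorem~\ref{thm: scaling limit WST connectivity probas} give $M^{(n)}_1 \to M_1$ (the continuous Poisson kernel of the unexplored domain) and $\PartF^{(n)}(t) \to \PartF_N(X^{(1)}_t, \ldots, X^{(2N)}_t)/\PartF_N(X^{(1)}_0, \ldots, X^{(2N)}_0)$, so the limiting product is a continuous martingale for the limit law. Applying It\^{o}'s formula to this martingale, together with the Loewner equation~\eqref{eq: Loewner ODE}, and requiring the resulting finite-variation term to vanish identically in $z$, should recover exactly the SDE~\eqref{eq: defn of ptt fcn SLE} with $\kappa=2$: the diffusion is inherited from the $N=1$ computation while the factor $\PartF_N$ contributes precisely the extra drift $\kappa\, \partial_j \log \PartF_N$. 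This pins down every subsequential limit uniquely, so the full sequence converges.

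The main obstacle I expect lies in obtaining uniform control on the reweighting factor and the derivatives of the observable. To pass the discrete martingale relation to the scaling limit and to legitimately differentiate $\PartF^{(n)}$ in the It\^{o} step, I will need $\PartF^{(n)}(t)$ bounded both away from $0$ and from $\infty$, together with controlled moduli of continuity and boundary regularity of $M^{(n)}_1$, uniformly in $n$ and in $t \leq \tau^{(n)}$. This is exactly why the continuous modification $\tau$ of the exit time of $U$ is used: up to time $\tau$ the growing hull stays bounded away from the other marked points $X^{(i)}_0$ with $i \neq j$, so that $\PartF^{(n)}(t)$ remains uniformly comparable to its initial value. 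The rigorous lower bound is the content of Appendix~\ref{app: boundedness pf}, while the discrete harmonic-analytic regularity inputs for $M^{(n)}_1$ on isoradial graphs are assembled in Appendix~\ref{app: disc harm}.
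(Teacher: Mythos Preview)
Your overall strategy matches the paper's: precompactness via crossing estimates, a discrete Girsanov transform promoting the one-branch Poisson-kernel martingale to an $N$-branch martingale, convergence of the reweighting factor, and identification of the limit via It\^{o}. However, the Girsanov step is applied in the wrong direction. If $\PartF^{(n)}(t)\propto Z^{\Gr_t}_N/Z^{\Gr_t}_1$ is the Radon--Nikodym density $d\PR_N/d\PR_1\vert_{\mathcal F_t}$, then a $\PR_1$-martingale $M^{(n)}_1$ becomes a $\PR_N$-martingale upon \emph{dividing} by $\PartF^{(n)}(t)$, not multiplying: the correct $\PR_N$-martingale is $M^{(n)}_1\cdot Z^{\Gr_t}_1/Z^{\Gr_t}_N$, as in Lemma~\ref{lem: discrete Girsanov transforms} and Proposition~\ref{prop: discrete mgales collected}. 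Your product $M^{(n)}_1\cdot\PartF^{(n)}$ is not a martingale under either measure, and in the limit it would place $\PartF_N$ in the numerator rather than the denominator of the observable~\eqref{eq: cts mgale}, flipping the sign of the drift you extract from the It\^{o} computation.

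Two further omissions would block the argument even after the sign is fixed. First, the claimed limit ``$\PartF^{(n)}(t)\to\PartF_N(X^{(1)}_t,\ldots,X^{(2N)}_t)/\PartF_N(X^{(1)}_0,\ldots,X^{(2N)}_0)$'' misses the conformal covariance factors $\prod_{i\ne j}g_t'(X^{(i)}_0)^{-1}$ that appear when discrete excursion kernels on $\Gr_t$ are compared to those on $\Gr$ (Theorem~\ref{prop: disc harm conv results}(iii)--(iv)); without them the would-be continuous observable is \emph{not} a martingale, since the drift at order $(z_t-W_t)^0$ fails to cancel (compare~\eqref{eq: stoch D 5}). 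Second, you invoke It\^{o}'s formula without first establishing that the weak limit $W$ is a semimartingale; the paper handles this separately via an implicit-function argument on the observable (Lemma~\ref{lem: semimg dr fcn}), and your sketch does not supply an alternative.
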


The functions $\PartF_N$ and $\PartF_\alpha$ above are so-called local multiple SLE partition functions at $\kappa=2$, and $W$ is hence a local multiple $\SLE(2)$ driving function; see~\cite[Theorem~4.1]{KKP} or Theorem~\ref{cor: CFT degeneracy PDEs} below.

For several remarkable consequences of Theorem~\ref{thm: main result}, see~\cite[Theorems~5.2,~5.8,~6.8, and Proposition~5.9]{mie2}. Note that for these consequences it is important that no boundary regularity assumptions were imposed on the domains $\domain_n$ or $\domain$.
Some simpler consequences of Theorem~\ref{thm: main result} and by-products of its proof are discussed below.

First, Theorem~\ref{thm: main result} is also a conformal invariance result. Indeed, the description of the scaling limit is given merely in terms of $\bH$ and $X^{(1)}_0  \ldots  X^{(2N)}_0$, not their conformal (pre)images, the actual limiting domain $\domain $ and prime ends $p_1, \ldots, p_{2N}$.

In the case of a single curve, $N=1$, the theorem above is equivalent to the well known $\SLE(2)$ convergence of a WST branch~\cite{Zhan-scaling_limits_of_planar_LERW}. The scaling limit appears as a partition function SLE since the conformal maps were chosen so that it is an $\SLE(2)$ from $X_1$ to $X_2$, not from $0$ to $\infty$.

As a by-product of the proof, we obtain the convergence of the WST boundary-to-boundary branch link pattern probabilities on isoradial graphs and without any boundary regularity assumptions (cf.~\cite{KW-boundary_partitions_in_trees_and_dimers, KW-double_dimer_pairings_and_skew_Young_diagrams} and~\cite[Theorem~3.16]{KKP}).

\begin{thm}[Scaling limit of link pattern probabilities]
\label{thm: scaling limit WST connectivity probas}
In the setup and notation above, the probability that the WST boundary-to-boundary branches form the link pattern $\alpha$ tends to \linebreak $\PartF_\alpha (X^{(1)}_0  \ldots  X^{(2N)}_0) / \PartF_N (X^{(1)}_0  \ldots  X^{(2N)}_0) $ as $n \to \infty$.
\end{thm}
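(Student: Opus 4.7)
The strategy is to identify the discrete partition functions corresponding to link patterns via combinatorial Fomin / Kenyon--Wilson type formulas, and then to invoke the convergence of the relevant discrete observables on isoradial graphs.

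First, as developed in Section~\ref{sec: connectivity probas}, the link pattern probability admits a combinatorial expression
\[
\PR\bigl[\text{WST branches form } \alpha\bigr] \;=\; \frac{\sZ^{(n)}_\alpha}{\sZ^{(n)}_N}, \qquad \sZ^{(n)}_N = \sum_{\beta \in \LP_N} \sZ^{(n)}_\beta,
\]
where each $\sZ^{(n)}_\alpha$ is a (signed) polynomial combination -- morally an $N \times N$ determinant in the spirit of Fomin's formula -- of discrete boundary excursion/Poisson-type kernels $\ExcK_{\Gr_n}(e_i^{(n)}, e_j^{(n)})$ evaluated at the marked boundary edges. This reduces the proof to controlling the scaling limits of these boundary kernels.

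Next, the convergence results of Section~\ref{sec: obs conv} identify the limits: after multiplication by an appropriate common power of $\mesh_n$, the discrete boundary excursion kernels converge on isoradial graphs to the corresponding continuum boundary excursion kernels in $\domain$ under the assumed Carathéodory convergence. Conjugating by $\confmap_n \to \confmap$, the continuum limits become explicit rational expressions in the points $X_0^{(i)}$. Consequently each $\sZ^{(n)}_\alpha$ converges, after the same $\mesh_n$-rescaling, to a continuum quantity $\PartF_\alpha(X_0^{(1)}, \ldots, X_0^{(2N)})$, and similarly $\sZ^{(n)}_N \to \PartF_N$; these limits are precisely the local multiple $\SLE(2)$ partition functions featured in Theorem~\ref{thm: main result}. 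Taking the ratio, the common mesh-normalizations cancel, yielding the claimed scaling limit.

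The main technical obstacle is the kernel convergence in the preceding step \emph{without} any boundary regularity hypotheses on $\domain_n$ or $\domain$, since standard proofs of Poisson-kernel convergence typically use boundary regularity to upgrade interior convergence to uniform convergence up to the boundary. The key observation enabling the Carathéodory-only argument is that only excursion kernels evaluated at the marked boundary edges $e_i^{(n)} \to p_i$ enter $\sZ^{(n)}_\alpha$, and the local mesh-scaling factor of each such kernel depends solely on the local isoradial geometry at $e_j^{(n)}$, not on the global domain; hence it factors identically in numerator and denominator of $\sZ^{(n)}_\alpha/\sZ^{(n)}_N$, and the surviving conformally (hence Carathéodory-) covariant expressions pass to the continuum limit. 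The discrete harmonic-measure estimates collected in Appendix~\ref{app: disc harm} provide the remaining quantitative control required to justify this cancellation and to promote interior kernel convergence to convergence of the boundary-evaluated ratios.
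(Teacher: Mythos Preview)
Your overall strategy coincides with the paper's: write the probability as $Z^{(n)}_\alpha / Z^{(n)}_N$ via the Kenyon--Wilson formula (Theorem~\ref{thm: disc connectivity probas}), establish convergence of the building blocks, and take the ratio. However, there is a genuine imprecision in your normalization step that would not go through as stated.

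You write that ``after multiplication by an appropriate common power of $\mesh_n$, the discrete boundary excursion kernels converge.'' This is not true without boundary regularity: on a rough boundary there is no universal power of $\mesh_n$ that normalizes $\ExcK^{(n)}(e_i^{(n)}, e_j^{(n)})$ to a finite nonzero limit. Your third paragraph senses the issue and tries to repair it by saying that a ``local mesh-scaling factor'' at each $e_j^{(n)}$ cancels in the ratio, but this remains a heuristic; you do not identify what that factor is or prove it cancels. The paper's actual mechanism is different and concrete: one divides numerator and denominator by $\prod_{i=1}^{2N} \PoissonK^{(n)}(v^{(n)}, e_i^{(n)})$, where $v^{(n)}$ approximates a fixed interior reference point. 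Each $N\times N$ excursion-kernel determinant then becomes a polynomial in the ratios $\ExcK^{(n)}(e_i, e_j)\big/\big(\PoissonK^{(n)}(v^{(n)}, e_i)\,\PoissonK^{(n)}(v^{(n)}, e_j)\big)$, and it is precisely these ratios that Theorem~\ref{prop: disc harm conv results}(iii) shows converge, with no boundary regularity needed. The Poisson-kernel normalizers, not a mesh power, are what encode the ``local boundary factor'' you allude to.

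One further small gap: to pass from convergence of numerator and denominator separately to convergence of the ratio, you need the limiting denominator to be nonzero. The paper invokes the positivity $\PartF_\alpha > 0$ from \cite[Theorem~4.1]{KKP}; you should cite this as well.
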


Second, the proof of Theorem~\ref{thm: main result} also provides an alternative proof showing that the partition functions $\PartF_N$ and $\PartF_\alpha$ satisfy the PDEs that appear in the definition of the so-called local multiple SLE partition functions (see, e.g.,~\cite[Appendix~A]{KP-pure_partition_functions_of_multiple_SLEs}). The same PDEs appear in Conformal field theory as degeneracy PDEs for correlation functions of primary fields~\cite{Peltola-towards_a_CFT_for_SLEs}. A different proof for the theorem below was given in~\cite[Theorem~4.1]{KKP} by a direct computation based on the explicit expressions~\eqref{eq: cont ptt fcns - unconditional} and~\eqref{eq: cont ptt fcns - conditional}.

\begin{thm}
\label{cor: CFT degeneracy PDEs}
The partition functions $\PartF_N$ and $\PartF_\alpha$ satisfy for all $j \in \{ 1, \ldots, 2N \}$ the PDEs
\begin{align*}
\partial_{jj}  \PartF_\star (x_1, \ldots, x_{2N}) 
+
\sum_{\substack{i=1 \\ i \neq j }}^{2N}     \frac{ 2 }{ x_i - x_j  } \partial_{i}  \PartF_\star (x_1, \ldots, x_{2N}) 
-
\sum_{\substack{i=1 \\ i \neq j }}^{2N} 
\frac{ 2 }{ (  x_i - x_j  )^2 }  \PartF_\star (x_1, \ldots, x_{2N})   &= 0.
\end{align*}
\end{thm}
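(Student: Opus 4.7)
The plan is to read the PDE off the martingale observable that underpins the proof of Theorem~\ref{thm: main result}. That proof (Section~\ref{sec: proof of main thm}) produces a continuous local martingale $M_t$ as the scaling limit (Section~\ref{sec: obs conv}) of the discrete $N$-branch WST martingales built in Section~\ref{sec: connectivity probas}, and reads off the limit driving function from its It\^o expansion. My first step is to extract from that construction the precise continuum form of $M_t$: up to the standard conformal covariance factors of a boundary field of weight $h_{1,2} = (6-\kappa)/(2\kappa) = 1$ at $\kappa = 2$, it reads
\begin{equation*}
M_t \;=\; G_t \cdot \PartF_\star\big(X^{(1)}_t, \ldots, X^{(2N)}_t\big), \qquad G_t \;:=\; \prod_{i \neq j} g_t'\big(X^{(i)}_0\big).
\end{equation*}
Crucially, $M_t$ is a local martingale under the plain $\SLE(2)$ measure (\emph{not} under the weighted limiting law), because the discrete observables approximating it are, by construction in Section~\ref{sec: connectivity probas}, martingales relative to the one-branch WST process, whose $\SLE(2)$ scaling limit is classical~\cite{Zhan-scaling_limits_of_planar_LERW}.

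Second, I apply It\^o's formula to $M_t$ under $\SLE(2)$. Using $dW_t = \sqrt{2}\,dB_t$, $d\langle W\rangle_t = 2\,dt$, the Loewner equations $dX^{(i)}_t = \frac{2}{X^{(i)}_t - W_t}\,dt$ for $i \neq j$, and the standard identity $d\log g_t'(X^{(i)}_0) = -\frac{2}{(X^{(i)}_t - W_t)^2}\,dt$, and noting that the finite-variation factors in $G_t$ contribute no cross-variation with $\PartF_\star(X_t)$, a direct computation yields
\begin{equation*}
dM_t \;=\; \sqrt{2}\,G_t\,\partial_j \PartF_\star\,dB_t \;+\; G_t \Big[\,\partial_{jj}\PartF_\star \;+\; \sum_{i \neq j} \frac{2\,\partial_i \PartF_\star}{X^{(i)}_t - W_t} \;-\; \sum_{i \neq j} \frac{2\,\PartF_\star}{(X^{(i)}_t - W_t)^2}\,\Big]\,dt,
\end{equation*}
with all derivatives of $\PartF_\star$ evaluated at $(X^{(1)}_t, \ldots, X^{(2N)}_t)$. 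The local martingale property forces the bracket to vanish identically in $t$, which is exactly the PDE of the theorem evaluated along the Loewner trajectory with active index $j$.

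Finally, I upgrade this dynamical identity to the pointwise PDE on the chamber $\{x_1 < \ldots < x_{2N}\}$: any such point can be realized as the initial configuration of a Loewner evolution (e.g.\ via isoradial approximating subdomains of $\bH$ with the prescribed marked prime ends), and evaluation of the vanishing-drift identity at $t=0$, combined with smoothness of $\PartF_\star$, then gives the PDE at every such point. Repeating the argument for each $j \in \{1, \ldots, 2N\}$ (by varying the active index in Theorem~\ref{thm: main result}) yields all the stated equations. The principal obstacle I anticipate is in the first step, namely checking that the limit observable really has the clean form $G_t \cdot \PartF_\star(X_t)$, with exponent exactly $1$ on each $g_t'(X^{(i)}_0)$ and no residual $W_t$-dependent prefactor; this is a careful bookkeeping of the conformal covariance of the discrete Poisson/excursion kernel ingredients entering the $N$-branch partition function, to be done as part of Sections~\ref{sec: obs conv}--\ref{sec: proof of main thm} (with standard isoradial tools from~\cite{CS-discrete_complex_analysis_on_isoradial}). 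Once that form is pinned down, It\^o does the rest mechanically.
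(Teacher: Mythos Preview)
Your approach is genuinely different from the paper's, and the first step as written does not match what the paper actually produces.

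\textbf{What the paper does.} The continuous martingale constructed in Proposition~\ref{prop: cts martingale} is
\[
\Mart^{(\star)}_t(z,\omega)=\frac{\PoissonKH(g_t(z),W_t)}{\PartF_\star(X^{(1)}_t,\ldots,X^{(2N)}_t)}\prod_{i\neq j}\frac{\PoissonKH(\omega,X^{(i)}_0)}{g_t'(X^{(i)}_0)},
\]
i.e.\ $\PartF_\star$ and the conformal factors sit in the \emph{denominator}, and there is an extra $z$-dependent Poisson kernel in the numerator. This is a martingale under the \emph{weak limit} measure $\limPR_\star$ of the $N$-branch WST, \emph{not} under chordal $\SLE(2)$. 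At this stage the law of $W$ is unknown; the paper first proves semimartingaleness (Lemma~\ref{lem: semimg dr fcn}), then expands the drift of $\Im f$ in powers of $1/(z_t-W_t)$. The coefficients of $1/(z_t-W_t)^2$ and $1/(z_t-W_t)$ pin down $d\langle W\rangle_t=2\,dt$ and the drift $2\,\partial_j\PartF_\star/\PartF_\star$ (this is Theorem~\ref{thm: main result}); only afterwards does the coefficient of $1/(z_t-W_t)^0$ yield the PDE of Theorem~\ref{cor: CFT degeneracy PDEs}. So the PDE is a by-product of the driving-function identification, extracted from an extra coefficient in the same $z$-dependent expansion.

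\textbf{Where your proposal diverges and where it breaks.} Your martingale $M_t=G_t\,\PartF_\star(X_t)$ is essentially the reciprocal of the paper's, with no auxiliary $z$. More seriously, you assert it is a local martingale under ``plain $\SLE(2)$'' with $dW_t=\sqrt{2}\,dB_t$, justified by the one-branch WST. But the one-branch WST on $(\Gr;e_j,e_k)$ converges to chordal $\SLE(2)$ in $(\bH;X^{(j)}_0,X^{(k)}_0)$, which carries the drift $2\,\partial_j\PartF_1/\PartF_1=4/(X^{(k)}_t-W_t)$; it is not driftless. The discrete martingale under $\PR_1$ is the ratio $Z^{\Gr_t}_\star/Z^{\Gr_t}_1$ (Lemma~\ref{lem: Z-ratios are cond probas}), whose scaling limit is not $G_t\,\PartF_\star$ but rather $G_t\,\PartF_\star/(g_t'(X^{(k)}_0)\,\PartF_1(W_t,X^{(k)}_t))$ up to constants. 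With the correct martingale and the correct drift, your It\^o expansion gives the $j$-th PDE for $\PartF_\star$ \emph{minus} the $j$-th PDE for $\PartF_1$, and one concludes only because $\PartF_1(x_j,x_k)=\pi^{-1}(x_k-x_j)^{-2}$ satisfies its PDE trivially. This is salvageable and is essentially the ``alternative strategy (i)'' the paper sketches in Section~\ref{sec: proof of main thm}; but as you wrote it, the measure, the form of $M_t$, and hence the It\^o computation are each off, and the paper's own proof goes by a different route.
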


\bigskip

\section{The combinatorial model}
\label{sec: connectivity probas}

In this section, we study the combinatorial WST model on a finite connected planar graph $\Gr$. We assume that such $\Gr$ comes with a planar embedding, a choice of boundary vertices, and edge weights $\EdgeWeight$. The main results --- and the only ones referred to in other sections --- are Proposition~\ref{prop: discrete mgales collected}, establishing WST martingales, and Theorem~\ref{thm: disc connectivity probas}, expressing them in terms of discrete harmonic functions.

\subsection{WST connectivity partition functions} 

In this subsection, we define some basic discrete harmonic objects, define the connectivity partition functions of the WST, and review their solution in terms of the discrete harmonic objects, given in~\cite{KW-boundary_partitions_in_trees_and_dimers, KW-double_dimer_pairings_and_skew_Young_diagrams} and~\cite{KKP}.

\subsubsection{The discrete Green's function, Poisson kernel, and excursion kernel}
\label{subsubsec: discrete kernels}

Define the weight $\EdgeWeight (v) $ of a vertex $v$ as the total weight of adjacent edges,
\begin{align*}
\EdgeWeight (v) := \sum_{e = \edgeof{v}{u} \in \Edg} \EdgeWeight (e).
\end{align*}
For $w, v \in \Vert$, denote by $\Walks (v, w)$ the set of finite length nearest-neighbour walks (sequences of adjacent vertices) on the graph $\Gr$, whose first vertex is $v$ and last $w$. Let $\lambda \in \Walks (v, w)$ be a walk with the vertex sequence $v=v_0, v_1, \ldots, v_m = w$ and the edge sequence $e_1 = \edgeof{v_0}{v_1}, e_2 = \edgeof{v_1}{v_2}, \ldots, e_m = \edgeof{v_{m-1}}{v_m}$. Define the weight of the walk $\lambda$ by
\begin{align*}
\EdgeWeight (\lambda) := \frac{ \prod_{k=1}^m \EdgeWeight (e_k) }{ \prod_{k=0}^m \EdgeWeight (v_k) }.
\end{align*}
Note that this weight is preserved under reversing the walk.

Denote by $\Walks^\circ (v, w) \subset \Walks (v, w)$ the walks that only contain interior vertices. The discrete Green's function on $\Gr: \Vert \times \Vert \to \R$ is now defined as the partition function of walks in $\Walks^\circ (v, w)$
\begin{align*}
\GreenK (v, w) := \sum_{\lambda \in \Walks^\circ (v, w) } \EdgeWeight (\lambda).
\end{align*}
Note that $\GreenK (v, w)$ indeed is the Green's function of the negative discrete Laplacian (see Section~\ref{subsec: basic notions of disc harm fcns} in Appendix~\ref{app: disc harm}), and that $\GreenK (v, w)= \GreenK (w, v)$.

If $w$ is the interior vertex of a boundary edge $e \in \bdry \Edg$, then we call $\GreenK (v, w) $ a discrete Poisson kernel between $v$ and $e$ and denote
\begin{align*}
\GreenK (v, w) =: \PoissonK (v, e).
\end{align*}
Note that Green's function has zero boundary values, $\GreenK (v, w) = 0$ for all $w \in \bdry \Vert $, so the Poisson kernel $\PoissonK (v, e)$ can be seen as the discrete derivative of $\GreenK (v, w)$ in $w$, along the boundary edge $e$. Note also that $\PoissonK (v, e)$ is a discrete harmonic function in $v$, see again Section~\ref{subsec: basic notions of disc harm fcns}.

If also $v$ is the interior vertex of a boundary edge $\tilde{e} \in \bdry \Edg$, then we call $\GreenK (v, w) $ a discrete excursion kernel between $\tilde{e}$ and $ e$ and denote
\begin{align*}
\GreenK (v, w) = \PoissonK (v, e) =: \ExcK (\tilde{e}, e).
\end{align*}
The excursion kernel can be interpreted as the discrete derivative of $\PoissonK (v, e)$ in $v$, along the boundary edge $\tilde{e}$.
The reason for introducing these redundant notations is their different behaviour in the scaling limit.

\subsubsection{Excursion kernel determinants}
\label{subsubsec: disc ExcKDet}

Let $\alpha \in \LP_N$ be a link pattern. The \emph{left-to-right orientation} of $\alpha $ is the ordered collection of ordered pairs $((a_1, b_1), \ldots, (a_N, b_N))$ such that $\alpha = \{\{a_1, b_1\}, \ldots, \{a_N, b_N\}\} $ and furthermore $a_i < b_i$ for all $i$ and $a_1 < a_2 < \ldots < a_N$.

Let $e_1, \ldots, e_{2N}$ be boundary edges of $\Gr$, and let $((a_1, b_1), \ldots, (a_N, b_N))$ be the left-to-right orientation of a link pattern $\alpha \in \LP_N$. We define the \emph{excursion kernel determinant} $\LPdet{\alpha}{\ExcK} (e_1 , \ldots , e_{2N})$ of $\alpha$ on $(\Gr; e_1, \ldots, e_{2N})$ by
\begin{align}\label{eq: definition of LPdet in the discrete}
\LPdet{\alpha}{\ExcK} (e_1 , \ldots , e_{2N})
    := \det \Big( \ExcK(e_{a_k}, e_{b_\ell}) \Big)_{k,\ell=1}^N .
\end{align}

\subsubsection{Solution of the WST connectivity partition functions}
\label{subsubsec: soln of connectivity probas}

Consider now the WST measure $\PR$ on $\Gr$. Let $e_1, \ldots, e_{2N}$ be distinct boundary edges, indexed in counterclockwise order. Denote by $E_N$ the event that the WST boundary branches from the interior vertices of the odd edges $e_1, e_3, \ldots, e_{2N - 1}$ reach the boundary $\bdry \Vert$ via the even edges $e_2, e_4, \ldots, e_{2N}$, each using a different even edge. (Recall that this is the event required in the construction of boundary-to-boundary branches in Section~\ref{subsec: combinatorial models}.) We denote
\begin{align*}
Z^{\Gr}_N (e_1 , \ldots , e_{2N}) := \PR [E_N ].
\end{align*}
We wish to keep the graph $\Gr$ (equipped with embedding, boundary vertices, and edge weights), as well as the marked boundary edges, explicit in this notation for later purposes.

For a link pattern $\alpha \in \LP_N$ denote by $E_\alpha$ the event that $E_N$ occurs and additionally the obtained WST boundary-to-boundary branches pair the edges $e_1, \ldots, e_{2N}$ according to the link pattern $\alpha$. Denote
\begin{align*}
Z^{\Gr}_\alpha (e_1 , \ldots , e_{2N}) := \PR [E_\alpha ],
\end{align*}
so obviously
\begin{align}
\label{eq: total disc part fcn}
Z^{\Gr}_N (e_1 , \ldots , e_{2N})  = \sum_{\alpha \in \LP_N} Z^{\Gr}_\alpha (e_1 , \ldots , e_{2N}).
\end{align}
We call $Z^{\Gr}_\alpha$ the \emph{connectivity $\alpha$ partition function of the WST} and $Z^{\Gr}_N$ the \emph{total WST connectivity partition function}.

The WST connectivity partition functions $Z^{\Gr}_\alpha$, for all $\alpha \in \LP_N$, were solved in terms  of excursion kernels determinants in~\cite{KW-boundary_partitions_in_trees_and_dimers, KW-double_dimer_pairings_and_skew_Young_diagrams}. We follow here the presentation in~\cite[Theorem~3.12 and Section~3.6]{KKP}. 

\begin{thm}
\label{thm: disc connectivity probas}
We have for all $\alpha \in \LP_N$
\begin{align}
\label{eq: soln of disc ptt fcns}
Z^{\Gr}_\alpha (e_1 , \ldots , e_{2N}) = \left( \prod_{ \substack{i =2 \\ i \; \mathrm{even}}}^{2N} \EdgeWeight (e_i) \right) \sum_{\beta \in \LP_N} \Mmat^{-1}_{\alpha, \beta}  \LPdet{\beta}{\ExcK} (e_1 , \ldots , e_{2N}),
\end{align}
where $\Mmat^{-1}_{\alpha, \beta} $ only depend on the link patterns $\alpha, \beta \in \LP_N$ as given explicitly in~\cite[Example~2.10]{KKP}.
\end{thm}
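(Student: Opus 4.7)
The plan is to combine Wilson's algorithm with a Fomin-type determinant identity for the excursion kernel and then invert a combinatorial linear relation on the space of link patterns. This is essentially the argument of~\cite{KW-boundary_partitions_in_trees_and_dimers, KW-double_dimer_pairings_and_skew_Young_diagrams} in the language of~\cite{KKP}.

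\textbf{Step 1: Wilson's algorithm.} First I would sample the WST via Wilson's algorithm: starting with the tree containing only the identified boundary vertex $v^\bdry$, successively add the loop-erasure of a random walk from each remaining interior vertex, stopped on hitting the current tree. By choosing to process the interior endpoints of the odd edges $e_1, e_3, \ldots, e_{2N-1}$ first, in this order, the event $E_\alpha$ translates into the event that $N$ successive loop-erased walks exit through specified even edges and link them to the odd edges according to $\alpha$. Since the loop-erasure of a walk weighted by $\EdgeWeight(\lambda)$ has the same total weight as the underlying walk (the standard LERW--random walk correspondence), this rewrites $Z^{\Gr}_\alpha$ as a sum over certain $N$-tuples of (non-loop-erased) walks in $\Gr$ between the designated interior endpoints, with each passage through the boundary edge $e_i$ producing the factor $\EdgeWeight(e_i)$ in the statement.

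\textbf{Step 2: Fomin's identity.} I would then apply Fomin's identity (a boundary variant of the Karlin--McGregor/Lindstr\"om--Gessel--Viennot lemma): the determinant $\LPdet{\beta}{\ExcK}(e_1, \ldots, e_{2N})$ expands as a signed sum, over permutations in the definition of the determinant, of products of walk partition functions between interior endpoints of paired boundary edges. A sign-reversing involution on walk tuples that share a vertex collapses this to a sum over vertex-disjoint walk tuples; after loop-erasing each walk and reading off the resulting planar pair partition, this yields an identity of the form
\begin{align*}
\LPdet{\beta}{\ExcK}(e_1, \ldots, e_{2N})
\; = \; \left( \prod_{\substack{i=2 \\ i \; \mathrm{even}}}^{2N} \EdgeWeight(e_i) \right)^{-1} \sum_{\alpha \in \LP_N} \Mmat_{\beta, \alpha} \, Z^{\Gr}_\alpha(e_1, \ldots, e_{2N}),
\end{align*}
where the coefficients $\Mmat_{\beta, \alpha}$ count the crossing patterns that carry a system realizing connectivity $\alpha$ to the left-to-right orientation of $\beta$, with the sign of the associated permutation.

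\textbf{Step 3: Inversion and main obstacle.} By~\cite[Example~2.10]{KKP}, the matrix $\Mmat = (\Mmat_{\beta, \alpha})_{\alpha, \beta \in \LP_N}$ is invertible (it is essentially a meander-type matrix on planar pair partitions), so inverting the above relation yields~\eqref{eq: soln of disc ptt fcns}. The delicate step is step~2: verifying that the sign-reversing involution cancels exactly the non-planar walk tuples and that what survives, after loop-erasure and the correct assignment of vertex-weight normalizations inherent in $\GreenK$, $\PoissonK$, and $\ExcK$, reproduces $Z^{\Gr}_\alpha$ with the precise boundary-edge weight factor $\prod_{i \, \mathrm{even}} \EdgeWeight(e_i)$ and with the combinatorially correct sign/multiplicity $\Mmat_{\beta, \alpha}$. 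This bookkeeping of weights between walks, loop-erased walks, and excursion kernels along boundary edges is where the argument is most prone to error, and it is the step that fixes the precise entries of $\Mmat$.
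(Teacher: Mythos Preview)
The paper does not give its own proof of this theorem: it is stated as a cited result from~\cite{KW-boundary_partitions_in_trees_and_dimers, KW-double_dimer_pairings_and_skew_Young_diagrams} in the formulation of~\cite[Theorem~3.12 and Section~3.6]{KKP}, with only a remark that the extra factor $\prod_{i\ \mathrm{even}} \EdgeWeight(e_i)$ relative to~\cite{KKP} comes from a different normalization of the discrete excursion kernel. Your outline---Wilson's algorithm to realize the branches as successive loop-erased walks, Fomin's sign-reversing involution to relate $\LPdet{\beta}{\ExcK}$ to a linear combination $\sum_\alpha \Mmat_{\beta,\alpha} Z^\Gr_\alpha$, and then inversion of $\Mmat$---is precisely the argument carried out in those references, so your proposal is faithful to the underlying proof even though the present paper does not reproduce it.
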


The formula above differs from that appearing in~\cite[Section~3.6]{KKP} in terms of the edge weight factor. This is due to a different choice of normalization in the definition of discrete excursion kernels.

\subsection{Discrete martingales}

In this subsection, we study discrete martingales under growing WST boundary-to-boundary branches. The reader should notice that the discussion of this subsection could be carried out more or less similarly in several other lattice models. What is special about the WST is Theorem~\ref{thm: disc connectivity probas} above, which connects the obtained martingales to discrete harmonic functions.

Let us introduce some notation.
Let $(\Gr; e_1, \ldots, e_{2N})$ be as above. Fix a link pattern $\alpha \in \LP_N$.
Denote the conditional WST measures by
\begin{align*}
\PR_N [ \; \cdot \;  ] := \PR [ \; \cdot \; \vert \; E_N ] \qquad \text{and} \qquad \PR_\alpha [ \; \cdot \; ] := \PR [\; \cdot \; \vert \; E_\alpha ],
\end{align*}
where $E_N$ and $E_\alpha \subset E_N$ are as above. 
 Under these conditional measures, we are interested in the WST boundary-to-boundary branch $\gamma$ (a sequence of adjacent vertices $\gamma(0), \gamma(1), \ldots$) starting from the marked boundary edge $e_j = \edgeof{ \gamma(0) }{ \gamma(1) }$. Note that under $\PR_\alpha$, we also know the last edges $ e_k \in \{ e_1, \ldots, e_{2N} \}$ of $\gamma$. Denote by $E_1$ the WST event that the boundary branch from the interior vertex of the odd-index edge $e_j$ or $ e_k$ reaches the boundary $\bdry \Vert$ via the even-index one. Hence $E_\alpha \subset E_1$ and if $N=1$ then indeed $E_N = E_1$. Denote
 \begin{align*}
\PR_1 [ \; \cdot \; ] := \PR [ \; \cdot \; \vert \; E_1 ].
 \end{align*}
Under the conditional measures $\PR_N$, $\PR_\alpha$, and $\PR_1$, denote by $\mathcal{F}_t$, $t \in \{1, 2, \ldots \}$, the sigma algebras of the $t$ first vertices $( \gamma(0), \ldots, \gamma(t) )$ on the path $\gamma$ (so $\mathcal{F}_1$ is the trivial sigma algebra). By discrete martingales we mean martingales under these measures and this filtration.

\subsubsection{Connectivity probability martingales}

Our first martingales are the conditional probabilities of the event $E_\alpha$ given $\mathcal{F}_t$.
Denote by $\Gr_t$ the planar graph $\Gr=(\Vert, \Edg)$ with boundary vertices $\Vert_t^\bdry = \Vert^\bdry \cup \{ \gamma(0), \ldots, \gamma(t-1) \} $ and denote $e_j^{(t)} = \edgeof{\gamma(t-1)}{\gamma(t)}$ (so $\Vert^\bdry_{t=1} = \Vert^\bdry$ and $e_j^{(1)} = e_j$). Define the shorthand notations
\begin{align*}
Z^{ \Gr_t }_\alpha &:= Z^{ \Gr_t }_\alpha (e_1, \ldots, e_{j-1}, e_j^{(t)} , e_{j+1}, \ldots, e_{2N}) \qquad \text{and} \\
Z^{ \Gr_t }_N &:= Z^{ \Gr_t }_N (e_1, \ldots, e_{j-1}, e_j^{(t)} , e_{j+1}, \ldots, e_{2N})  \qquad \text{and} \\
Z^{ \Gr_t }_1 &:=
\begin{cases} Z^{ \Gr_t }_1 ( e_j^{(t)} , e_k), \qquad j \text{ odd} \\
Z^{ \Gr_t }_1 (e_k, e_j^{(t)}  ), \qquad j \text{ even}.
\end{cases}
\end{align*}
Below we use the discrete domain Markov property to construct conditional probability martingales from these partition functions.
Note that Theorem~\ref{thm: disc connectivity probas} expresses these partition functions as polynomials in discrete excursion kernels on $\Gr_t$. 

\begin{lem}
\label{lem: Z-ratios are cond probas}
We have
\begin{align*}
\EX_N [ \mathbbm{1} \{ E_\alpha \} \; \vert \; \mathcal{F}_t ] = Z^{ \Gr_t }_\alpha / Z^{ \Gr_t }_N 
\qquad \text{and} \qquad
\EX_1 [ \mathbbm{1} \{ E_\alpha \}  \; \vert \; \mathcal{F}_t ] = Z^{ \Gr_t }_\alpha / Z^{ \Gr_t }_1.
\end{align*}
\end{lem}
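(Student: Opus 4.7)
The proof reduces to verifying the identity on each $\mathcal{F}_t$-atom of positive probability. Fix an admissible initial segment $(v_0, \ldots, v_t)$ of $\gamma$ and write $A = \{(\gamma(0), \ldots, \gamma(t)) = (v_0, \ldots, v_t)\}$; on this atom the graph $\Gr_t$ and the edge $e_j^{(t)}$ are deterministic. Using $E_\alpha \subset E_N$ and $E_\alpha \subset E_1$, and expanding the conditional expectations on the atom $A$, it suffices to compute the three probabilities $\PR[E_\alpha \cap A]$, $\PR[E_N \cap A]$, and $\PR[E_1 \cap A]$ up to a common positive factor that does not depend on $\alpha$; taking ratios then yields the two claimed identities.

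The input is the discrete domain Markov property of the WST boundary branch $\gamma$, which I would establish via Wilson's algorithm applied first to $v_1$: the branch $\gamma$ then arises as (the natural extension of) the loop-erasure of a random walk from $v_1$ to $\Vert^\bdry$. By the LERW Markov property on the prefix $(v_1, \ldots, v_t)$, the continuation is a LERW from $v_t$ in $\Gr$ killed on $\Vert^\bdry \cup \{v_1, \ldots, v_{t-1}\}$ and conditioned to exit via $\Vert^\bdry$; completing the tree by running Wilson on the remaining vertices then reproduces exactly the WST on $\Gr_t$ conditioned on the event
$$B \;=\; \{\text{the $\Gr_t$-WST boundary branch from $e_j^{(t)}$ exits at $\Vert^\bdry$, not at any of $v_1, \ldots, v_{t-1}$}\}.$$
Indeed, the remaining vertex sets in $\Gr$ and $\Gr_t$ coincide and the stopping sets used by Wilson after the first branch are identical, so the two constructions give the same law.

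Under this DMP, the event $E_\alpha$ corresponds to the analogous event $\tilde E_\alpha$ of the $\Gr_t$-WST (for the marked edges $e_1, \ldots, e_{j-1}, e_j^{(t)}, e_{j+1}, \ldots, e_{2N}$), and similarly for $E_N$ and $E_1$. Crucially, each of $\tilde E_\alpha$, $\tilde E_N$, $\tilde E_1$ is contained in $B$: the branch from $e_j^{(t)}$ is forced to end at an original boundary edge $e_k \in \bdry \Edg$, hence at a vertex of $\Vert^\bdry$. Writing $\PR^{\Gr_t}$ for the WST law on $\Gr_t$, this gives
$$\PR[E_\star \cap A] \;=\; \PR[A]\cdot\PR^{\Gr_t}[\tilde E_\star \mid B] \;=\; \frac{\PR[A]}{\PR^{\Gr_t}[B]}\cdot Z^{\Gr_t}_\star \qquad (\star \in \{\alpha, N, 1\}),$$
which is the required common-factor form with $C(v_0, \ldots, v_t) = \PR[A]/\PR^{\Gr_t}[B]$. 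Dividing the $\alpha$-case by the $N$-case (resp.\ the $1$-case) kills $C$ and proves both assertions of the lemma.

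The only genuinely delicate point is the correct formulation of the DMP: after revealing the prefix, the tail of $\gamma$ is obliged to avoid the newly-added boundary $\{v_1, \ldots, v_{t-1}\}$, so the effective law on the remaining tree is not the unconditional WST on $\Gr_t$ but its $B$-conditioning. Since $B$ depends only on the realization of the prefix and not on the link pattern, the factor $\PR^{\Gr_t}[B]$ is shared across numerators and denominators and cancels in the ratios defining $\PR_N[E_\alpha \mid A]$ and $\PR_1[E_\alpha \mid A]$. This cancellation is precisely the combinatorial mechanism behind the martingale property that the lemma records.
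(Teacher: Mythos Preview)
Your proof is correct and follows essentially the same approach as the paper's own proof: both reduce the $t=1$ case to the definitions and invoke the domain Markov property of the WST for general $t$. The paper handles this in two sentences, citing the domain Markov property from an external reference, whereas you spell out that property in detail via Wilson's algorithm, correctly identifying the conditioning on the event $B$ (that the continued branch exits through the original boundary $\Vert^\bdry$ rather than the newly wired prefix) and observing that $\tilde E_\alpha, \tilde E_N, \tilde E_1 \subset B$ so that $\PR^{\Gr_t}[B]$ cancels in the ratios.
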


\begin{proof}
Note that we have
\begin{align*}
\EX_N [ \mathbbm{1} \{ E_\alpha \} ]   &= \PR [ E_\alpha  \; \vert \; E_N] 
= \PR [ E_\alpha ] / \PR [ E_N] 
= Z^{ \Gr_1 }_\alpha / Z^{ \Gr_1 }_N ,
\end{align*}
and similarly $\EX_1 [ \mathbbm{1} \{ E_\alpha \} ] = Z^{ \Gr_1 }_\alpha / Z^{ \Gr_1 }_1.$ This actually proves the claim for conditioning on the trivial sigma algebra $\mathcal{F}_1$. The same deduction, combined with the domain Markov property for the WST (see, e.g.,~\cite[Proof of precompactness in Theorem~6.8]{mie2}) can be used to prove the claim for any $\mathcal{F}_t$.
\end{proof}

\subsubsection{Discrete Girsanov transforms}

The measure $\PR_\alpha$ can be seen as either $\PR_N$ or $\PR_1$ conditional on the event $E_\alpha$. We now recall how martingales under the unconditional measure can be transformed to the conditional one and vice versa, by a discrete analogue of Girsanov's transform. Analogous martingale transforms hold in a wide generality but, as with the previous martingales, we prefer to state and prove them for WST, in the form in which they will be applied.

\begin{lem}
\label{lem: discrete Girsanov transforms}
If $M_t^{(\alpha)}$ is an $\mathcal{F}_t$ martingale under $\PR_\alpha$, then
\begin{align*}
M_t^{(N)} = M_t^{(\alpha)} Z^{ \Gr_t }_\alpha / Z^{ \Gr_t }_N 
\end{align*}
is an $\mathcal{F}_t$ martingale under $\PR_N$. If $M_t^{(1)}$ is an $\mathcal{F}_t$ martingale under $\PR_1$, then
\begin{align*}
M_t^{(\alpha)} = M_{t \wedge T}^{(1)} Z^{ \Gr_{t \wedge T} }_1  / Z^{ \Gr_{t \wedge T} }_\alpha
\end{align*}
is an $\mathcal{F}_t$ martingale under $\PR_\alpha$; here $T$ is the $\mathcal{F}_t$ stopping time given by the first time $s$ for which the next step $\gamma(s + 1)$ may be taken under $\PR_1$ so that $Z^{ \Gr_{s + 1} }_\alpha = 0$.
\end{lem}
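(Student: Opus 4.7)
The plan is to identify the Radon--Nikodym density processes on the filtration $(\mathcal{F}_t)$ relating the three conditional measures $\PR_N$, $\PR_\alpha$, $\PR_1$, and then invoke the abstract Bayes/Girsanov principle: if $\PR'$ is absolutely continuous with respect to $\PR$ with $(\mathcal{F}_t,\PR)$-density process $D_t$, then a process $M_t$ is a $\PR'$-martingale if and only if $M_t\,D_t$ is a $\PR$-martingale (equivalently, $N_t/D_t$ is a $\PR'$-martingale whenever $N_t$ is a $\PR$-martingale, provided the ratio is well-defined).

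\textbf{First claim.} Since $E_\alpha \subseteq E_N$, the measure $\PR_\alpha$ is nothing but $\PR_N$ conditioned on the event $E_\alpha$. Lemma~\ref{lem: Z-ratios are cond probas} therefore identifies the density process
$$
D_t^{N} \;:=\; \frac{d\PR_\alpha}{d\PR_N}\bigg|_{\mathcal{F}_t}
\;=\; \frac{\EX_N[\mathbbm{1}\{E_\alpha\}\mid \mathcal{F}_t]}{\PR_N[E_\alpha]}
\;=\; \frac{Z^{\Gr_1}_N}{Z^{\Gr_1}_\alpha}\cdot\frac{Z^{\Gr_t}_\alpha}{Z^{\Gr_t}_N},
$$
which is a positive $\PR_N$-martingale. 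Given a $\PR_\alpha$-martingale $M_t^{(\alpha)}$, the product $M_t^{(\alpha)}\,D_t^N$ is a $\PR_N$-martingale by the Girsanov principle, and discarding the multiplicative constant $Z^{\Gr_1}_N/Z^{\Gr_1}_\alpha$ gives exactly the asserted process $M_t^{(N)}$.

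\textbf{Second claim.} Analogously, since $E_\alpha \subseteq E_1$, $\PR_\alpha$ is $\PR_1$ conditioned on $E_\alpha$, with density process $D_t^{1} = (Z^{\Gr_1}_1/Z^{\Gr_1}_\alpha)\cdot Z^{\Gr_t}_\alpha/Z^{\Gr_t}_1$. Given a $\PR_1$-martingale $M_t^{(1)}$, the quotient $M_t^{(1)}/D_t^1$ is the desired $\PR_\alpha$-martingale, up to an irrelevant multiplicative constant. The role of the stopping time $T$ is purely to make this quotient well-defined pathwise: under $\PR_1$ the boundary branch may take a step inconsistent with $\alpha$, after which $Z^{\Gr_{s+1}}_\alpha$ vanishes and the denominator becomes $0$. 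Under $\PR_\alpha$, however, such steps have probability zero, so $T = \infty$ almost surely, and stopping at $T$ does not affect the $\PR_\alpha$-martingale property while ensuring a well-defined $(\mathcal{F}_t)$-adapted process.

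\textbf{Verification and main obstacle.} The abstract argument may equivalently be carried out by a direct one-line computation: Bayes' rule together with Lemma~\ref{lem: Z-ratios are cond probas} yields
$$
\PR_N[\gamma(t+1)=v\mid\mathcal{F}_t] \;=\; \PR_\alpha[\gamma(t+1)=v\mid\mathcal{F}_t]\,\cdot\,\frac{Z^{\Gr_t}_\alpha\,Z^{\Gr_{t+1}}_N}{Z^{\Gr_t}_N\,Z^{\Gr_{t+1}}_\alpha}\,,
$$
so that the identity $\EX_N[M_{t+1}^{(N)}\mid\mathcal{F}_t] = M_t^{(N)}$ reduces, after summation over the admissible next vertices $v$, to the $\PR_\alpha$-martingale property of $M_t^{(\alpha)}$; the second claim is proved symmetrically. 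There is no genuine obstacle; the only delicate point is the handling of possible zeros of $Z^{\Gr_t}_\alpha$ along $\PR_1$-typical paths in the second part, which is precisely what the stopping time $T$ is designed to address.
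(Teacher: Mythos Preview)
Your argument for the first transform is correct and essentially the same as the paper's, just phrased in Radon--Nikodym language.

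For the second transform there is a genuine gap. First, you misread $T$: it is the first time $s$ at which some $\PR_1$-admissible next step \emph{could} lead to $Z^{\Gr_{s+1}}_\alpha=0$, not the first time such a step is actually taken. Under $\PR_\alpha$ the path never takes a bad step, but it can certainly reach vertices from which a bad step is $\PR_1$-available; hence $T$ is typically finite under $\PR_\alpha$, and the stopped and unstopped processes differ. Second, and more fundamentally, the abstract principle ``$N_t$ a $\PR_1$-martingale $\Rightarrow N_t/D_t$ a $\PR_\alpha$-martingale'' breaks down exactly when the density $D_t$ can hit zero under $\PR_1$. Your own one-step verification shows why: the $\PR_\alpha$-conditional expectation of $M^{(1)}_{t+1}Z^{\Gr_{t+1}}_1/Z^{\Gr_{t+1}}_\alpha$ reduces to $(Z^{\Gr_t}_1/Z^{\Gr_t}_\alpha)\sum_{v:\,Z^{\Gr_{t+1}(v)}_\alpha>0}\PR_1[\gamma(t+1)=v\mid\mathcal{F}_t]\,M^{(1)}_{t+1}(v)$, and this restricted sum does \emph{not} equal $M^{(1)}_t$ once some $\PR_1$-admissible $v$ has $Z^{\Gr_{t+1}(v)}_\alpha=0$. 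The paper handles this by splitting on $\{T\le t\}$ versus $\{T>t\}$: on $\{T>t\}$ every $\PR_1$-admissible next step keeps $Z_\alpha>0$, so the cancellation goes through and one uses that $M^{(1)}$ is a $\PR_1$-martingale; on $\{T\le t\}$ the process is frozen and the required identity reduces to the tower property for the density martingale $Z^{\Gr_\cdot}_\alpha/Z^{\Gr_\cdot}_1$. Stopping at $T$ is thus essential to the martingale property, not a cosmetic device for pathwise well-definedness.
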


\begin{proof}
Start from the first transform. The process $M_t^{(N)}$ is clearly adapted, and it is integrable due to the finiteness of the sample space, so it remains to check the conditional expectation property, i.e., $ \EX [ \mathbbm{1}\{ A \}  M_{t+1}^{(N)} ] =  \EX [ \mathbbm{1}\{ A \}  M_{t}^{(N)} ]$ for any $A \in \mathcal{F}_t$. 
Starting from the conditional expectation property of $M_t^{(\alpha)}$, and then expressing $\EX_\alpha$ as a conditional measure, $\EX_\alpha [ \; \cdot \; ] =  \EX_N [\; \cdot \; \mathbbm{1}\{ E_\alpha \} ] /\PR_N [ E_\alpha  ] $, we obtain, for any event $A \in \mathcal{F}_t$
\begin{align*}
\EX_\alpha [ M_t^{(\alpha)} \mathbbm{1}\{ A \} ] &= \EX_\alpha [ M_{t+1}^{(\alpha)} \mathbbm{1}\{ A \} ]  \\
\EX_N [ M_t^{(\alpha)}  \mathbbm{1}\{ A \} \EX_N [ \mathbbm{1} \{ E_\alpha \}  \; \vert \; \mathcal{F}_t ] ] &= \EX_N [ M_{t+1}^{(\alpha)} \mathbbm{1}\{ A \} \EX_N [ \mathbbm{1} \{ E_\alpha \} \; \vert \; \mathcal{F}_{t+1} ]  ].
\end{align*}
Substituting the conditional probabilities from Lemma~\ref{lem: Z-ratios are cond probas} now proves the first claim.

For the second transform, integrability and adaptedness are similarly clear. Let us prove the conditional expectation property. Notice that 
$
\EX_\alpha [ \; \cdot \; ] = \EX_1 [\; \cdot \; \mathbbm{1}\{ E_\alpha \} ] / \PR_1 [ E_\alpha  ] 
$
and using  Lemma~\ref{lem: Z-ratios are cond probas} compute, for an arbitrary event $A \in \mathcal{F}_t$
\begin{align}
\label{eq: test exp difference}
\EX_\alpha & [ M_t^{(\alpha)} \mathbbm{1}\{ A \} ] - \EX_\alpha [ M_{t+1}^{(\alpha)} \mathbbm{1}\{ A \} ] \\
\nonumber
&= \frac{1}{ \PR_1 [ E_\alpha  ] } \left(
\EX_1 \left[ M_t^{(\alpha)} \mathbbm{1}\{ A \}   Z^{ \Gr_{t } }_\alpha  / Z^{ \Gr_{t } }_1 \right]
-
\EX_1 \left[ M_{t+1}^{(\alpha)} \mathbbm{1}\{ A \} Z^{ \Gr_{t +1} }_\alpha  / Z^{ \Gr_{t + 1 } }_1 \right] 
\right).
\end{align}
Next, using the piecewise definition of $M_{t}^{(\alpha)} $, we obtain
\begin{align*}
\eqref{eq: test exp difference} 
=
& \frac{1}{ \PR_1 [ E_\alpha  ] } \Bigg(
\EX_1 \bigg[ \underbrace{ \mathbbm{1}\{ T \le t \}  M_{T}^{(1)} ( Z^{ \Gr_{T} }_1  / Z^{ \Gr_{ T} }_\alpha ) \mathbbm{1}\{ A \} }_{ := X_t, \quad \mathcal{F}_t \text{-measurable r.v.}} \big(  \underbrace{  Z^{ \Gr_{t } }_\alpha  / Z^{ \Gr_{t } }_1  -  Z^{ \Gr_{t +1} }_\alpha  / Z^{ \Gr_{t + 1 } }_1 }_{ \text{apply Lemma }  \ref{lem: Z-ratios are cond probas} } \big) \bigg] \\
&
\qquad \qquad
+
\EX_1 \bigg[ \underbrace{ \mathbbm{1}\{ T > t \} \mathbbm{1}\{ A \} }_{ := Y_t, \quad \mathcal{F}_t \text{-measurable r.v.}}   M_{t}^{(1)} ( Z^{ \Gr_{t} }_1  / Z^{ \Gr_{t} }_\alpha )    Z^{ \Gr_{t } }_\alpha  / Z^{ \Gr_{t } }_1  \bigg]
\\
&
\qquad \qquad
-
\EX_1 \bigg[ \underbrace{ \mathbbm{1}\{ T > t \} \mathbbm{1}\{ A \} }_{ = Y_t }   M_{t + 1}^{(1)} ( Z^{ \Gr_{t + 1} }_1  / Z^{ \Gr_{t + 1} }_\alpha )  Z^{ \Gr_{t +1} }_\alpha  / Z^{ \Gr_{t + 1 } }_1 \bigg] 
\Bigg) 
\\
=& 
\frac{1}{ \PR_1 [ E_\alpha  ] } \Bigg(
\underbrace{ \EX_1 \bigg[  X_t \EX_1 [ \mathbbm{1} \{ E_\alpha \}  \; \vert \; \mathcal{F}_t]  \bigg]  - \EX_1 \bigg[ X_t \EX_1 [ \mathbbm{1} \{ E_\alpha \}  \; \vert \; \mathcal{F}_{t+1}] }_{ = 0 \text{ by tower law of conditional expectation}} \bigg]
+
\underbrace{ \EX_1 \left[ Y_t M_{t}^{(1)}    \right]
-
\EX_1 \left[ Y_t  M_{t + 1}^{(1)}  \right] }_{= 0 \text{ since $M_{t}^{(1)} $ is a martingale}}
\Bigg) \\
= &
 0.
\end{align*}
This finishes the proof.
\end{proof}

\subsubsection{Discrete harmonic martingales}

We now establish the discrete harmonic martingales on which the scaling limit identification is based. The starting point is the well-known $\mathcal{F}_t$ martingales under the measure $\PR_1$, given by~\cite{LSW-LERW_and_UST, Zhan-scaling_limits_of_planar_LERW, CW-mLERW}
\begin{align}
\label{eq: classical LERW mgale}
M_t(v ) &= \frac{\PoissonK^{\Gr_t} (v, e_j^{(t)} )  }{ \ExcK^{\Gr_t } ( e_k, e_j^{(t)} ) }, 
\qquad
\text{where $v \in \Vert$ is any fixed vertex;}
\end{align}
here and hereafter we will need Poisson and excursion kernels on subgraphs $\Gr_t$ of $\Gr$, explicating the subgraph in the superscript.
Note that for fixed $t$, $M_t(v)$ is a discrete harmonic function of $v$ on $\Gr_t$ (see Section~\ref{subsec: basic notions of disc harm fcns} in Appendix~\ref{app: disc harm}).
Let us also define the notation
\begin{align*}
\tilde{Z}_\alpha^{\Gr_t} := \sum_{\beta \in \LP_N} \Mmat^{-1}_{\alpha, \beta}  \LPdet{\beta}{\ExcK^{\Gr_t}} (e_1 , \ldots , e_{j-1}, e_j^{(t)}, e_{j+1}, \ldots,  e_{2N}),
\end{align*}
i.e., $\tilde{Z}_\alpha^{\Gr_t} $ is obtained from ${Z}_\alpha^{\Gr_t} $ given by~\eqref{eq: soln of disc ptt fcns} by dividing out the weights of the even marked boundary edges of $\Gr_t$. Analogously, define $\tilde{Z}^{ \Gr_{t } }_N = \sum_{\beta \in \LP_N} \tilde{Z}^{ \Gr_{t } }_\beta$ and $\tilde{Z}^{ \Gr_{t } }_1$ as the case $N=1$ with marked edges $e_j^{(t)}, e_k$. 
The martingale transforms of Lemma~\ref{lem: discrete Girsanov transforms} and the martingale~\eqref{eq: classical LERW mgale} now allow us to find martingales under the various measures. We collect these below, directly with normalizing factors for which scaling limits exist. Note that the normalizing factors are Poisson kernels $\PoissonK = \PoissonK^{\Gr_1}$ at time $t=1$.

\begin{proposition}
\label{prop: discrete mgales collected}
Let $v, w \in \Vert$ be any vertices and $\beta \in \LP_N$ any link pattern. We have the following $\mathcal{F}_t$ martingales under the different measures:
\begin{align*}
M^{(1)}_t(v, w ) &= \frac{\PoissonK^{\Gr_t} (v, e_j^{(t)} )  }{  \tilde{Z}^{ \Gr_{t } }_1 } \PoissonK (w, e_k ) , \qquad \text{under } \PR_1; \\
M^{(\alpha)}_t(v, w ) &= \frac{\PoissonK^{\Gr_{t } } (v, e_j^{(t)} )  }{ \tilde{Z}^{ \Gr_{t } }_\alpha } \prod_{ \substack{ i = 1 \\ i \neq j} }^{2N} \PoissonK (w, e_i )  \qquad \text{stopped at $T$, under } \PR_\alpha; \\
M^{(N)}_t(v, w ) &= \frac{\PoissonK^{\Gr_{t } } (v, e_j^{(t)} )  }{ \tilde{Z}^{ \Gr_{t } }_N } \prod_{ \substack{ i = 1 \\ i \neq j} }^{2N} \PoissonK (w, e_i )  \qquad \text{stopped at $T$, under }  \PR_N; \\
\tilde{M}^{(N)}_t &= \frac{ \tilde{Z}^{ \Gr_{t} }_\beta  }{ \tilde{Z}^{ \Gr_{t } }_N } \qquad \text{under } \PR_N; \text{ and} \\
\tilde{M}^{(\alpha)}_t &= \frac{ \tilde{Z}^{ \Gr_{t} }_\beta  }{ \tilde{Z}^{ \Gr_{t } }_\alpha } \qquad \text{stopped at $T_N$, under } \PR_\alpha; \\
\end{align*}
here $T$ (resp. $T_N$) is the $\mathcal{F}_t$ stopping time given by the first time $s$ for which the next step $\gamma(s + 1)$ may be taken under $\PR_1$ (resp. $\PR_N$) so that $Z^{ \Gr_{s + 1} }_\alpha = 0$.
\end{proposition}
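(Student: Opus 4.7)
The plan is to bootstrap from the well-known classical martingale
$M_t(v) = \PoissonK^{\Gr_t}(v, e_j^{(t)}) / \ExcK^{\Gr_t}(e_k, e_j^{(t)})$
for a single UST branch under $\PR_1$ (from the LERW literature \cite{LSW-LERW_and_UST, Zhan-scaling_limits_of_planar_LERW, CW-mLERW}), and then to propagate this to the other measures using the two Girsanov transforms of Lemma~\ref{lem: discrete Girsanov transforms} together with the conditional-probability identities of Lemma~\ref{lem: Z-ratios are cond probas}. First, I would verify $M^{(1)}_t$: applying Theorem~\ref{thm: disc connectivity probas} in the case $N=1$ (where $\LP_1$ is a singleton and $\LPdet{\cdot}{\ExcK^{\Gr_t}}$ collapses to one excursion-kernel matrix element), one obtains $\tilde{Z}^{\Gr_t}_1 = \ExcK^{\Gr_t}(e_k, e_j^{(t)})$, so $M^{(1)}_t(v,w)$ is the classical martingale multiplied by the $t$-independent factor $\PoissonK(w,e_k)$ and hence a $\PR_1$-martingale.

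Next, for $M^{(\alpha)}_t$ I would apply the second Girsanov transform of Lemma~\ref{lem: discrete Girsanov transforms}, which gives that $M^{(1)}_{t \wedge T} \cdot Z^{\Gr_{t \wedge T}}_1 / Z^{\Gr_{t \wedge T}}_\alpha$ is a $\PR_\alpha$-martingale. Expanding via Theorem~\ref{thm: disc connectivity probas}, the weight ratio $Z^{\Gr_t}_1 / Z^{\Gr_t}_\alpha$ decouples into $\tilde{Z}^{\Gr_t}_1/\tilde{Z}^{\Gr_t}_\alpha$ times a ratio of edge-weight prefactors. A short case analysis on the parity of $j$ shows that any $t$-dependence in these prefactors (arising from $\EdgeWeight(e_j^{(t)})$ when $j$ is even) cancels between numerator and denominator, so the remaining weight ratio is $t$-independent; absorbing it and the $t$-independent factor $\prod_{i \neq j,k} \PoissonK(w,e_i)$ into the martingale preserves the $\PR_\alpha$-martingale property and yields exactly the claimed form. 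Then $M^{(N)}_t$ follows by applying the first Girsanov transform to $M^{(\alpha)}_t$: since $N$- and $\alpha$-partition functions carry the same even-edge weight prefactor, $Z^{\Gr_t}_\alpha/Z^{\Gr_t}_N = \tilde{Z}^{\Gr_t}_\alpha/\tilde{Z}^{\Gr_t}_N$, and the $\tilde{Z}^{\Gr_t}_\alpha$ in $M^{(\alpha)}_t$ cancels with the numerator to leave $\tilde{Z}^{\Gr_t}_N$ in the denominator, as required.

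For $\tilde{M}^{(N)}_t$, I would observe that $\tilde{Z}^{\Gr_t}_\beta / \tilde{Z}^{\Gr_t}_N = Z^{\Gr_t}_\beta / Z^{\Gr_t}_N$ (weight prefactors cancel), and by Lemma~\ref{lem: Z-ratios are cond probas} this is $\EX_N[\mathbbm{1}\{E_\beta\} \mid \mathcal{F}_t]$, a $\PR_N$-martingale by the tower property. For $\tilde{M}^{(\alpha)}_t$ the Girsanov transforms in Lemma~\ref{lem: discrete Girsanov transforms} do not directly cover a passage $\PR_N \to \PR_\alpha$, so I would first establish the obvious analogue: if $M^{(N)}_t$ is an $\mathcal{F}_t$-martingale under $\PR_N$, then $M^{(N)}_{t \wedge T_N} \cdot Z^{\Gr_{t \wedge T_N}}_N / Z^{\Gr_{t \wedge T_N}}_\alpha$ is an $\mathcal{F}_t$-martingale under $\PR_\alpha$. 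Its proof is word-for-word the proof of the second transform in Lemma~\ref{lem: discrete Girsanov transforms} with $1$ replaced by $N$ and $T$ by $T_N$, using Lemma~\ref{lem: Z-ratios are cond probas} for the corresponding conditional expectation. Applying this auxiliary transform to $\tilde{M}^{(N)}_t$ telescopes $(Z^{\Gr_t}_\beta/Z^{\Gr_t}_N) \cdot (Z^{\Gr_t}_N/Z^{\Gr_t}_\alpha) = \tilde{Z}^{\Gr_t}_\beta/\tilde{Z}^{\Gr_t}_\alpha = \tilde{M}^{(\alpha)}_t$.

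The only real bookkeeping obstacle is the systematic handling of the edge-weight prefactors that distinguish $Z$ from $\tilde{Z}$; the key observation making everything fit is that these prefactors either match across the ratios appearing in the Girsanov transforms or, when they do not, contribute only $t$-independent constants, which may be freely absorbed without destroying the martingale property. Once this is noted, the whole proposition assembles modularly from the single-branch LERW martingale, Lemma~\ref{lem: Z-ratios are cond probas}, and the (essentially one) Girsanov transform of Lemma~\ref{lem: discrete Girsanov transforms}.
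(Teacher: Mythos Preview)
Your proposal is correct and follows essentially the same route as the paper: bootstrap from the classical one-branch martingale via Theorem~\ref{thm: disc connectivity probas}, then chain the two Girsanov transforms of Lemma~\ref{lem: discrete Girsanov transforms} and the conditional-probability martingale of Lemma~\ref{lem: Z-ratios are cond probas}, noting (as the paper does parenthetically) that the $\PR_N \to \PR_\alpha$ transform is a direct analogue of the stated $\PR_1 \to \PR_\alpha$ one. Your explicit parity case analysis for the edge-weight prefactors is a welcome elaboration of what the paper compresses into the phrase ``up to constant scaling.''
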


\begin{proof}
For the first martingale, note that $\tilde{Z}^{ \Gr_{t } }_1 $ is by Theorem~\ref{thm: disc connectivity probas} a constant scaling of $\ExcK^{\Gr_t } ( e_k, e_j^{(t)} ) $. $M^{(1)}_t(v, w )$ is thus a constant scaling of the martingale~\eqref{eq: classical LERW mgale}. The second one is, up to constant scaling, obtained by applying the second martingale transform of Lemma~\ref{lem: discrete Girsanov transforms} to the first martingale.
The third martingale $M^{(N)}_t(v, w )$ is obtained by the first transform of Lemma~\ref{lem: discrete Girsanov transforms} from the second martingale $M^{(\alpha)}_t(v, w )$.
The fourth one is the conditional probability martingale of Lemma~ \ref{lem: Z-ratios are cond probas}.
The fifth martingale $\tilde{M}^{(\alpha)}_t$ is obtained by the second martingale transform of Lemma~\ref{lem: discrete Girsanov transforms} from the fourth one. (Lemma~\ref{lem: discrete Girsanov transforms} is stated as transforming $\PR_1$ martingales to $\PR_\alpha$ but its direct analogue applies from $\PR_N$  to $\PR_\alpha$.)
\end{proof}

\bigskip 

\section{Observable convergence results}
\label{sec: obs conv}

\subsection{Convergence of discrete harmonic objects}

\subsubsection{The continuous Green's function, Poisson kernel, and excursion kernel}
\label{subsubsec: cts kernels}

The Green's function $\GreenKH (z, w)$ of the negative Laplacian $(- \Delta)$ on the upper half-plane $z, w \in \bH$ is given by
\begin{align*}
\GreenKH (z, w) = - \frac{1}{2 \pi} [\log \vert z - w \vert - \log \vert z - w^* \vert].
\end{align*}
The \emph{Poisson kernel} $\PoissonKH (z, x)$ of the $z \in \bH$ at $x \in \R$ and is given by
\begin{align}
\label{eq: defn of cont poisson kernel}
\PoissonKH (z, x) = - \frac{1}{\pi} \Im (\frac{1}{z-x}) = \frac{1}{\pi} \frac{\Im (z)}{\vert z-x \vert^2},
\end{align}
and the \emph{excursion kernel} $\ExcKH(x, y)$ between $x, y \in \R$, $x \ne y$ is given by
\begin{equation*}
\ExcKH(x, y) = \frac{1}{\pi} \frac{1}{(x-y)^2}.
\end{equation*}

Notice that the normal derivatives of $\GreenKH$ can be defined on $\R$ by Schwarz reflection, and then
\begin{align*}
\PoissonKH (z, x) = \left( \partial_y \GreenKH (z, x + yi) \right)_{y=0},
\end{align*}
and in a similar sense
\begin{align*}
\ExcKH(x, x') = \left( \partial_y \PoissonKH (x + yi, x') \right)_{y=0}.
\end{align*}
With this observation, $\GreenKH$, $\PoissonKH$, and $\ExcKH$ are the continuum analogues (in $\bH$) of the discrete objects $\GreenK$, $\PoissonK$, and $\ExcK$, respectively, introduced in Section~\ref{subsubsec: discrete kernels}.

When combining Poisson kernels with Loewner evolutions, we will need the following elementary observation. Let $H_t \subset \bH$ be the complement of a hull in $\bH$ and $g_t: H_t \to \bH$ its mapping-out function. Let $x \in \R$ be such that a neighbourhood $N$ of $x$ in $\bH$ is contained in $H_t$. As observed above, $\GreenKH (z, w)$ defines a harmonic function of $w$ on $\bH \setminus \{ z \}$ (anf thus on $H_t \setminus \{ z \}$) with boundary normal derivative $\PoissonKH (z, x)$ at $x$. Denote $z_t = g_t (z)$, $w_t = g_t (w)$, and $x_t = g_t (x)$. By conformal invariance of harmonic functions, $\GreenKH (z, w) = \GreenKH (g_t^{-1} (z_t), g_t^{-1} (w_t))$ is hence a harmonic function of $w_t$ on $\bH \setminus \{ z_t \}$. Its boundary normal derivative at $x_t$ given by
\begin{align*}
(g_t^{-1} )'(x_t) \PoissonKH (z, x) = \frac{ 1 }{ g_t'(x) } \PoissonKH (z, x).
\end{align*}

\subsubsection{Convergence results on isoradial graphs}

We now state four results, guaranteeing the convergence of suitable ratios of discrete Green's functions, Poisson kernels, and excursion kernels to their continuous counterparts. The two first results are ``classics'' of discrete harmonic analysis, see~\cite{CS-discrete_complex_analysis_on_isoradial}, while the two latter ones follow by combining the first ones with Theorem~\ref{thm: ratios of discrete harmonic functions converge} of Appendix~\ref{app: disc harm}. Theorem~\ref{thm: ratios of discrete harmonic functions converge} can be proven based on a uniform estimate on the behaviour discrete harmonic functions near a boundary segment with zero boundary conditions, given recently by Chelkak and Wan~\cite[Corollary~3.8]{CW-mLERW}.
We provide in Appendix~\ref{app: disc harm} a different proof based on conformal crossing estimates for the random walk~\cite{KS}, that was found independently by the author.

The convergence results consider the following setup. Let $\Gr^{(n)} = (\Vert^{(n)}, \Edg^{(n)})$ be simply-connected subgraphs of the isoradial lattices $\InfiniteGr^{(n)}$ with mesh sizes $\delta_n \to 0$, as defined in Section~\ref{subsec: isoradial graphs}. Denote the Green, Poisson, and excursion kernels on $\Gr^{(n)}$ by $\GreenK^{(n)}$, $\PoissonK^{(n)}$, and $\ExcK^{(n)}$, respectively.
 Let $v^{(n)}, w^{(n)} \in \Vert^{(n)}$ be interior vertices and $e_1^{(n)}, e_2^{(n)} \in \bdry \Edg^{(n)}$ be distinct boundary edges, both connected to $v^{(n)}$ by a path on the interior vertices. Assume that $(\Gr^{(n)}; v^{(n)}, w^{(n)}; e_1^{(n)}, e_2^{(n)}) \to (\domain; v, w; p_1, p_2)$ in the Carath\'{e}odory sense, where the limit is a simply-connected domain with two marked interior points and two distinct marked prime ends. Let $\confmap$ be a conformal map $\domain \to \bH$. Note that the scaling limits in the following theorem are conformally invariant, in the sense that they do not depend on the precise choice of this conformal map.

\begin{thm}
\label{prop: disc harm conv results}
In the setup and notation given above, we have the following convergences as $n \to \infty$.
\begin{itemize}
\item[i)] 
\cite[Corollary~3.11]{CS-discrete_complex_analysis_on_isoradial}
The discrete Green's functions $\GreenK^{(n)} (\cdot, v^{(n)})$ tend to the continuous one $\GreenKH ( \confmap (\cdot), \confmap (v) ) $ uniformly over compact subsets of $\domain \setminus \{ v \}$, in the following precise sense:
given $r > 0$, there exist $\eps(n) = \eps(n, r) $ with $\eps(n) \to 0 $ as $n \to \infty$, such that for all vertices $u \in \Vert^{(n)}$ lying inside the limiting domain $\domain$ with $d(u, v), d (u, \bdry \domain )\geq r$, we have
\begin{align*}
\big\vert  \GreenK^{(n)} (u, v^{(n)}) - \GreenKH ( \confmap (u), \confmap (v) ) \big\vert \leq \eps(n).
\end{align*}
\item[ii)] 
\cite[Theorem~3.13]{CS-discrete_complex_analysis_on_isoradial} Ratios of discrete Poisson kernels $\PoissonK^{(n)} (\cdot, e^{(n)}_1) / \PoissonK^{(n)}  (v^{(n)}, e^{(n)}_1)$ tend to the continuous ones $\PoissonKH (\confmap(\cdot), \confmap( p_1 )) / \PoissonKH ( \confmap(v) , \confmap(p_1 ))$ uniformly over compact subsets of $\domain $, in the following precise sense:
given $r > 0$, there exist $\eps(n) = \eps(n, r) $ with $\eps(n) \to 0 $ as $n \to \infty$, such that for all vertices $u \in \Vert^{(n)}$ lying inside the limiting domain $\domain$ with $ d (u, \bdry \domain )\geq r$, we have
\begin{align*}
\left\vert  \frac{\PoissonK^{(n)} (u, e^{(n)}_1) }{ \PoissonK^{(n)}  (v^{(n)}, e^{(n)}_1) }
- \frac{ \PoissonKH (\confmap(u ), \confmap( p_1 )) }{ \PoissonKH (\confmap(v), \confmap( p_1 )) }  \right\vert
 \leq \eps(n);
\end{align*}
here we assume that $\confmap$ is chosen so that $\confmap (p_1) \neq \infty$.
\item[iii)] \emph{Convergence of excursion kernel--Poisson kernel ratios (also in \cite[Proposition~3.14]{CW-mLERW}):} we have
\begin{align*}
 \frac{ \ExcK^{(n)} (e^{(n)}_1, e^{(n)}_2) }{ \PoissonK^{(n)} (v^{(n)}, e^{(n)}_1) \PoissonK^{(n)} (w^{(n)}, e^{(n)}_2) } \longrightarrow \frac{ \ExcKH ( \confmap (p_1), \confmap (p_2) ) }{ \PoissonKH ( \confmap (v), \confmap (p_1) ) \PoissonKH ( \confmap (w), \confmap (p_2) ) }, \qquad \text{as } n \to \infty
\end{align*}
where we assume that $\confmap (p_1), \confmap (p_2) \neq \infty$.
\item[iv)] \emph{Convergence of ratios of Poisson kernels in different domains:} 
Let $\tilde{\Gr}^{(n)} \subset \Gr^{(n)}$ be simply-connected subgraphs of $\InfiniteGr^{(n)}$, such that also $(\tilde{\Gr}^{(n)}; v^{(n)}, w^{(n)}; e_1^{(n)} )$ satisfy the assumptions of this proposition, with the limiting domain $(\tilde{\domain}; v, w; p_1)$. Suppose furthermore that $\bH \setminus \confmap ( \tilde{\domain}  )$ is a hull and bounded away from $\confmap (w)$ and $\confmap (p_1)$. Then, we have
\begin{align*}
\frac{ \PoissonK^{ \Gr^{(n)} } (v^{(n)}, e_1^{(n)} )}{ \PoissonK^{ \tilde{\Gr}^{(n)} } (w^{(n)}, e_1^{(n)})} \longrightarrow \frac{ \frac{ 1 }{ g'( \confmap (p_1) ) } \PoissonKH ( \confmap (v) , \confmap (p_1) ) }{ \PoissonKH ( g ( \confmap (w) ), g ( \confmap (p_1) ) ) }  \qquad \text{as } n \to \infty,
\end{align*}
where $g$ is a conformal mapping-out function $\confmap ( \tilde{\domain} ) \to \bH$ and we assume $\confmap (p_1) \neq \infty$.
\end{itemize}
\end{thm}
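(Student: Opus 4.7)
Parts (i) and (ii) are cited directly from~\cite{CS-discrete_complex_analysis_on_isoradial} and require no new proof. The plan for (iii) and (iv) is to combine these with Theorem~\ref{thm: ratios of discrete harmonic functions converge} of Appendix~\ref{app: disc harm}, which upgrades uniform bulk convergence of a ratio of discrete harmonic functions (both vanishing on a common boundary segment) into convergence of the corresponding ``boundary normal derivatives,'' normalized against a reference discrete Poisson kernel whose continuum limit carries a known boundary normal derivative.

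For (iii), I would exploit the identity $\ExcK^{(n)}(e_1^{(n)}, e_2^{(n)}) = \PoissonK^{(n)}(v_1^{(n)}, e_2^{(n)})$, where $v_1^{(n)}$ is the interior endpoint of $e_1^{(n)}$; this is the discrete counterpart of $\ExcKH(x_1, x_2) = \partial_\nu|_{x_1} \PoissonKH(\cdot, x_2)$. Viewing $F_n(u) := \PoissonK^{(n)}(u, e_2^{(n)}) / \PoissonK^{(n)}(w^{(n)}, e_2^{(n)})$ as a discrete harmonic function of $u$, part (ii) provides uniform bulk convergence to the continuum ratio $\PoissonKH(\confmap(\cdot), \confmap(p_2)) / \PoissonKH(\confmap(w), \confmap(p_2))$, which vanishes on $\R$ near $\confmap(p_1)$ and has normal derivative $\ExcKH(\confmap(p_1), \confmap(p_2)) / \PoissonKH(\confmap(w), \confmap(p_2))$ there. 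Choosing the reference Poisson kernel in Theorem~\ref{thm: ratios of discrete harmonic functions converge} to be $\PoissonK^{(n)}(\cdot, e_1^{(n)}) / \PoissonK^{(n)}(v^{(n)}, e_1^{(n)})$, which by (ii) has unit continuum normal derivative at $\confmap(p_1)$, then converts the bulk convergence of $F_n$ into convergence of the discrete ``boundary derivative'' $F_n(v_1^{(n)}) / \PoissonK^{(n)}(v^{(n)}, e_1^{(n)})$ to the stated limit.

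For (iv), the plan is to fix a reference interior point $u_0 \in \tilde\domain$ (away from $w$ and $p_1$) and factor the ratio as
\begin{align*}
\frac{\PoissonK^{\Gr^{(n)}}(v^{(n)}, e_1^{(n)})}{\PoissonK^{\tilde\Gr^{(n)}}(w^{(n)}, e_1^{(n)})}
=
\frac{\PoissonK^{\Gr^{(n)}}(v^{(n)}, e_1^{(n)})}{\PoissonK^{\Gr^{(n)}}(u_0^{(n)}, e_1^{(n)})}
\cdot
\frac{\PoissonK^{\Gr^{(n)}}(u_0^{(n)}, e_1^{(n)})}{\PoissonK^{\tilde\Gr^{(n)}}(u_0^{(n)}, e_1^{(n)})}
\cdot
\frac{\PoissonK^{\tilde\Gr^{(n)}}(u_0^{(n)}, e_1^{(n)})}{\PoissonK^{\tilde\Gr^{(n)}}(w^{(n)}, e_1^{(n)})}.
\end{align*}
The first and third factors converge by (ii) applied in $\domain$ and $\tilde\domain$, respectively. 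For the middle factor, the functions $u \mapsto \PoissonK^{\Gr^{(n)}}(u, e_1^{(n)})$ and $u \mapsto \PoissonK^{\tilde\Gr^{(n)}}(u, e_1^{(n)})$ are both discrete harmonic on $\tilde\Gr^{(n)}$ with zero boundary values on $\partial\tilde\Gr^{(n)} \cap \partial\Gr^{(n)}$ away from $e_1^{(n)}$, so Theorem~\ref{thm: ratios of discrete harmonic functions converge} together with (ii), applied to the subgraph $\tilde\Gr^{(n)}$, identifies the limit of their ratio at $u_0$. Transporting this continuum limit from $\confmap(\tilde\domain)$ to $\bH$ via the mapping-out function $g$, and using the transformation law $\PoissonK_D(\cdot, p) = g'(p)\, \PoissonKH(g(\cdot), g(p))$ for the continuum Poisson kernel, produces the factor $1/g'(\confmap(p_1))$; elementary algebra assembles the three limits into the stated expression.

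The main obstacle throughout is matching the mesh-scale normalizations of the discrete Poisson and excursion kernels against the boundary normalization appearing in Theorem~\ref{thm: ratios of discrete harmonic functions converge}; this is precisely why every assertion is phrased in terms of dimensionless ratios, and why the reference function in each application must itself be a discrete Poisson kernel whose continuum limit is explicitly known from (ii).
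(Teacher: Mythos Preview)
Your plan for (iii) has a genuine gap. The reference function you choose, $g_n(u) = \PoissonK^{(n)}(u, e_1^{(n)})/\PoissonK^{(n)}(v^{(n)}, e_1^{(n)})$, is by definition $\GreenK^{(n)}(u, v_1^{(n)})/\PoissonK^{(n)}(v^{(n)}, e_1^{(n)})$, and this fails to be discrete harmonic precisely at $u = v_1^{(n)}$ --- the very evaluation point $a_n$ in Theorem~\ref{thm: ratios of discrete harmonic functions converge}. Concretely $g_n(a_n) = \GreenK^{(n)}(v_1^{(n)}, v_1^{(n)})/\PoissonK^{(n)}(v^{(n)}, e_1^{(n)})$, so the diagonal Green's function value appears and does not cancel; your claimed output $F_n(v_1^{(n)})/\PoissonK^{(n)}(v^{(n)}, e_1^{(n)})$ is not what the theorem would produce from this choice. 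The paper avoids this by taking the reference to be $g_n(u) = \GreenK^{(n)}(v^{(n)}, u)$, which converges uniformly by part~(i), is harmonic at $a_n$ (its only non-harmonic point is the interior vertex $v^{(n)}$, far from $p_1$), and satisfies $g_n(a_n) = \PoissonK^{(n)}(v^{(n)}, e_1^{(n)})$ on the nose. Then $F_n(a_n)/g_n(a_n)$ is literally the quantity in (iii).

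For (iv), your three-factor decomposition is more circuitous than needed, and the argument for the middle factor is misphrased: you parametrize by the interior variable $u$ and speak of ``their ratio at $u_0$,'' but Theorem~\ref{thm: ratios of discrete harmonic functions converge} concerns evaluation at a sequence $a_n$ tending to a boundary prime end, and the raw functions $u\mapsto\PoissonK^{(n)}(u,e_1^{(n)})$ do not converge uniformly (only their ratios in the first argument do, by (ii)). If instead you let the \emph{second} argument tend to $e_1^{(n)}$, the middle factor becomes a ratio of Green's functions in the two domains, and a single application of Theorem~\ref{thm: ratios of discrete harmonic functions converge} handles it. That is exactly the paper's one-step proof: apply the theorem to $\GreenK^{\Gr^{(n)}}(v^{(n)}, u)/\GreenK^{\tilde\Gr^{(n)}}(w^{(n)}, u)$, both convergent by (i), and then read off the factor $1/g'(\confmap(p_1))$ from the conformal covariance of boundary normal derivatives of the Green's function noted at the end of Section~\ref{subsubsec: cts kernels}.
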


The proof of Theorem~\ref{prop: disc harm conv results} is given in Section~\ref{subsec: proof of disc harm conv res} in Appendix~\ref{app: disc harm}.

\subsection{SLE partition functions and convergence of WST connectivity probabilities}

\subsubsection{Excursion kernel determinants and partition functions}
\label{subsubsec: cont ExcKDet}

In analogy to the discrete excursion kernel determinants, as defined in Section~\ref{subsubsec: disc ExcKDet}, we define their continuous counterparts.
Let $x_1 < \ldots < x_{2N} $ be real numbers, and let $((a_1, b_1), \ldots, (a_N, b_N))$ be the left-to-right orientation of a link pattern $\alpha \in \LP_N$. We define the \emph{continuous excursion kernel determinant} $\LPdet{\alpha}{\ExcKH} (x_1 , \ldots , x_{2N})$ of $\alpha$ by
\begin{align*}
\LPdet{\alpha}{\ExcKH} (x_1 , \ldots , x_{2N})
    := \det \Big( \ExcKH(x_{a_k}, x_{b_\ell}) \Big)_{k,\ell=1}^N .
\end{align*}
and connectivity partition functions
\begin{align}
\label{eq: cont ptt fcns - conditional}
\PartF_\alpha (x_1 , \ldots , x_{2N}) := \sum_{\beta \in \LP_N} \Mmat^{-1}_{\alpha, \beta}  \LPdet{\beta}{\ExcKH} (x_1 , \ldots , x_{2N}),
\end{align}
where $\Mmat^{-1}_{\alpha, \beta}$ is as in Theorem~\ref{thm: disc connectivity probas}. Finally, we also define
\begin{align}
\label{eq: cont ptt fcns - unconditional}
\PartF_N (x_1 , \ldots , x_{2N}) := \sum_{\alpha \in \LP_N} \PartF_\alpha (x_1 , \ldots , x_{2N}) .
\end{align}
An alternative expression for $\PartF_N$ is given in \cite[Lemma~4.12]{PW}.

\subsubsection{Proof of Theorem~\ref{thm: scaling limit WST connectivity probas}}
Denote by $\PR^{(n)}$ the WST measure on $\Gr^{(n)}$, let $E^{(n)}_N$ and $E^{(n)}_\alpha$ be the WST connectivity events on $(\Gr^{(n)}; e_1^{(n)}, \ldots, e_{2N}^{(n)})$ defined in Section~\ref{subsubsec: soln of connectivity probas}, and $\PR^{(n)}_N [ \; \cdot \;] = \PR^{(n)} [ \; \cdot \; \vert E^{(n)}_N]$. In this notation, we wish to show that 
\begin{align*}
\PR^{(n)}_N [ E^{(n)}_\alpha ] \longrightarrow 
\frac{\PartF_\alpha (X^{(1)}_0  \ldots  X^{(2N)}_0) }{ \PartF_N (X^{(1)}_0  \ldots  X^{(2N)}_0)}
\qquad
\text{as } n \to \infty.
\end{align*}

From the inclusion of events $E^{(n)}_\alpha \subset E^{(n)}_N$, we have 
\begin{align}
\nonumber
\PR^{(n)}_N [ E_\alpha^{(n)} ] &= \frac{ \PR^{(n)} [ E_\alpha^{(n)} ] }{ \PR^{(n)} [ E_N^{(n)} ] } \\
\label{eq: disc conn probas}
\text{(Theorem~\ref{thm: disc connectivity probas})} &= \frac{ \sum_{\beta \in \LP_N} \Mmat^{-1}_{\alpha, \beta}  \LPdet{\beta}{\ExcK^{(n)}} (e^{(n)}_1 , \ldots , e^{(n)}_{2N}) }{ \sum_{\gamma \in \LP_N} \sum_{\beta \in \LP_N} \Mmat^{-1}_{\gamma, \beta}  \LPdet{\beta}{\ExcK^{(n)}} (e^{(n)}_1 , \ldots , e^{(n)}_{2N}) }.
\end{align}
Note that each term in the determinants $\LPdet{\beta}{\ExcK^{(n)}}$ above is a product of $N$ excursion kernels $\ExcK^{(n)}(\cdot, \cdot)$, and in such a term, each of the $2N$ marked boundary edges appears as an excursion kernel argument exactly once.  Divide both sides of the fraction above by $\prod_{i=1}^{2N} \PoissonK^{(n)} (v^{(n)}, e^{(n)}_i) $, where $v^{(n)} \in \Vert_n$ is the vertex of $\Gr^{(n)}$ closest to a fixed but arbitrary reference point $v $ in the limiting domain $ \domain$. Then, by Theorem~\ref{prop: disc harm conv results}(iii), studying either the numerator or denominator of~\eqref{eq: disc conn probas} above (but not yet their ratio), in the scaling limit $n \to \infty$ we can replace the discrete Poisson and excursion kernels by their continuous counterparts, making a small error $o(1)$. That is, for instance for the numerator, we compute
\begin{align}
\nonumber
\prod_{i=1}^{2N} & \left( \frac{1}{ \PoissonK^{(n)} (v^{(n)}, e^{(n)}_i)} \right) \sum_{\beta \in \LP_N} \Mmat^{-1}_{\alpha, \beta}  \LPdet{\beta}{\ExcK^{(n)}} (e^{(n)}_1 , \ldots , e^{(n)}_{2N}) \\
\nonumber
&= \prod_{i=1}^{2N} \left( \frac{1}{ \PoissonKH (\confmap (v ), \confmap ( p_i ) )} \right) \sum_{\beta \in \LP_N} \Mmat^{-1}_{\alpha, \beta}  \LPdet{\beta}{\ExcKH} ( \confmap ( p_1 ) , \ldots , \confmap ( p_{2N} ) )  + o(1) \\
\label{eq: scaling limit PartF}
&= \prod_{i=1}^{2N} \left( \frac{1}{ \PoissonKH (\confmap (v ),  X^{(i)}_0 )} \right)
\PartF_\alpha (X^{(1)}_0  \ldots  X^{(2N)}_0)  + o(1),
\end{align}
where the last step used the definitions~\eqref{eq: cont ptt fcns - conditional} and $ X^{(i)}_0 = \confmap (p_i)  $. 
Furthermore, note that by~\cite[Theorem~4.1]{KKP}, we have $\PartF_\alpha > 0$, so the error $o(1)$ in~\eqref{eq: scaling limit PartF} is small also \textit{relative} to the first term. A similar deduction holds for the denominator of~\eqref{eq: disc conn probas}. Due to small \textit{relative} errors, we can also study the ratio~\eqref{eq: disc conn probas}, and Theorem~\ref{thm: scaling limit WST connectivity probas} follows.
{ \ }$\hfill \qed$


\bigskip

\section{Proof of the main theorem}
\label{sec: proof of main thm}

The proof of Theorem~\ref{thm: main result} consists of showing precompactness, i.e., the existence of subsequential weak limits, and identification of any subsequential limit. The precompactness part was done for one curve in~\cite{KS} (see also~\cite{mie}), and for multiple curves in~\cite{mie2}. We briefly review the key part of the argument in Section~\ref{subsubsec: precompactness} in Appendix~\ref{app: disc harm}. This section provides the proof of the identification part.

For notation, denote by $\PR^{(n)}_\star$ the WST measure on $\Gr^{(n)}$, conditional on the event $E^{(n)}_\star$ between the edges $e^{(n)}_1, \ldots, e^{(n)}_{2N}$, where $\star \in \{ \alpha, N, 1 \}$. (The limit identification will be identical for $\star \in \{ \alpha, N, 1 \}$.) By the precompactness, we may extract a subsequence such that the stopped driving functions $W^{(n)}$ converge weakly to a limiting random function $W$ described by the weak limit measure $\limPR_\star$ (a Borel measure on the space of continuous functions). We will suppress the subsequence notation and assume that $W^{(n)}$ converge weakly.

\subsection{Continuous martingales in the scaling limit}
\label{subsec: cts mgales}

The first step in the identification part of Theorem~\ref{thm: main result} is to promote the discrete martingales of Proposition~\ref{prop: discrete mgales collected} to continuous martingales in the weak limit. This is formulated in Proposition~\ref{prop: cts martingale} below, and the rest of this subsection constitutes the proof of that proposition.
 
To state Proposition~\ref{prop: cts martingale}, notice that the derivative $g_t '$ of a Loewner mapping-out function (see~\eqref{eq: Loewner ODE}) evolves as
\begin{align}
\nonumber
g_0 ' (z) & = 1 \\
\label{eq: LE for derivative}
\partial_t g_t ' (z) & = - \frac{ 2 g_t ' (z) }{ (g_t (z) - W_t )^2}.
\end{align}
Up to the stopping time $\tau$, the functions $g_t(\cdot)$ and their derivatives $g_t'(\cdot)$ are well defined by Schwarz reflection also at the marked boundary points $X^{(i)}_0 \in \R$, $i \neq j$. Their evolution is governed by the same differential equation~\eqref{eq: LE for derivative}, with $z= X^{(i)}_0$. Recall also the definitions of the neighbourhood $U_\epsilon$ and the filtration $\mathscr{F}_t$ from Section~\ref{subsec: LE and SLE}.

\begin{proposition}
\label{prop: cts martingale}
For all $z \in \bH \setminus U_\epsilon$ and $\omega \in \bH$, the process
\begin{align}
\label{eq: cts mgale}
\Mart^{(\star)}_t( z , \omega) &= \frac{ \PoissonKH ( g_t(z) , W_t )  }{ \PartF_\star (X^{(1)}_t,  \ldots,  X^{(2N)}_t) } \prod_{ \substack{ i = 1 \\ i \neq j} }^{2N} \frac{\PoissonKH ( \omega , X^{(i)}_0)}{ g_t' (X^{(i)}_0)}, \qquad \text{stopped at } \tau
\end{align}
is a continuous bounded $\mathscr{F}_t$ martingale under $\limPR_\star$.
\end{proposition}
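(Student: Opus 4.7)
I plan to prove Proposition~\ref{prop: cts martingale} by realizing $\Mart^{(\star)}_t(z, \omega)$ as the scaling limit of the discrete martingale $M^{(\star)}_t(v^{(n)}, w^{(n)})$ from Proposition~\ref{prop: discrete mgales collected}, and transferring the martingale property by dominated convergence. Concretely, I choose interior vertices $v^{(n)}, w^{(n)} \in \Vert^{(n)}$ converging to $\confmap^{-1}(z)$ and $\confmap^{-1}(\omega)$, respectively, and time-reparametrize the discrete martingale by the half-plane capacity of the branch's hulls so that it becomes a c\`adl\`ag process indexed by Loewner time. The Skorokhod representation applied to the weak convergence $W^{(n)} \Rightarrow W$ then lets me work almost surely.

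The first key step is pointwise convergence: for each Loewner time $t < \tau$, one should have $M^{(\star)}_{t_n \wedge \tau^{(n)}}(v^{(n)}, w^{(n)}) \to \Mart^{(\star)}_t(z, \omega)$ on an almost-sure event. The numerator factor $\PoissonK^{\Gr^{(n)}_{t_n}}(v^{(n)}, e_j^{(n,t_n)})$ is handled by Theorem~\ref{prop: disc harm conv results}(iv) applied with the cut subdomain $\Gr^{(n)}_{t_n}$, producing the quantity $\PoissonKH(g_t(z), W_t)/g_t'(X_0^{(j)})$ in the limit. The denominator $\tilde{Z}^{\Gr^{(n)}_{t_n}}_\star$ expands via Theorem~\ref{thm: disc connectivity probas} into a polynomial in discrete excursion kernels on $\Gr^{(n)}_{t_n}$; normalizing each such kernel by the product of two Poisson kernels and applying Theorem~\ref{prop: disc harm conv results}(iii)--(iv), as in the proof of Theorem~\ref{thm: scaling limit WST connectivity probas}, each ratio converges to the continuous excursion kernel in the image domain $g_t(\confmap(\domain))$, evaluated at $X_t^{(i)} = g_t(X_0^{(i)})$. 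Conformal covariance of excursion and Poisson kernels under $g_t$ accounts for the Jacobian factors $g_t'(X_0^{(i)})$, $i \neq j$, appearing in $\Mart^{(\star)}_t$, while the $t$-independent factor $\prod_{i \neq j} \PoissonK^{(n)}(w^{(n)}, e_i^{(n)})$ in $M^{(\star)}$ matches $\prod_{i \neq j} \PoissonKH(\omega, X_0^{(i)})$ in the limit.

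The second key step is uniform boundedness of $M^{(\star)}_{t \wedge \tau^{(n)}}(v^{(n)}, w^{(n)})$ in $n$ and $t$. The numerator Poisson kernel is bounded via harmonic-measure estimates because $z \in \bH \setminus U_\epsilon$ guarantees a uniform positive distance from the stopped hull (by the defining property of the continuous exit-time modification). The denominator $\tilde{Z}^{\Gr^{(n)}_{t_n}}_\star$ must be bounded \emph{away from zero}: this should follow from the continuity and positivity of $\PartF_\star$ (cf.\ \cite[Theorem~4.1]{KKP}), the fact that $W_t$ stays at positive distance from the other $X_t^{(i)}$ up to $\tau$, and a uniform version of the pointwise convergence above. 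The technical details of this uniform boundedness constitute the core of the estimate and are deferred to Appendix~\ref{app: boundedness pf}.

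With pointwise convergence and a uniform bound in hand, the discrete martingale identity
\begin{align*}
\EX^{(n)}_\star\bigl[\Phi(W^{(n)}) \, M^{(\star)}_{t_n \wedge \tau^{(n)}}(v^{(n)}, w^{(n)})\bigr] = \EX^{(n)}_\star\bigl[\Phi(W^{(n)}) \, M^{(\star)}_{s_n \wedge \tau^{(n)}}(v^{(n)}, w^{(n)})\bigr], \qquad s < t,
\end{align*}
for bounded continuous $\mathscr{F}_s$-measurable test functionals $\Phi$ of the driving function, passes to the limit by dominated convergence, yielding the $\mathscr{F}_t$-martingale property of $\Mart^{(\star)}_{\cdot \wedge \tau}$ under $\limPR_\star$. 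Adaptedness, continuity, and boundedness of the limiting process are immediate from formula~\eqref{eq: cts mgale} together with the smoothness of the Loewner flow up to $\tau$. The main obstacle is precisely the uniform boundedness step, and especially the lower bound on the discrete partition function, which requires quantitative control of how close the discrete branch may come to the marked boundary edges, uniformly in the mesh.
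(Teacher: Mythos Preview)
Your overall strategy---pass discrete martingale identities to the limit via pointwise convergence plus a dominating bound---is sound and would work, but it differs from the route the paper actually takes, and the difference matters for where the technical weight lies.

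The paper does \emph{not} invoke Skorokhod representation or bound the discrete martingales directly. Instead it observes that $\Mart^{(\star)}_t(z,\omega)$ is (up to a vanishing perturbation coming from $X^{(n;i)}_0 \to X^{(i)}_0$) a single continuous bounded functional $h_{(t,z,\omega)}$ of the driving function, applied to $W$ or to $W^{(n)}$. Appendix~\ref{app: boundedness pf} proves boundedness of this functional for an \emph{arbitrary} continuous driving function---not just the weak limit---using harmonic-measure arguments to lower-bound $\Im g_t(z)$, to trap the coordinates $(X^{(1)}_t,\ldots,X^{(2N)}_t)$ in a compact set (whence $\PartF_\star$ is bounded below by its positive minimum there), and Gr\"onwall to lower-bound $g_t'(X^{(i)}_0)$. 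Weak convergence of $W^{(n)}$ then gives $\EX^{(n)}_\star[h(W^{(n)})f_t(W^{(n)})] \to \limEX_\star[h(W)f_t(W)]$ directly. A separate uniform-approximation result (Proposition~\ref{prop: unif conv of mgale obs}) shows $|M^{(n;\star)}_{\lceil t\rceil^{(n)}}(v,w) - \Mart^{(n;\star)}_t(\confmap_n(v),\confmap_n(w))| \le \eps(n)$ uniformly over all realizations of $W^{(n)}$ and all $t\le\tau^{(n)}$, proven by a contradiction-plus-compactness extraction of a Carath\'eodory-convergent subsequence of the slit graphs. The uniformity lets the $o(1)$ errors be pulled out of expectations without any a.s.\ coupling.

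Two practical points for your version. First, your worry about lower-bounding the \emph{discrete} partition function $\tilde Z^{\Gr^{(n)}_t}_\star$ directly is a harder problem than necessary: the paper sidesteps it entirely by bounding $\PartF_\star$ on the continuous side and then invoking the uniform approximation. Second, your description of the numerator limit via Theorem~\ref{prop: disc harm conv results}(iv) is slightly off, since $e_j^{(t)}$ is the moving tip rather than a fixed boundary edge; the paper handles this by first rewriting $M^{(\star)}_s(v,w)$ as in~\eqref{eq: disc mg obs expanded} so that only the ratio $\prod_i \PoissonK^{\Gr_s}(v,e_i^{(s)})/\tilde Z^{\Gr_s}_\star$ sees the tip, and this ratio is controlled by part~(iii) after a Carath\'eodory limit of the slit domains.
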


\subsubsection{Proof of boundedness and continuity in Proposition~\ref{prop: cts martingale}}

Boundedness: The boundedness of the process $\Mart^{(\star)}_t( z , \omega)$ follows from basic properties of Loewner evolutions, combined with some standard harmonic measure arguments. A proof is given for completeness in Appendix~\ref{app: boundedness pf}.

Continuity: Recall that $ W_t$ is the weak limit process on the space of continuous functions, thus by construction continuous in $t$. From basic properties of ordinary differential equations, it follows that also the processes $X^{(1)}_t,  \ldots,  X^{(2N)}_t$, $ g_t(z)$, and $g_t' (X^{(i)}_0)$ are then continuous. Thus, each individual factor in the denominator and numerator of the right-hand side of~\eqref{eq: cts mgale} is continuous. Finally, in the proof of boundedness it is shown that the processes in the denominator remain bounded away from zero. Continuity of $\Mart^{(\star)}_t( z )$ then follows.
{ \ }$\hfill \qed$

\subsubsection{Uniform convergence of discrete martingale observables}
\label{subsubsec: uniform conf of mg obs}

Before proceeding to prove the martingaleness in Proposition~\ref{prop: cts martingale}, we will need a uniform convergence result for the discrete martingale observables in Proposition~\ref{prop: discrete mgales collected}. 

In order to state the uniform convergence, we need some more notations. View the WST boundary-to-boundary branch from $e^{(n)}_j$, as mapped to $\bH$ by $\confmap_n$, as a Loewner chain. Denote by $t$ the continuous time parameter of this Loewner chain and by $\tau^{(n)}$ the continuous exit time of the localization neighbourhood $U$. Denote by $W^{(n)}_\cdot$ the driving function of the Loewner chain, as stopped at $\tau^{(n)}$ (so $W^{(n)}_\cdot \to W_\cdot$ weakly), and by $g^{(n)}_\cdot$ the solutions to the Loewner equation. Denote $X^{(n;i)}_t = g^{(n)}_t (X^{(n;i)}_0)$ the solutions of this Loewner equation starting from the boundary point $X^{(n;i)}_0 = \confmap_n(e^{(n)}_i)$ corresponding to the $i$:th marked boundary edge. For $z \in \bH \setminus U_\epsilon$ and $\omega \in \bH$, denote
\begin{align}
\label{eq: def of disc-cont process}
\Mart^{(n; \star)}_t( z , \omega) = \frac{ \PoissonKH ( g^{(n)}_t(z) , W^{(n)}_t )  }{ \PartF_\star (X^{(n; 1)}_t,  \ldots,  X^{(n; 2N)}_t) } \prod_{ \substack{ i = 1 \\ i \neq j} }^{2N} \frac{\PoissonKH ( \omega , X^{(n; i)}_0 )}{ (g^{(n)}_t)' (X^{(n; i)}_0)}, \qquad \text{stopped at } \tau
\end{align}
Finally, denote by $\lceil t \rceil^{(n)}$ (resp. $\lceil \tau^{(n)} \rceil^{(n)}$) the next time after $t$ (resp. $ \tau^{(n)} $) when the growth process has reached a vertex of $\Vert_n$, as intepreted on a WST branch on $\Gr_n$. 

\begin{proposition} 
\label{prop: unif conv of mgale obs}
Assume the setup of Theorem~\ref{thm: main result}, and let $r > 0$ be given.  There exist $\eps(n) = \eps(n, r)$, with $\eps(n) \to 0$ as $n \to \infty$, such that the following holds. For any $v, w \in \Vert_n$
with $\vert \confmap_n (v) \vert, \vert \confmap_n (w) \vert < 1/r$ and $d( \confmap_n (v) , \bdry ( \bH \setminus U_\epsilon)), d(\confmap_n (w), \bdry \bH) > r$, 
any realization of $W^{(n)}_\cdot$ possible under $\PR^{(n)}_\star$,
and any $t \leq \tau^{(n)}$
\begin{align*}
\left\vert
M^{(n; \star)}_{\lceil t \rceil^{(n)}}(v, w) - \Mart^{(n; \star)}_t( \confmap_n (v) , \confmap_n (w) )
\right\vert
\leq \eps (n),
\end{align*}
and thus in particular
\begin{align*}
\left\vert
M^{(n; \star)}_{\lceil \tau^{(n)} \rceil^{(n)}}(v, w) - \Mart^{(n; \star)}_{\tau^{(n)}} ( \confmap_n (v) , \confmap_n (w) )
\right\vert
\leq \eps (n);
\end{align*}
here $M^{(n; \star)}_{s}(v, w) $ are the $\PR^{(n)}_\star$ martingales in discrete time $s$ from Proposition~\ref{prop: discrete mgales collected}, for the WST boundary-to-boundary branches on $(\Gr_n; e_1^{(n)}, \ldots, e_{2N}^{(n)})$.
\end{proposition}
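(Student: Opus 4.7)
The plan is to factor both $M^{(n;\star)}_{\lceil t\rceil^{(n)}}(v,w)$ and $\Mart^{(n;\star)}_t(\confmap_n(v),\confmap_n(w))$ into matching pieces and match each piece via Theorem~\ref{prop: disc harm conv results}, exploiting a cancellation of reference-point normalizations. Concretely, I would fix a reference point $z_0\in\bH$ outside $\overline{U_\epsilon}$ and bounded away from $\infty,\R$ and from the real points $X_0^{(i)}$, together with interior vertices $u_n\in\Vert_n$ with $\confmap_n(u_n)\to z_0$. Write, for $s=\lceil t\rceil^{(n)}$ and with the notation of Proposition~\ref{prop: discrete mgales collected},
\begin{equation*}
M^{(n;\star)}_{s}(v,w)\;=\;\frac{\PoissonK^{\Gr_s}(v,e_j^{(s)})/\PoissonK^{\Gr_s}(u_n,e_j^{(s)})}{\tilde Z_\star^{\Gr_s}\big/\prod_{i=1}^{2N}\PoissonK^{\Gr_s}(u_n,e_i^{(s)})}\;\prod_{i\neq j}\frac{\PoissonK^{\Gr_1}(w,e_i)}{\PoissonK^{\Gr_s}(u_n,e_i^{(s)})}.
\end{equation*}
Each of the three factors is now in a form to which some part of Theorem~\ref{prop: disc harm conv results} applies.

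\textbf{Matching the factors.} The first factor is a ratio of Poisson kernels at different interior points in the \emph{same} slit graph $\Gr_s$, whose Carath\'eodory limit is $\confmap_n^{-1}(H_t^{(n)})$; by Theorem~\ref{prop: disc harm conv results}(ii) applied inside $\Gr_s$ it converges to $\PoissonKH^{H_t^{(n)}}(\confmap_n(v),\gamma(t))/\PoissonKH^{H_t^{(n)}}(z_0,\gamma(t))$, which by conformal invariance under $g_t^{(n)}$ equals $\PoissonKH(g_t^{(n)}(\confmap_n(v)),W_t^{(n)})/\PoissonKH(g_t^{(n)}(z_0),W_t^{(n)})$. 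For the second factor, Theorem~\ref{thm: disc connectivity probas} expresses $\tilde Z_\star^{\Gr_s}$ as the same $\Mmat^{-1}$-linear combination of excursion-kernel determinants on $\Gr_s$ that defines $\PartF_\star$ in~\eqref{eq: cont ptt fcns - conditional}; the normalization by $\prod_{i=1}^{2N}\PoissonK^{\Gr_s}(u_n,e_i^{(s)})$ divides each entry of the determinants exactly as required by Theorem~\ref{prop: disc harm conv results}(iii), so the whole factor converges to $\PartF_\star(X_t^{(n;1)},\ldots,X_t^{(n;2N)})\big/\prod_{i=1}^{2N}\PoissonKH(g_t^{(n)}(z_0),X_t^{(n;i)})$. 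For each factor with $i\neq j$, Theorem~\ref{prop: disc harm conv results}(iv) (with $\Gr^{(n)}=\Gr_1$ and $\tilde\Gr^{(n)}=\Gr_s$) produces exactly the Loewner-derivative $1/(g_t^{(n)})'(X_0^{(n;i)})$ together with $\PoissonKH(\confmap_n(w),X_0^{(n;i)})$ and a factor $1/\PoissonKH(g_t^{(n)}(z_0),X_t^{(n;i)})$. Assembling, the product of $\PoissonKH(g_t^{(n)}(z_0),X_t^{(n;i)})$-terms (including $i=j$, using $X_t^{(n;j)}=W_t^{(n)}$) cancels exactly between the inverted second factor and the third product, leaving precisely $\Mart^{(n;\star)}_t(\confmap_n(v),\confmap_n(w))$.

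\textbf{Uniformity.} The hypotheses $\vert\confmap_n(v)\vert,\vert\confmap_n(w)\vert<1/r$ together with the distance bounds put $\confmap_n(v),\confmap_n(w)$ in a fixed compact subset of $\bH\setminus\overline{U_\epsilon}$. The stopping at $\tau^{(n)}$ keeps the discrete Loewner hulls inside a fixed enlargement of $U_\epsilon$, hence uniformly bounded away from all the other marked points $X_0^{(n;i)}$, from $\confmap_n(v),\confmap_n(w)$, and from $z_0$. This uniform geometric separation ensures that the error rates $\varepsilon(n,r)$ of each invocation of Theorem~\ref{prop: disc harm conv results}(ii)--(iv) can be taken with a \emph{single} $r>0$ valid for all admissible realizations of $W^{(n)}_\cdot$. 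Positivity of $\PartF_\star$ on the compact Weyl-chamber set swept out by admissible $(X_t^{(n;1)},\ldots,X_t^{(n;2N)})$ configurations (from \cite[Theorem~4.1]{KKP}) provides a uniform lower bound on the denominator, allowing the factor-wise estimates to be combined into a single bound $\varepsilon(n)\to 0$.

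\textbf{Main obstacle.} The delicate point is the uniform application of Theorem~\ref{prop: disc harm conv results}(ii) ``inside the slit graph $\Gr_s$''. Two errors must be controlled uniformly over realizations of $W^{(n)}_\cdot$: (i) the combinatorial slit $\domain_n\setminus\gamma^{(n)}[0,s-1]$ versus the Loewner-conformal representative $\confmap_n^{-1}(H_t^{(n)})$ of its associated hull, and (ii) the discrete/continuous time discrepancy $\lceil t\rceil^{(n)}-t$, which corresponds to $O(\mesh_n^2)$ half-plane capacity. Both can be controlled by Carath\'eodory continuity of the mapping-out functions, using the uniform upper bound on half-plane capacity imposed by $\tau^{(n)}$: the slit domains form a precompact family in the Carath\'eodory topology, uniformly in the realization, so the reference-point distances and the local geometry near the tip remain in a single controlled regime. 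Once these uniform stability statements are in place, the application of Theorem~\ref{prop: disc harm conv results}(ii) in $\Gr_s$ goes through with a realization-independent rate, which is the key technical input ensuring the assembled estimate is uniform as claimed.
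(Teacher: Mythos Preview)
Your approach is correct and essentially the same as the paper's. Two differences are worth noting. First, the paper takes $v$ itself as the normalization vertex rather than an auxiliary $u_n$, writing
\[
M^{(n;\star)}_s(v,w)=\frac{\prod_{i=1}^{2N}\PoissonK^{\Gr_s}(v,e_i^{(s)})}{\tilde Z_\star^{\Gr_s}}\;\prod_{i\neq j}\frac{\PoissonK(w,e_i)}{\PoissonK^{\Gr_s}(v,e_i)},
\]
so that only parts~(iii) and~(iv) of Theorem~\ref{prop: disc harm conv results} are needed (your separate first factor and the appeal to~(ii) become unnecessary). Second, and more to the point of your ``main obstacle'': the paper does not attempt to extract realization-independent rates from Theorem~\ref{prop: disc harm conv results}. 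Instead it argues by contradiction. If the difference stayed $\ge\delta$ along some sequence of $n$, realizations $W^{(n)}_\cdot$, times $t^{(n)}\le\tau^{(n)}$, and vertices $v^{(n)},w^{(n)}$, then by the precompactness you observe one extracts a Carath\'eodory-convergent sub-subsequence of the slit data $(\Gr^{(n)}_s;e_1^{(s)},\ldots,e_{2N}^{(s)};v^{(n)},w^{(n)})$ with $s=\lceil t^{(n)}\rceil^{(n)}$. Along this \emph{fixed} sequence, Theorem~\ref{prop: disc harm conv results}(iii),(iv) applies as stated to $M^{(n;\star)}_s$, and basic Carath\'eodory stability handles $\Mart^{(n;\star)}_{t^{(n)}}$; both tend to the same limit, giving the contradiction. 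This dissolves the two issues you list: along the extracted subsequence one genuinely has a Carath\'eodory limit, so no uniform-rate refinement of Theorem~\ref{prop: disc harm conv results} is needed, and the discrete/continuous time discrepancy $\lceil t\rceil^{(n)}-t$ is absorbed into the Carath\'eodory convergence.
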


\begin{proof}
Fix a realization of the WST boundary-to-boundary branch from $e^{(n)}_j$, and the corresponding driving function $W^{(n)}_\cdot$. Denote by $s$ the discrete time parameter, and fix also $s \leq \lceil \tau^{(n)} \rceil^{(n)}$. We have thus also fixed the graph $\Gr^{(n)}_{s }$. Recall the expression for $M^{(n;\star)}_s (v, w )$ from Proposition~\ref{prop: discrete mgales collected} (suppressing all indices $n$ in the expression to streamline the notation):
\begin{align}
\label{eq: disc mg obs expanded}
M^{(n; \star)}_s (v, w ) &= \frac{\PoissonK^{\Gr_{s } } (v, e_j^{(s)} )  }{ \tilde{Z}^{ \Gr_{s } }_\star }  \prod_{ \substack{ i = 1 \\ i \neq j} }^{2N} \PoissonK (w, e_i )
= \frac{  \prod_{  i = 1  }^{2N} \PoissonK^{\Gr_{s } } (v, e_i^{(s)} )  }{ \tilde{Z}^{ \Gr_{s } }_\star }   \prod_{ \substack{ i = 1 \\ i \neq j} }^{2N} \frac{ \PoissonK (w, e_i ) }{\PoissonK^{\Gr_{s } } (v, e_i )}  
\qquad \text{stopped at $s=T$ },
\end{align}
where we denoted $e_i^{(s)} = e_i$ for $i \neq j$.
Notice that (for all $n$ large enough) $\lceil \tau^{(n)} \rceil^{(n)}$
comes before $T$ , so we need not care about the stopping at $T$ in what follows.

Now, assume for a contradiction that for some $\delta > 0$, there existed infinitely many $n$, $W^{(n)}_\cdot$, $t^{(n)} \leq \tau^{(n)} $, 
$v^{(n)}$, and $w^{(n)}$ such that
\begin{align*}
\left\vert
M^{(n; \star)}_{\lceil t^{(n)} \rceil^{(n)}}(v^{(n)}, w^{(n)}) - \Mart^{(n; \star)}_{t^{(n)}} ( \confmap_n (v^{(n)}) , \confmap_n (w^{(n)}) )
\right\vert
\geq \delta.
\end{align*}
By standard compactness arguments, we may extract a subsequence such that \linebreak $(\Gr^{(n)}_{ s }; e_1^{(s)}, \ldots, e_{2N}^{(s)} ; v^{(n)}, w^{(n)} ) $, with $s = \lceil t^{(n)} \rceil^{(n)}$, converge in the Carath\'{e}odory sense. Note that by the assumed setup, also $(\Gr^{(n)}; e_1^{(n)}, \ldots, e_{2N}^{(n)} ; v^{(n)}, w^{(n)} ) $ convergence in the Carath\'{e}odory sense. Consider now $n \to \infty$ along this subsequence. Using Theorem~\ref{prop: disc harm conv results}(iii) and~(iv) for $M^{(n; \star)}_{\lceil t^{(n)} \rceil^{(n)}}(v^{(n)}, w^{(n)})$ in~\eqref{eq: disc mg obs expanded}, and basic Carath\'{e}odory stability arguments for $\Mart^{(n; \star)}_{t^{(n)}} ( \confmap_n (v^{(n)}) , \confmap_n (w^{(n)}) )$, we observe that these two quantities then converge to the same limit, a contradiction.
\end{proof}

\subsubsection{Proof of martingaleness in Proposition~\ref{prop: cts martingale}}

$\Mart^{(\star)}_t( z , \omega)$ is clearly $\mathscr{F}_t$ adapted, and it is integrable since it is bounded. It remains to check the conditional expectation property. We claim that, for all $t \geq 0$,
\begin{align*}
\Mart^{(\star)}_t( z , \omega) = \limEX_\star [ \Mart^{(\star)}_\tau ( z , \omega) \; \vert \; \mathscr{F}_t],
\end{align*}
from which the conditional expectation property follows. Equivalently, we wish to show that for any $f_t$ continuous bounded function of $W$ measurable with respect to $\mathscr{F}_t$, we have
\begin{align}
\label{eq: desired cond exp}
\limEX_\star [ \Mart^{(\star)}_t ( z , \omega ) f_t (W) ] = \limEX_\star [ \Mart^{(\star)}_\tau ( z , \omega) f_t (W) ].
\end{align}
Let thus us prove~\eqref{eq: desired cond exp}. The proof is based on approximating the expectations on either side above by their discrete analogues. For notational simplicity, we will perform the analysis for the left-hand side --- the right-hand side can be treated analogously.

We would like to use the weak convergence $W^{(n)}_\cdot \to W_\cdot$. Note however that the process $\Mart^{(n;\star)}_t ( z , \omega)$ takes as input not only $W^{(n)}_\cdot$ but also the processes $X^{(n; i)}_t = g^{(n)}_t (X^{(n; i)}_0)$ and $( g^{(n)}_t )'  (X^{(n; i)}_0)$, with $i \ne j$. If we replaced them in the definition~\eqref{eq: def of disc-cont process} with $g^{(n)}_t (X^{(i)}_0)$ and $( g^{(n)}_t )'  (X^{(i)}_0)$, then $\Mart^{(n;\star)}_t ( z , \omega)$ and $\Mart^{(\star)}_t ( z , \omega)$ would both be simply the same continuous bounded function $h_{(t, z , \omega )}$ of the driving function, applied to $W^{(n)}_\cdot $ and $ W_\cdot$, respectively. (The boundedness uniformly over the driving function was proven in Lemma~\ref{lem: mgale boundedness} in Appendix~\ref{app: boundedness pf}, and continuity follows from the stability of the Loewner equation with respect to driving term.) Let us now compare $\Mart^{(n;\star)}_t ( z , \omega)$ and $h_{(t, z , \omega )} (W^{(n)}) $, i.e., replace $ g^{(n)}_t (X^{(n; i)}_0)$ and $( g^{(n)}_t )'  (X^{(n; i)}_0)$ by $g^{(n)}_t (X^{(i)}_0)$ and $( g^{(n)}_t )'  (X^{(i)}_0)$. First, changing $X^{(i)}_0$ to $X^{(n; i)}_0$ will perturb $g^{(n)}_t(\cdot)$ and $(g^{(n)}_t)'(\cdot)$ applied to these starting points by a small amount, uniformly over $t$ and $W^{(n)}$.\footnote{
For $(g^{(n)}_t)(\cdot)$, such a stability follows directly from Gr\"{o}nwall's lemma, similarly to Equation~\eqref{eq: Gronwall} in Appendix~\ref{app: boundedness pf}; using this stability and Gr\"{o}nwall's lemma again, one then obtains a similar stability for $(g^{(n)}_t)'(\cdot)$.
}
Second, by the compactness of the possible coordinates $g^{(n)}_t(X^{(i)}_0)$, proven in Appendix~\ref{app: boundedness pf}, also $1/\PartF_\star$ only acquires a small perturbation, again uniformly over $t$ and $W^{(n)}$ (recall that a continuous function is uniformly continuous on a compact set). In conclusion, we have
\begin{align}
\label{M as a cts fcn}
\Mart^{(\star)}_t ( z , \omega ) = h_{(t, z , \omega )} (W)
\qquad
\text{and}
\\
\label{Mn as a cts fcn}
\Mart^{(n; \star)}_t ( z , \omega ) = h_{(t, z , \omega )} (W^{(n)}) + o(1),
\end{align}
the latter asymptotic formula as $n \to \infty$, $o(1)$ small uniformly over $t$ and $W^{(n)}$. Altogether, we get
\begin{align*}
\limEX_\star & [ \Mart^{(\star)}_t ( z , \omega ) f_t (W) ]
\\
 \text{(use~\eqref{M as a cts fcn})} &=  \limEX_\star   [ h_{(t, z , \omega )} (W ) f_t (W) ] 
 \\
 \text{(weak conv.)} &=  \EX^{(n)}_\star  [ h_{(t, z , \omega )} (W^{(n)} ) f_t (W^{(n)}) ] 
 + o(1) \\
 \text{(use~\eqref{Mn as a cts fcn})} &=  \EX^{(n)}_\star [ \Mart^{(n; \star)}_t ( z , \omega ) f_t (W^{(n)}) + o(1)]
 + o(1)  \\
  \text{($o(1)$ uniform)} &=  \EX^{(n)}_\star  [ \Mart^{(n; \star)}_t ( z , \omega ) f_t (W^{(n)}) ] 
  + o(1).
\end{align*}
Note that here and in continuation it is important that the error terms inside the expectation operator are uniform, and can thus be taken outside of the expectation.

Next, let $v^{(n)} \in \Vert_n$ be the vertex for which $\confmap_n (v^{(n)}) $ is as close to $z$ as possible, and define $w^{(n)} \in \Vert_n$ closest to $\omega$ in the analogous sense. It is easy to deduce that $\Mart^{(n; \star)}_t ( z, \omega)$ is uniformly close to $\Mart^{(n; \star)}_t ( \confmap_n (v^{(n)}), \confmap_n (w^{(n)}))$ as $n \to \infty$. One then obtains
\begin{align}
\nonumber
\limEX_\star [ \Mart^{(\star)}_t ( z , \omega ) f_t (W) ] & =  
\EX^{(n)}_\star   [ \Mart^{(n; \star)}_t ( \confmap_n (v^{(n)}), \confmap_n (w^{(n)})) f_t (W^{(n)}) ] 
+ o(1) \\
\label{eq: cont to disc process 1}
\text{(Prop.~\ref{prop: unif conv of mgale obs})} &= \EX^{(n)}_\star  [ M^{(n; \star)}_{\lceil t \rceil^{(n)}  \wedge \lceil \tau^{(n)} \rceil^{(n)} }(v^{(n)}, w^{(n)} ) f_t (W^{(n)}) ] 
+ o(1),
\end{align}
where in both steps a uniform error term $o(1)$ was taken out of the expectation and absorbed into the previous one.

Repeating the argument of the previous two paragraphs for the right-hand side of~\eqref{eq: desired cond exp}, we get
\begin{align}
\label{eq: cont to disc process 2}
\limEX_\star [ \Mart^{(\star)}_\tau ( z , \omega) f_t (W) ] & =  
\EX^{(n)}_\star [ M^{(n; \star)}_{\lceil \tau^{(n)} \rceil^{(n)}}( v^{(n)}, w^{(n)} ) f_t (W^{(n)}) ]
+ o(1).
\end{align}
Now, notice that $f_t (W^{(n)})$ is measurable in the stopped sigma algebra $\mathcal{F}_{\lceil t \rceil^{(n)}  \wedge \lceil \tau^{(n)} \rceil^{(n)}}$ of the discrete time filtration $\mathcal{F}_s$ of the WST branch on $\Gr^{(n)}$. Also, for each fixed $n$, the stopping times $\lceil t \rceil^{(n)}  \wedge \lceil \tau^{(n)} \rceil^{(n)}$ and $ \lceil \tau^{(n)} \rceil^{(n)}$ are bounded, and thus we have
\begin{align*}
M^{(n; \star)}_{\lceil t \rceil^{(n)}  \wedge \lceil \tau^{(n)} \rceil^{(n)} }(z^{(n)}, \omega^{(n)} ) = 
\EX^{(n)}_\star \bigg[ M^{(n; \star)}_{\lceil \tau^{(n)} \rceil^{(n)}}( z^{(n)}, \omega^{(n)} ) \; \bigg\vert \; \mathcal{F}_{\lceil t \rceil^{(n)}  \wedge \lceil \tau^{(n)} \rceil^{(n)}} \bigg].
\end{align*}
With these two observations,~\eqref{eq: cont to disc process 2} yields
\begin{align}
\label{eq: cont to disc process 3}
\limEX_\star [ \Mart^{(\star)}_\tau ( z , \omega) f_t (W) ] & =  
\EX^{(n)}_\star [ M^{(n; \star)}_{\lceil t \rceil^{(n)}  \wedge \lceil \tau^{(n)} \rceil^{(n)} }(z^{(n)}, \omega^{(n)} ) f_t (W^{(n)}) ]
+ o(1).
\end{align}
Finally, combining~\eqref{eq: cont to disc process 1} and~\eqref{eq: cont to disc process 3} and taking the limit $n \to \infty$ proves~\eqref{eq: desired cond exp}. This finishes the martingaleness part, and the entire proof of Proposition~\ref{prop: cts martingale}.
{ \ }$\hfill \qed$

\subsection{Identification via martingales}
\label{subsubsec: ito exercise}

The second step in our proof of Theorem~\ref{thm: main result} is to use the martingales from the first step to identify $W$ via explicit It\^{o} calculus. In order to apply It\^{o}'s theorem, we first need to show the semimartingaleness of the driving function $W$.

\begin{lem}
\label{lem: semimg dr fcn}
The weak limit process $W$ is a semimartingale.
\end{lem}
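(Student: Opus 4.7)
The strategy is to invert the relation given by the martingale $\Mart^{(\star)}_t(z,\omega)$ of Proposition~\ref{prop: cts martingale} and thereby realize $W_t$ as a $C^2$ function of continuous semimartingales; the standard stability of semimartingales under $C^2$ maps then yields the conclusion.

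Fix $z = x+iy \in \bH \setminus U_\epsilon$ and $\omega \in \bH$, and write $g_t(z) = A_t + i B_t$. Since $W$ is continuous and $g_t(z)$ stays bounded away from $W_t$ up to time $\tau$ (as $z \notin U_\epsilon$), the Loewner equation~\eqref{eq: Loewner ODE} shows that $A_t$ and $B_t$ are continuous, adapted, and of finite variation on $[0,\tau]$. Applying~\eqref{eq: LE for derivative} at the real points $X^{(i)}_0$, $i \neq j$, gives the same for $X^{(i)}_t$ and $g_t'(X^{(i)}_0)$. Consequently the prefactor
\[
F_t \;:=\; \prod_{\substack{i=1 \\ i \neq j}}^{2N} \frac{\PoissonKH(\omega, X^{(i)}_0)}{g_t'(X^{(i)}_0)} \cdot \frac{1}{\PartF_\star(X^{(1)}_t, \ldots, X^{(2N)}_t)}
\]
is a continuous adapted finite-variation process, bounded above and bounded below away from zero by the estimates of Appendix~\ref{app: boundedness pf}. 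Using~\eqref{eq: defn of cont poisson kernel}, the martingale rewrites as
\[
\Mart^{(\star)}_t(z,\omega) \;=\; \frac{B_t\, F_t}{\pi \bigl((A_t - W_t)^2 + B_t^2\bigr)},
\]
which can be solved for $(W_t - A_t)^2$. Choose a test point $z^+ \in \bH \setminus U_\epsilon$ with $\Re z^+$ sufficiently larger than $W_0 = X^{(j)}_0$; by continuity there is a stopping time $\sigma > 0$ such that $A_t(z^+) > W_t$ on $[0,\sigma]$, and taking the negative square root yields
\[
W_t \;=\; A_t(z^+) - \sqrt{\, \frac{B_t(z^+)\, F_t}{\pi\, \Mart^{(\star)}_t(z^+,\omega)} - B_t(z^+)^2 \,}.
\]
On $[0,\sigma]$ the martingale $\Mart^{(\star)}_t(z^+,\omega)$ is bounded and bounded away from zero, and the argument of the square root is strictly positive; thus $W_t$ is a $C^2$ function of the continuous semimartingales appearing on the right-hand side, proving that $W$ is a continuous semimartingale on $[0,\sigma]$.

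To extend to the entire interval $[0,\tau]$ we localize: repeating the argument with a point $z^- \in \bH \setminus U_\epsilon$ of $\Re z^- \ll W_0$ gives a symmetric expression valid as long as $A_t(z^-) < W_t$, and by freely varying the real part of the test point we cover $[0,\tau]$ by countably many stopping intervals on each of which $W$ is a semimartingale; continuity on the overlaps concludes that $W$ is a continuous semimartingale on $[0,\tau]$. The principal technical obstacle is the local invertibility of the map $W_t \mapsto \Mart^{(\star)}_t(z,\omega)$, which degenerates exactly at $A_t(z) = W_t$ and is avoided by choosing test points $z$ far enough from $W_t$, using the quantitative bounds of Appendix~\ref{app: boundedness pf}.
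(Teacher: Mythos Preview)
There is a genuine gap in your explicit inversion. You define
\[
F_t \;=\; \Bigl(\prod_{\substack{i=1 \\ i\neq j}}^{2N}\tfrac{\PoissonKH(\omega,X^{(i)}_0)}{g_t'(X^{(i)}_0)}\Bigr)\cdot \frac{1}{\PartF_\star(X^{(1)}_t,\ldots,X^{(2N)}_t)}
\]
and assert that $F_t$ is a finite-variation process, because the Loewner flow makes $X^{(i)}_t$ and $g_t'(X^{(i)}_0)$ finite variation for $i\neq j$. But the argument list of $\PartF_\star$ contains all $2N$ coordinates, and by definition $X^{(j)}_t=W_t$. Hence $F_t$ depends on $W_t$ through $\PartF_\star$, and you cannot claim it is of finite variation without already knowing that $W$ is a semimartingale --- which is precisely what you are trying to prove. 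In particular the relation $\Mart_t=\tfrac{B_tF_t}{\pi((A_t-W_t)^2+B_t^2)}$ is not a quadratic equation in $W_t$ with known coefficients, so the explicit square-root formula for $W_t$ is not valid.

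The paper's proof confronts exactly this issue: it writes the martingale as $\Im f$ with $f$ a smooth function of $W_t$ (through \emph{both} the $1/(z_t-W_t)$ factor and $\PartF_\star$), of the finite-variation processes $z_t,(X^{(i)}_t)_{i\neq j},(g_t'(X^{(i)}_0))_{i\neq j}$, and then applies the Implicit Function Theorem to solve for $W_t$. The non-degeneracy condition is $\partial_{W_t}\Im f\neq 0$, which is \emph{not} equivalent to $A_t\neq W_t$ (there is an additional $-\partial_j\PartF_\star/\PartF_\star$ term, cf.\ equation~\eqref{eq: stoch D 1}); the paper handles it by analyticity in $z_t$ and a countable dense family of test points. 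Your argument would be repaired by following this route, but then it is no longer an explicit inversion and coincides with the paper's proof.
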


\begin{proof}
The proof is based on applying the Implicit function theorem to the martingales of Proposition~\ref{prop: cts martingale}. Denote $g_t(z) = z_t$ and define a complex-valued process $f$ in terms of $ W_t$ and the time-differentiable processes $z_t$,  $X^{(i)}_t$, and $g_t' (X^{(i)}_0)$, where $i \neq j$, given by
\begin{align}
\label{eq: process f}
f(z_t; W_t; (X^{(i)}_t)_{i \neq j}; (g_t' (X^{(i)}_0))_{i \neq j})
 :=
&  \frac{ 1 }{ (z_t- W_t ) \PartF_\star (X^{(1)}_t,  \ldots,  X^{(2N)}_t) } \prod_{ \substack{ i = 1 \\ i \neq j} }^{2N} \frac{1}{ g_t' (X^{(i)}_0)} ,
\quad \text{stopped at } \tau.
\end{align}
Note that for $z \in \bH \setminus U_\epsilon$ and given $\omega \in \bH$, $\Mart^{(\star)}_t( z , \omega)$ is by~\eqref{eq: cts mgale} and~\eqref{eq: defn of cont poisson kernel} a constant multiple of $\Im (f)$.
Observe also that $f$ and $\partial_{W_t} f$ are complex analytic in $z_t$ (we will below treat $z_t \in \bH$ as the complex argument of $f$).

We first claim that for any function $ W_\cdot$ (and the related maps $g_\cdot$) and any $t < \tau$,  we have $\partial_{W_t} \Im (f) = \Im (\partial_{W_t}  f) \neq 0$ for almost every $z \in H_t$, or equivalently, for almost every $z_t \in \bH$. Indeed, by basic properties of analytic functions, either $ \Im (\partial_{W_t}  f) = 0$ for all $z_t \in \bH$, or $\Im (\partial_{W_t}  f) \neq 0$ for almost every $z_t \in \bH$. By explicit differentiation, we see that the latter occurs.

Take now a deterministic countable dense set of complex numbers $z$ in a fixed ball in $\bH \setminus U_\epsilon$. By the previous paragraph, for any $t < \tau$, we must have $\partial_{W_t} \Im (f) \neq 0$ for some of these $z$'s, and by continuity in time, $\partial_{W} \Im (f) \neq 0$ then also holds on some time interval around $t$.
Now, by the Implicit function theorem, whenever $\partial_{W_t} \Im (f) \neq 0$, the collection of local inverses of $\Im (f)$ provides a smooth function $\psi$ such that the relation
\begin{align*}
W_t = \psi(z_t, (X^{(i)}_t)_{i \neq j}; (g_t ' (X^{(i)}_0))_{i \neq j}, \Mart^{(\star)}_t( z , \omega))
\end{align*}
holds (here we also used $ (\Im (f))_t = cst. \times \Mart^{(\star)}_t( z , \omega)$).
In conclusion, we have deterministic collections of complex numbers $z$ and local inverses $\psi$ of $\Im (f)$, and it holds that for every $t < \tau$ there exist $z$ and $\psi$ in these collections such that the relation above is valid over some open time interval containing $t$. We now conclude by observing that $\psi(z_t, (X^{(i)}_t)_{i \neq j}; (g_t ' (X^{(i)}_0))_{i \neq j}, \Mart^{(\star)}_t( z , \omega))$ (when it is defined) is a semimartingale. Indeed, $\psi$ is smooth, $\Mart^{(\star)}_t ( z , \omega)$ is a continuous bounded martingale by Proposition~\ref{prop: cts martingale}, while $g_t (z)$, $g_t' (X^{( i)}_0 )$, and $X^{( i)}_t$ are differentiable in time by the Loewner equations~\eqref{eq: Loewner ODE} and~\eqref{eq: LE for derivative}. The claim thus follows by basic martingale theory.
\end{proof}

\begin{proof}[Proofs of Theorems~\ref{thm: main result} and~\ref{cor: CFT degeneracy PDEs}]
Let $f$ be as defined in~\eqref{eq: process f}.
As $W$ is a semimartingale, we can apply It\^{o}'s theorem to deduce that for $t < \tau$ (we omit writing the arguments of $f$)
\begin{align}
\label{eq: Ito formula}
\ud f = \frac{1}{2} \partial^2_{W_t} f \ud \langle W, W \rangle_t + 
\partial_{W_t} f \ud W_t + \partial_{z_t} f \ud z_t 
+\sum_{i \neq j} \partial_{X^{(i)}_t} f \ud X^{(i)}_t 
+ \sum_{i \neq j} \partial_{g_t'(X^{(i)}_0) } f \ud (g_t'(X^{(i)}_0))_t.
\end{align}
Let us now compute the various terms above (we also omit writing the arguments of $\PartF_\star$ and its derivatives):
\begin{small}
\begin{align}
\label{eq: stoch D 1}
\partial_{W_t} f \ud W_t &= \left( \frac{ 1 }{ z_t- W_t  }  -  \frac{ \partial_j  \PartF_\star  }{  \PartF_\star } \right) f \ud W_t \\
\label{eq: stoch D 2}
\frac{1}{2} \partial_{W_t}^{2} f  \ud \langle W, W \rangle_t &= \frac{1}{2}  \left( \frac{ 2 }{ (z_t- W_t )^2 }  + 2  \left( \frac{ \partial_j  \PartF_\star  }{  \PartF_\star } \right)^2  - \frac{ 2 \partial_j  \PartF_\star  }{ (z_t- W_t ) \PartF_\star } - \frac{ \partial_{jj}  \PartF_\star  }{ \PartF_\star } \right) f  \ud \langle W, W \rangle_t  \\
\label{eq: stoch D 3}
\partial_{z_t} f \ud z_t &= - \frac{ 1 }{ z_t- W_t  } f \frac{ 2 }{ z_t- W_t  } \ud t \\
\label{eq: stoch D 4}
\partial_{X^{(i)}_t} f \ud X^{(i)}_t &= - \frac{ \partial_{i}  \PartF_\star  }{ \PartF_\star  } f  \frac{ 2 }{ X^{(i)}_t - W_t  } \ud t \\\
\label{eq: stoch D 5}
\partial_{g_t'(X^{(i)}_0 ) } f \ud (g_t'(X^{(i)}_0 ))_t &= \frac{ 2 }{ (X^{(i)}_t - W_t  )^2 } f \ud t,
\end{align}
\end{small}%
where the three last equations also used the Loewner differential equations~\eqref{eq: Loewner ODE} and~\eqref{eq: LE for derivative}.

By~\eqref{eq: Ito formula}--\eqref{eq: stoch D 5} (and noticing that $f$ is a real scaling of $1  /(z_t- W_t ) $), each term of $\ud f$ is of the form 
\begin{align*}
\frac{ 1 }{ (z_t- W_t )^k } \times [\text{real stochastic differential}], \qquad
\text{where } k \in \{ 1, 2, 3 \}.
\end{align*}
Also, notice that $\Im (f)$ is a martingale by Proposition~\ref{prop: cts martingale} (i.e., the drift part of $\Im (\ud f)$ vanishes) \textit{simultaneously for all} $z \in \bH \setminus U_\epsilon$. It is easy to show that this can only occur if the drift stochastic differential coefficients of $1 / (z_t- W_t )^k $ cancel out \textit{for each $k$ individually}.

We now examine the different powers $1 / (z_t- W_t )^k $ individually. Terms of the form $f/(z_t- W_t )^2$ in~\eqref{eq: Ito formula} come from~\eqref{eq: stoch D 2} and~\eqref{eq: stoch D 3}, and impose
\begin{align}
\nonumber
\frac{1}{2}  \frac{ 2 f }{ (z_t- W_t )^2 } \ud \langle W, W \rangle_t - \frac{ 2 f }{ (z_t- W_t )^2 } \ud t &= 0 \\
\label{eq: mgale part}
\Leftrightarrow  \ud \langle W, W \rangle_t  &= 2 \ud t.
\end{align} 
Terms  $f/(z_t- W_t )$ appear in~\eqref{eq: stoch D 1} and~\eqref{eq: stoch D 2} and they yield
\begin{align}
\nonumber
\frac{ f }{ z_t- W_t  } \ud [ \text{drift part of }W ]_t - \frac{1}{2} \frac{ 2 f \partial_j  \PartF_\star  }{ (z_t- W_t ) \PartF_\star } \ud \langle W, W \rangle_t &= 0 \\
\label{eq: drift part}
\Leftrightarrow 
\ud [ \text{drift part of }W ]_t  &= 2 \frac{  \partial_j  \PartF_\star  }{  \PartF_\star } \ud t.
\end{align}
Equations~\eqref{eq: mgale part} and~\eqref{eq: drift part} are sufficient to identify the stochastic differential of the semimartingale $W$ as
\begin{align*}
\ud W_t &= \sqrt{ 2 } \ud B_t + 2 \frac{\partial_j \PartF_\star (X^{(1)}_t  \ldots  X^{(2N)}_t )}{\PartF_\star (X^{(1)}_t  \ldots  X^{(2N)}_t )} \ud t,
\end{align*}
i.e., $W_t$ is the $\SLE(2)$ driving function with the partition function $\PartF_\star$. This finishes the proof of Theorem~\ref{thm: main result}.

Let us yet prove Theorem~\ref{cor: CFT degeneracy PDEs}. This is based on collecting the terms of the form $f/(z_t- W_t )^0$ in~\eqref{eq: Ito formula} from~\eqref{eq: stoch D 1},~\eqref{eq: stoch D 2},~\eqref{eq: stoch D 4}, and~~\eqref{eq: stoch D 5}:
\begin{align*}
-  \frac{ \partial_j  \PartF_\star  }{  \PartF_\star } f \ud [ \text{drift part of }W ]_t 
+
\frac{1}{2} \left(  2  \left( \frac{ \partial_j  \PartF_\star  }{  \PartF_\star } \right)^2  - \frac{ \partial_{jj}  \PartF_\star  }{ \PartF_\star } \right) f  \ud \langle W, W \rangle_t  & \\
-
\sum_{i \neq j} \frac{ \partial_{i}  \PartF_\star  }{ \PartF_\star  } f  \frac{ 2 }{ X^{(i)}_t - W_t  } \ud t
+
\sum_{i \neq j} \frac{ 2 }{ (X^{(i)}_t - W_t  )^2 } f \ud t
&= 0 \\
- \frac{ \partial_{jj}  \PartF_\star  }{ \PartF_\star } \ud t 
+
\sum_{i \neq j} \left( - \frac{ \partial_{i}  \PartF_\star  }{ \PartF_\star  } \frac{ 2 }{ X^{(i)}_t - W_t  } \ud t
+
\frac{ 2 }{ (X^{(i)}_t - W_t  )^2 }  \ud t \right) &= 0.
\end{align*}
Since this must hold for any $t$ and any initial configuration of the points $(X^{(1)}_0,  \ldots,  X^{(2N)}_0)$, as well as for any $j$, Theorem~\ref{cor: CFT degeneracy PDEs} follows.
\end{proof}

\subsection{Alternative proof strategies}

The proof of Theorem~\ref{thm: main result} relied on a discrete martingale observable that was a discrete Girsanov transform of the one-branch martingale~\eqref{eq: classical LERW mgale}. The limit identification step was then identical to that in the one-curve case, given by the special case $\star=1$. It seems possible that also the other martingales of Lemma~\ref{lem: Z-ratios are cond probas} and Proposition~\ref{prop: discrete mgales collected} could be used for proving Theorem~\ref{thm: main result}, as originally suggested in~\cite{mie2}. We now briefly describe three alternative proof strategies that seem tractable; note that the discussion below is speculative.
\begin{itemize}
\item[i)] Take as an input the identification of the one-curve scaling limit $\limEX_1$ as an $\SLE(2)$. Using the discrete transform converting expectations  of $\mathcal{F}_t$-measurable functions from $\EX^{(n)}_1$ to $\EX^{(n)}_N$ or $\EX^{(n)}_\alpha$, Lemma~\ref{lem: Z-ratios are cond probas}, find the continuous transform converting expectations  $\mathscr{F}_t$-measurable functions from $\limEX_1$ to $\limEX_N$ or $\limEX_\alpha$. Apply Girsanov's theorem to convert the driving function from $\limEX_1$ to $\limEX_N$ or $\limEX_\alpha$. This strategy is applied in~\cite{KS18} for FK-Ising and percolation.
\item[ii)] Take as an input the identification of the one-curve scaling limit and Theorem~\ref{cor: CFT degeneracy PDEs}, as proven independently in~\cite[Theorem~4.1]{KKP}. Use the fourth and fifth martingale $Z_\beta / Z_\star$ of Proposition~\ref{prop: discrete mgales collected}. Here two technical difficulties arise. First, proving Lemma~\ref{lem: semimg dr fcn} requires detailed analysis of the derivatives of the martingale functions, and these derivatives may be zero (simultaneously for all $\beta$) at least if $N=2$. Second, in the proof of Theorem~\ref{thm: main result}, we need at least two processes to identify the two differentials $\ud \langle W, W \rangle_t $ and $ \ud [ \text{drift part of }W ]_t $, so we need many enough martingales $\PartF_\beta / \PartF_\star$ (scaling limits of $Z_\beta / Z_\star$), and we need to establish suitable linear independence type results for these martingales (especially $N=2$ is a problem again).
Ways to overcome these difficulties are:
\begin{itemize}
\item[a)] to argue absolute continuity with respect to the one-curve case, which implies that $W$ is a semimartingale and $\ud \langle W, W \rangle_t = 2 \ud t$; or
\item[b)] to modify the fourth and fifth martingale results of Proposition~\ref{prop: discrete mgales collected}: $Z_\beta $  may be taken to be a WST connectivity partition function of $2M > 2N$ boundary points, and $Z_\beta / Z_\star$ is still a $\PR_\star$ martingale. The freedom of choice of these extra boundary points would probably solve the problems named above.
\end{itemize}
\item[iii)] A completely different approach based on global multiple SLEs is outlined in an update added to~\cite[Conjecture~4.3]{KKP} after the publication of~\cite{PW, BPW}.
\end{itemize}

\bigskip

\section{An analogous result for a boundary-visiting branch}
\label{se: bdry visiting branch}

We now sketch the proof of a local identification of the scaling limit of a single spanning tree branch conditioned on boundary visits. The result and its proof are closely analogous to our main theorem~\ref{thm: main result}, and we trust that the reader can fill the details omitted here for the sake of brevity. In particular, this proof provides another example of how discrete partition functions can be used for transforming martingale observables, in this case from the usual branch to the boundary-visiting branch.

\subsection{Statement}

This generalization only addresses the (isoradial)  square lattice $ \Z^2$, and the WST on its subgraph $\Gr = (\Vert, \Edg)$ thus becomes the \emph{uniform random spanning tree with wired boundary conditions (UST)}. An edge $\hat{e} \in \Edg$ is called \emph{boundary-neighbouring} if it is between two interior vertices, but both of these interior vertices are adjacent to the boundary vertices $\Vert^\bdry$. A boundary branch in the UST is said to \emph{visit boundary} at $\hat{e}$ if it traverses through $\hat{e}$. In this section, we will consider a single UST boundary-to-boundary branch between the boundary edges $e_1 = \ein$ and $e_2 = \eout$, with the additional condition of visiting boundary at the boundary-neighbouring edges $\hat{e}_1, \ldots, \hat{e}_{N'}$ in an order $\omega$, see Figure~\ref{fig: bdry visits bijection}(left) for an illustration. (We will always assume that the order of visits $\omega$ is topologically possible.)

The scaling limits are characterized in the following setup. Let \linebreak
$(\Gr_n; \ein^{(n)}, \eout^{(n)}; \hat{e}_{1}^{(n)}, \ldots, \hat{e}_{N'}^{(n)})$ be simply-connected subgraphs of $\delta_n \Z^2$, where $\delta_n \small{ \stackrel{n \to \infty}{\longrightarrow} } 0$, with two marked boundary edges and $N'$ marked boundary-neighbouring edges. Assume that, as planar domains with marked boundary points, $\Gr_n$ are uniformly bounded and converge in the Carath\'{e}odory sense to a domain $(\domain; \pin, \pout; \hat{p}_1 \ldots, \hat{p}_{N'})$ with $(2 +N')$ distinct marked prime ends. Assume also that the boundary of both $\Gr_n$ and $\domain$ is locally a straight horizontal or vertical line in some fixed neighbourhoods of the boundary-visit locations $\hat{e}_{1}^{(n)}, \ldots, \hat{e}_{N'}^{(n)}$ and $\hat{p}_1 \ldots, \hat{p}_{N'}$.

Let $\confmap_n: \domain_n \to \bH$ and $\confmap: \domain \to \bH$ conformal maps such that $\confmap_n^{-1} \to \confmap^{-1}$ uniformly over compact subsets of $\bH$. 
Denote $\confmap (\pin, \pout; \hat{p}_1 \ldots, \hat{p}_{N'}) =(X^{(\mathrm{in})}_0, X^{(\mathrm{out})}_0 ; \hat{X}^{(1)}_0,  \ldots  \hat{X}^{(N')}_0)$, and assume that $\confmap$ is chosen so that these prime ends of $\bH$ are all real (finite). Fix a localization neighbourhood $U$ of $X^{(\mathrm{in})}_0$ bounded away from the remaining marked boundary points $ X^{(\mathrm{out})}_0, \hat{X}^{(1)}_0,  \ldots  \hat{X}^{(N')}_0$.

Consider now WST boundary-to-boundary branch from $\ein^{(n)}$ to $\eout^{(n)}$ on $\Gr_n$, conditioned to visit boundary at $\hat{e}_{1}^{(n)}, \ldots, \hat{e}_{N'}^{(n)}$ in the (possible) order $\omega$. Map this branch conformally to $\bH$ by the map $\confmap_n$ above. Let $W^{(n)}_\cdot$ denote the driving functions in the Loewner evolutions describing the growth of the boundary-to-boundary branch starting from $\ein^{(n)}$ and stopped at the continuous modification $\tau^{(n)}$ of the exit time of $U$.

\begin{thm}
\label{thm: bdry visit convergence}
In the setup and notation above, $W^{(n)}_\cdot$ converge weakly to the SLE type driving function~\eqref{eq: defn of ptt fcn SLE}, stopped at $\tau$, with parameter $\kappa = 2$, and partition function $\zeta_\omega = \zeta_\omega (W_t, X^{(\mathrm{out})}_t, \hat{X}^{(1)}_t,  \ldots  \hat{X}^{(N')}_t) $ as given in~\cite[Theorem~1.1]{KKP}.
\end{thm}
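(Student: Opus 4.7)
The proof will mirror that of Theorem~\ref{thm: main result}, replacing the $N$-branch conditioning by a boundary-visiting conditioning of a single chordal UST branch. The three ingredients are: (a) a discrete Girsanov transform producing a bounded discrete martingale under the boundary-visiting UST measure $\PR^{(n)}_\omega$, (b) the convergence of this observable to a continuous martingale under any subsequential weak limit $\limPR_\omega$ of $W^{(n)}_\cdot$ (whose existence is granted by the precompactness theory of~\cite{KS, mie, mie2}), and (c) an It\^o-calculus identification of the driving function following Subsection~\ref{subsubsec: ito exercise} essentially verbatim.

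For (a), let $Z^{\Gr_n}_\omega$ be the UST probability that the branch from $\ein^{(n)}$ to $\eout^{(n)}$ visits $\hat{e}_1^{(n)}, \ldots, \hat{e}_{N'}^{(n)}$ in the order $\omega$. A Fomin/Kirchhoff-type determinant argument analogous to Theorem~\ref{thm: disc connectivity probas} --- with each boundary-neighbouring edge $\hat{e}_i$ handled by local reflection, exactly as a pair of boundary edges is treated in~\cite{KKP} --- should express $Z^{\Gr_n}_\omega$ as a linear combination of products of $\ExcK^{(n)}$ evaluated at the marked boundary edges and of discrete boundary normal derivatives of $\ExcK^{(n)}$ at each $\hat{e}_i^{(n)}$. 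The discrete domain Markov property along the growing branch then yields the $\PR^{(n)}_1$-martingale $Z^{\Gr_t}_\omega / Z^{\Gr_t}_1$, and the second martingale transform of Lemma~\ref{lem: discrete Girsanov transforms} (whose proof is insensitive to the precise conditioning event) converts the one-branch martingale~\eqref{eq: classical LERW mgale} into a bounded $\mathcal{F}_t$-martingale
\[
M^{(n;\omega)}_t(v) \; = \; \frac{ \PoissonK^{\Gr_t}(v, \ein^{(t)}) }{ \tilde{Z}^{\Gr_t}_\omega } \cdot (\text{Poisson-kernel factor fixed at } t{=}1)
\]
under $\PR^{(n)}_\omega$, stopped at the first time the remainder of the visit ordering becomes impossible.

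For (b), each $\PoissonK^{(n)}$ or $\ExcK^{(n)}$ factor in $Z^{\Gr_t}_\omega$ is handled by Theorem~\ref{prop: disc harm conv results} after suitable renormalization by Poisson kernels at a fixed reference point, just as in Section~\ref{sec: obs conv}; the straight-boundary assumption near each $\hat{p}_i$ is exactly what allows one to upgrade the kernel convergence to the convergence of the relevant discrete boundary normal derivatives, so that in the scaling limit the normalized $Z^{\Gr_t}_\omega$ yields precisely $\zeta_\omega(W_t, X^{(\mathrm{out})}_t, \hat{X}^{(1)}_t, \ldots, \hat{X}^{(N')}_t)$ of~\cite[Theorem~1.1]{KKP}. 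A uniform convergence statement modeled on Proposition~\ref{prop: unif conv of mgale obs} then promotes $M^{(n;\omega)}_t$ to a continuous bounded $\mathscr{F}_t$-martingale
\[
\Mart^{(\omega)}_t(z) \;\propto\; \frac{ \PoissonKH(g_t(z), W_t) }{ \zeta_\omega(W_t, X^{(\mathrm{out})}_t, \hat{X}^{(1)}_t, \ldots, \hat{X}^{(N')}_t) }
\]
under $\limPR_\omega$, times an $\mathscr{F}_0$-measurable factor analogous to the one in~\eqref{eq: cts mgale}.

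For (c), repeat the It\^o argument of Subsection~\ref{subsubsec: ito exercise} with $\PartF_\star$ replaced by $\zeta_\omega$: writing $f = 1/[(z_t-W_t)\,\zeta_\omega \prod_i g_t'(\hat{X}^{(i)}_0)]$, the imaginary part $\Im f$ is a bounded martingale simultaneously for a dense set of probes $z \in \bH \setminus U_\epsilon$; Lemma~\ref{lem: semimg dr fcn} then upgrades $W$ to a semimartingale; and matching the coefficients of $(z_t-W_t)^{-2}$ and $(z_t-W_t)^{-1}$ in $\ud f$ pins down $\ud \langle W, W \rangle_t = 2\,\ud t$ and the drift $2(\partial_\mathrm{in} \log \zeta_\omega)\,\ud t$. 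I expect the main obstacle to be the discrete step in (a)--(b): identifying the correct determinantal formula for $Z^{\Gr_n}_\omega$ and proving convergence of the associated discrete boundary normal derivatives at the $\hat{e}_i^{(n)}$. The straight-boundary hypothesis near each $\hat{p}_i$, together with the positivity of $\zeta_\omega$ in its relevant chamber from~\cite{KKP}, is what should make this step go through and simultaneously guarantee the nonvanishing of $\partial_{W_t}\Im f$ required for the Implicit function theorem argument in Lemma~\ref{lem: semimg dr fcn}.
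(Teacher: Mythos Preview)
Your overall three-step plan matches the paper's proof, and steps~(a)--(c) are the right scaffold. There is, however, a genuine error in the form of your continuous martingale and of $f$: the boundary-visit points $\hat X^{(i)}$ carry conformal weight $8/\kappa-1=3$ at $\kappa=2$, not weight~$1$, so the correct scaling-limit martingale is
\[
\Mart^{(\omega)}_t(z,\omega')\ \propto\ \frac{\PoissonKH(g_t(z),W_t)}{\zeta_\omega(W_t,X^{(\mathrm{out})}_t,\hat X^{(1)}_t,\ldots,\hat X^{(N')}_t)\,g_t'(X^{(\mathrm{out})}_0)}\prod_{i=1}^{N'}\frac{1}{g_t'(\hat X^{(i)}_0)^{\,3}},
\]
and correspondingly $f=1/\big[(z_t-W_t)\,\zeta_\omega\,g_t'(X^{(\mathrm{out})}_0)\prod_i g_t'(\hat X^{(i)}_0)^{3}\big]$. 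This is not a cosmetic slip: with power~$1$ your $\Im f$ is \emph{not} a martingale under $\limPR_\omega$, because the drift at order $(z_t-W_t)^{0}$ then contains $\sum_i 2/(\hat X^{(i)}_t-W_t)^2$ rather than $\sum_i 6/(\hat X^{(i)}_t-W_t)^2$, and hence does not cancel against the PDE actually satisfied by $\zeta_\omega$ (which has coefficients~$6$ at the visit points, see~\cite[Theorem~1.1]{KKP}). Equivalently, in step~(b) the discrete observable converges to the cube-power expression, not to yours; the factor $\delta^{3N'}$ in the normalization of $Z_\omega$ (each boundary-neighbouring edge splits into two boundary edges under the bijection of~\cite[Lemma~3.2]{KKP}) is exactly what produces the exponent~$3$. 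You also dropped the factor $g_t'(X^{(\mathrm{out})}_0)^{-1}$ for the target endpoint. Once these powers are corrected, your step~(c) goes through verbatim: the $k=2$ and $k=1$ coefficients are unaffected by the $g_t'$ exponents and still yield $\ud\langle W,W\rangle_t=2\,\ud t$ and drift $2(\partial_{\xin}\log\zeta_\omega)\,\ud t$.

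On the combinatorial side, your proposal to compute $Z^{\Gr_n}_\omega$ directly via a Fomin-type determinant with ``local reflection'' at each $\hat e_i$ is workable but heavier than what the paper does: it simply invokes the bijection $Z_\omega(\ein,\eout;\hat e_1,\ldots,\hat e_{N'})=Z_{\alpha(\omega)}(\tilde e_1,\ldots,\tilde e_{2N})$ from~\cite[Lemma~3.2]{KKP}, which reduces everything to the already-proven multiple-branch case with $N=N'+1$ and a specific link pattern $\alpha(\omega)$. The martingale in~(a) is then literally the $\PR_\alpha$-martingale of Proposition~\ref{prop: discrete mgales collected}, and the observable convergence in~(b) is precisely~\cite[Theorem~1.1]{KKP} (where the cube power and the straight-boundary hypothesis are used). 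Your route would re-derive the same determinantal formula and the same scaling limit, but the bijection shortcut avoids redoing that analysis.
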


The scaling limit above can interpreted as the initial segment of $\SLE(2)$ in $(\bH; X^{(\mathrm{in})}_0, X^{(\mathrm{out})}_0)$, conditioned to visit $\hat{X}^{(1)}_0, \ldots, \hat{X}^{(N')}_0$ in the order $\omega$, see Appendix~\ref{app: bdry vis SLE}.

\subsection{The combinatorial model}

Let us again start from the combinatorial solution. Consider the UST measure $\PR$ on $\Gr$, a simply-connected subgraph of $\Z^2$  (equipped with a choice of boundary vertices). Denote by
\begin{align*}
Z_\omega ( \ein, \eout; \hat{e}_{1}, \ldots, \hat{e}_{N'} ) 
\end{align*}
the probability that the boundary branch from $\ein^\circ$ reaches $\bdry \Vert$ via $\eout$ and visits $\hat{e}_{1}^{(n)}, \ldots, \hat{e}_{N'}^{(n)}$ in the order $\omega$. There is a bijection between spanning trees satisfying this condition, and spanning trees with $N=(N'+1)$ boundary-to-boundary branches that form a link pattern $\alpha = \alpha(\omega) \in \LP_N$ between the boundary edges $\tilde{e}_{1}, \ldots, \tilde{e}_{2N} $ obtained by re-labelling $ \ein$, $\eout$, and the $2N'$ boundary edges adjacent to $\hat{e}_{1}, \ldots, \hat{e}_{N'}$. Informally, the bijection is simply obtained by ``cutting the boundary-visiting branch at each visit'', see Figure~\ref{fig: bdry visits bijection}. For a formal description, see~\cite[Lemma~3.2]{KKP}.

\begin{figure}
\centering
\begin{overpic}[width=0.4\textwidth]{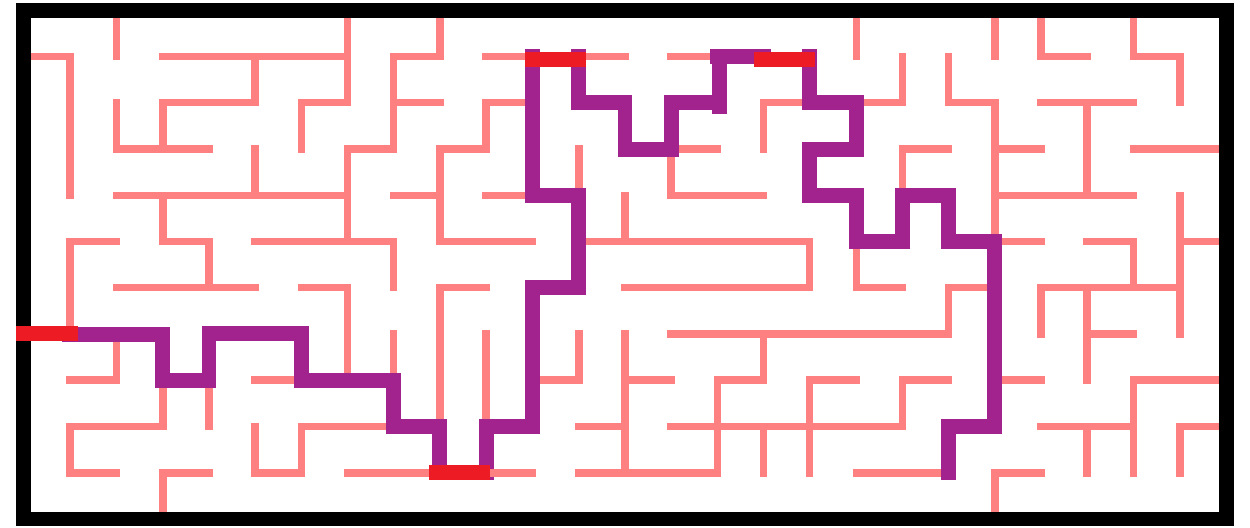}
 \put (-10,15) {\large $\eout$}
 \put (35,-6) {\large $\hat{e}_3$}
  \put (73,-5.5) {\large $\ein$}
 \put (61,44) {\large $\hat{e}_1$}
\put (42,44) {\large $\hat{e}_2$}
\end{overpic}
\qquad \qquad
\begin{overpic}[width=0.4\textwidth]{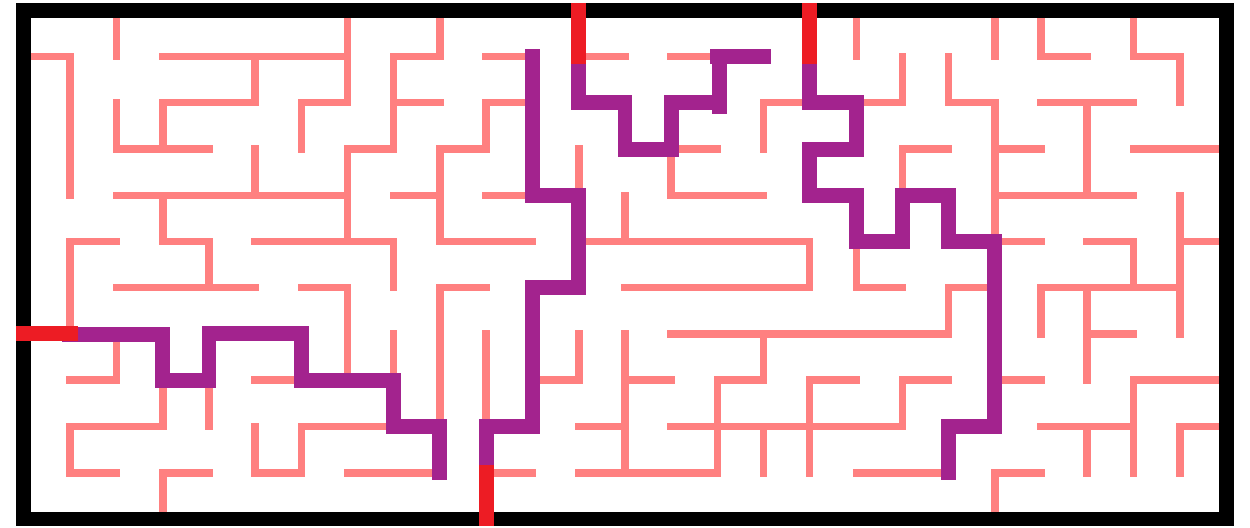}
 \put (-6,15) {\large $\tilde{e}_6$}
 \put (32,-6) {\large $\tilde{e}_7 \tilde{e}_8$}
  \put (73,-6) {\large $\tilde{e}_1$}
 \put (58,44) {\large $\tilde{e}_3 \tilde{e}_2$}
\put (39,44) {\large $\tilde{e}_5 \tilde{e}_4$}
\end{overpic}%
\caption{
Left: A UST sample with a boundary-to-boundary branch between the boundary edges $e_1 = \ein$ and $e_2 = \eout$, visiting boundary at the boundary-neighbouring edges $\hat{e}_1, \hat{e}_2, \hat{e}_{3}$ in that order. Left and right: An illustration of the bijection between spanning trees with a boundary-visiting boundary-to-boundary branch and those with multiple boundary-to-boundary branches.}
\label{fig: bdry visits bijection}
\end{figure}

In particular, we have
\begin{align}
\label{eq: disc bdry vis ptt fcn}
Z_\omega ( \ein, \eout; \hat{e}_{1}, \ldots, \hat{e}_{N'} ) = Z_{ \alpha(\omega) } ( \tilde{e}_{1}, \ldots, \tilde{e}_{2N} ).
\end{align}
and also the initial segment of the boundary-visiting branch coincides with that of a suitable branch in the link pattern $\alpha (\omega) $ between $\tilde{e}_{1}, \ldots, \tilde{e}_{2N}$. 
Let us hence study the UST measure conditional on the multiple branches forming $\alpha(\omega)$ between $\tilde{e}_{1}, \ldots, \tilde{e}_{2N}$, denoted $\PR_\alpha$. A discrete martingale observable under $\PR_\alpha$ is given Proposition~\ref{prop: discrete mgales collected},
\begin{align}
\label{eq: bdry vis disc mgale}
M^{(\alpha)}_t(v, u ) &= \frac{\PoissonK^{\Gr_{t } } (v, \ein^{(t)} )  }{ \tilde{Z}^{ \Gr_{t } }_\alpha }  \PoissonK (u, \eout )  \qquad \text{stopped at $T$}.
\end{align}
The same martingale observable could be found directly under the boundary visiting branch, by modifying the proof of Proposition~\ref{prop: discrete mgales collected} so that martingales are transformed from a single branch to a boundary-visiting branch, and then using~\eqref{eq: disc bdry vis ptt fcn}.

\subsection{Observable convergence}

The expression for the discrete martingale observable~\eqref{eq: bdry vis disc mgale} and its scaling limit were studied in~\cite{KKP}: in the notation of Theorems~\ref{prop: disc harm conv results} and~\ref{thm: bdry visit convergence}
\begin{align*}
\delta^{3N'} & \frac{\PoissonK^{\Gr_{t } } (v, \ein^{(t)} )  }{ \tilde{Z}^{ \Gr_{t } }_\alpha }  \PoissonK (u, \eout ) 
\stackrel{ \delta \to 0 }{\longrightarrow}  
\left( \prod_{i=1}^{N'} g_t' ( \hat{X}^{(i)}_0 )^{-3} \vert \confmap'( \hat{p}_i ) \vert^{-3} \right) \frac{\PoissonKH (g_t ( z ), W_t )  }{  \zeta_\omega (W_t, X^{(\mathrm{out})}_t, \hat{X}^{(1)}_t,  \ldots  \hat{X}^{(N')}_t)  }   \frac{\PoissonKH ( w, X^{(\mathrm{out})}_0 ) }{ g_t ' ( X^{(\mathrm{out})}_0  ) },
\end{align*}
where $\confmap_\delta (v) \to z$ and  $\confmap_\delta (u) \to w$ as $\delta \to 0$, and $\zeta_\omega $ is the function given in~\cite[Theorem~1.1]{KKP}. (To be very precise, we need to adapt~\cite[Theorem~1.1]{KKP} a little bit to allow potentially rough boundaries at $\ein$ and $\eout$ above. This follows however by a simple application Theorem~\ref{prop: disc harm conv results}(iv).)

\subsection{Precompactness}

The precompactness of the multiple branches in the link pattern $\alpha (\omega) $ is proven identically to the precompactness part in the main theorem~\ref{thm: main result}, see Section~\ref{subsubsec: precompactness}. Note that in the main theorem, all the endpoints of WST branches tend to different limiting prime ends, while this is not the case here. Nevertheless, the precompactness conditions of~\cite{mie2}, checked in Section~\ref{subsubsec: precompactness}, guarantee precompactness even if some limiting prime ends coincide, see~\cite[Section~4.1]{mie2}.

\subsection{Continuous martingales in the scaling limit}

Extract now a subsequential weak limit driving function $W$. Repeating the arguments of Section~\ref{subsec: cts mgales} in verbatim, one observes that 
\begin{align}
\label{eq: bdry visits - cts mgale}
\frac{\PoissonKH (g_t ( z ), W_t )  }{  \zeta_\omega (W_t, X^{(\mathrm{out})}_t, \hat{X}^{(1)}_t,  \ldots  \hat{X}^{(N')}_t)  g_t ' ( X^{(\mathrm{out})}_0  ) }  \prod_{i=1}^{N'} g_t' ( \hat{X}^{(i)}_0 )^{-3} , \qquad \text{stopped at } \tau
\end{align}
is a bounded continuous martingale for all $z \in \bH \setminus U_\eps$.

\subsection{Identifying the scaling limit}

With a blue copy of Lemma~\ref{lem: semimg dr fcn} (but this time based on the martingale~\eqref{eq: bdry visits - cts mgale}), one proves that the driving function $W$ is a semimartingale.

We now finish the proof outline of Theorem~\ref{thm: bdry visit convergence} by identifying the law of $W$ with an explicit computation that closely resembles the proof Theorem~\ref{thm: main result} in Section~\ref{subsubsec: ito exercise}. We start by defining, analogously to~\eqref{eq: process f}, the process
\begin{align}
\label{eq: process f2}
f &(z_t; W_t; (\hat{X}^{(i)}_t)_{i \leq N'}; \hat{X}^{( \mathrm{out} )}_t; (g_t' ( \hat{X}^{(i)}_0 ))_{i \leq N'}; g_t' ( \hat{X}^{( \mathrm{out} )}_0 )) \\
\nonumber
& :=  \frac{ 1 }{ (z_t- W_t ) \zeta_\omega (W_t, X^{(\mathrm{out})}_t, \hat{X}^{(1)}_t,  \ldots  \hat{X}^{(N')}_t)  g_t ' ( X^{(\mathrm{out})}_0  )  } \prod_{  i = 1  }^{N'} \frac{1}{ g_t' ( \hat{X}^{(i)}_0 )^{3} } , \qquad \text{stopped at } \tau,
\end{align}
whose imaginary part $\Im f$ coincides with the martingale~\eqref{eq: bdry visits - cts mgale} up to a constant factor.

By It\^{o}'s theorem, we have for $t < \tau$ (omitting the arguments of $f$)
\begin{small}
\begin{align}
\nonumber
\ud f =
& \frac{1}{2} \partial^2_{W_t} f \ud \langle W, W \rangle_t + 
\partial_{W_t} f \ud W_t + \partial_{z_t} f \ud z_t 
\\
\nonumber
&
+\left( \sum_{i \leq N'} \partial_{ \hat{X}^{(i)}_t} f \ud \hat{X}^{(i)}_t + \partial_{ X^{( \mathrm{out} )}_t} f \ud \hat{X}^{(\mathrm{out} )}_t \right)
+ \partial_{g_t'( \hat{X}^{( \mathrm{out} )}_0) } f \ud (g_t'( \hat{X}^{( \mathrm{out} )}_0))_t
\\
\label{eq: Ito formula 2}
&
+ \sum_{i \leq N'} \partial_{g_t'( \hat{X}^{(i)}_0) } f \ud (g_t'( \hat{X}^{(i)}_0))_t,
\end{align}
\end{small}%
and the drift part of $\Im (\ud f )$ should vanish by the martingaleness of $\Im f$.

The five first terms, on the two first lines of~\eqref{eq: Ito formula 2}, yield, identically to the five terms in the It\^{o} differential~\eqref{eq: Ito formula}, 
\begin{small}
\begin{align}
\label{eq: 2 stoch D 1}
\partial_{W_t} f \ud W_t &= \left( \frac{ 1 }{ z_t- W_t  }  -  \frac{ \partial_{\xin}  \zeta_\omega  }{  \zeta_\omega } \right) f \ud W_t \\
\label{eq: 2 stoch D 2}
\frac{1}{2} \partial_{W_t}^{2} f  \ud \langle W, W \rangle_t &= \frac{1}{2}  \left( \frac{ 2 }{ (z_t- W_t )^2 }  + 2  \left( \frac{ \partial_{\xin}  \zeta_\omega  }{  \zeta_\omega  } \right)^2  - \frac{ 2 \partial_{\xin}  \zeta_\omega }{ (z_t- W_t ) \zeta_\omega } - \frac{ \partial_{\xin}^2  \zeta_\omega  }{ \zeta_\omega } \right) f  \ud \langle W, W \rangle_t  \\
\label{eq: 2 stoch D 3}
\partial_{z_t} f \ud z_t &= - \frac{ 1 }{ z_t- W_t  } f \frac{ 2 }{ z_t- W_t  } \ud t \\
\label{eq: 2 stoch D 4}
\partial_{X^{(  \mathrm{out}  )}_t} f \ud X^{(  \mathrm{out} )}_t &= - \frac{ \partial_{ \xout }  \zeta_\omega }{ \zeta_\omega  } f  \frac{ 2 }{ X^{(  \mathrm{out}  )}_t - W_t  } \ud t  \quad \text{and} \quad
\partial_{\hat{X}^{(i)}_t} f \ud \hat{X}^{(i)}_t = - \frac{ \partial_{ \hat{x}_i }  \zeta_\omega }{ \zeta_\omega  } f  \frac{ 2 }{ \hat{X}^{(i)}_t - W_t  } \ud t 
\\
\label{eq: 2 stoch D 5}
\partial_{g_t'(X^{( \mathrm{out} )}_0 ) } f \ud (g_t'(X^{( \mathrm{out} )}_0 ))_t &= \frac{ 2 }{ (X^{( \mathrm{out} )}_t - W_t  )^2 } f \ud t,
\end{align}
\end{small}%
while the last term of~\eqref{eq: Ito formula 2} becomes
\begin{small}
\begin{align}
\label{eq: 2 stoch D 6}
\partial_{g_t'( \hat{X}^{(i)}_0) } f \ud (g_t'( \hat{X}^{(i)}_0))_t &= \frac{ 6 }{ ( \hat{X}^{(i)}_t - W_t  )^2 } f \ud t.
\end{align}
\end{small}%

As in the proof Theorem~\ref{thm: main result} in Section~\ref{subsubsec: ito exercise}, one now argues that the real drift differentials multiplying the complex number $1/(z_t - W_t)^k$ must cancel out in~\eqref{eq: Ito formula 2} for each $k$ individually. Terms of the form $f/(z_t- W_t )^2$ come from~\eqref{eq: 2 stoch D 2} and~\eqref{eq: 2 stoch D 3}, and yield
\begin{align}
\label{eq: 2 mgale part}
  \ud \langle W, W \rangle_t  &= 2 \ud t,
\end{align} 
Terms  $f/(z_t- W_t )$ appear in~\eqref{eq: 2 stoch D 1} and~\eqref{eq: 2 stoch D 2} and yield
\begin{align}
\label{eq: 2 drift part}
\ud [ \text{drift part of }W ]_t  &= 2 \frac{  \partial_{\xin} \zeta_\omega   }{ \zeta_\omega  } \ud t.
\end{align}
Equations~\eqref{eq: 2 mgale part} and~\eqref{eq: 2 drift part} identify the semimartingale $W$ as the solution to
\begin{align*}
\ud W_t &= \sqrt{ 2 } \ud B_t + 2 \frac{ ( \partial_{\xin} \zeta_\omega )  (W_t, X^{(\mathrm{out})}_t, \hat{X}^{(1)}_t,  \ldots  \hat{X}^{(N')}_t)  }{ \zeta_\omega (W_t, X^{(\mathrm{out})}_t, \hat{X}^{(1)}_t,  \ldots  \hat{X}^{(N')}_t)  } \ud t,
\end{align*}
i.e., $W_t$ is the $\SLE(2)$ driving function with the partition function $\zeta_\omega$. This proves Theorem~\ref{thm: bdry visit convergence}.

Terms of the form $f/(z_t- W_t )^0$ in~\eqref{eq: Ito formula 2} provide a nice double-check: the come from~\eqref{eq: 2 stoch D 1},~\eqref{eq: 2 stoch D 2},~\eqref{eq: 2 stoch D 4},~\eqref{eq: 2 stoch D 5}, and~\eqref{eq: 2 stoch D 6}, and yield
\begin{small}
\begin{align*}
-  \frac{ \partial_{\xin}  \zeta_\omega  }{  \zeta_\omega } f \ud [ \text{drift part of }W ]_t 
+
\frac{1}{2} \left(  2  \left( \frac{ \partial_{\xin}  \zeta_\omega  }{  \zeta_\omega  } \right)^2  - \frac{ \partial_{\xin}^2  \zeta_\omega  }{ \zeta_\omega } \right) f  \ud \langle W, W \rangle_t 
 -
 \frac{ \partial_{ \xout }  \zeta_\omega }{ \zeta_\omega  } f  \frac{ 2 }{ X^{(  \mathrm{out}  )}_t - W_t  } \ud t
 & \\
 - \sum_{i \leq N'} \frac{ \partial_{ \hat{x}_i }  \zeta_\omega }{ \zeta_\omega  } f  \frac{ 2 }{ \hat{X}^{(i)}_t - W_t  } \ud t 
+
\frac{ 2 }{ (X^{( \mathrm{out} )}_t - W_t  )^2 } f \ud t
+
 \sum_{i \leq N'}  \frac{ 6 }{ ( \hat{X}^{(i)}_t - W_t  )^2 } f \ud t
& = 0 
\end{align*}
\end{small}%
Similarly to the proof of Theorem~\ref{cor: CFT degeneracy PDEs} in Section~\ref{subsubsec: ito exercise}, one deduces that $\zeta_\omega $ must satisfy the PDE
\begin{align}
\nonumber
- \partial_{\xin}^2  \zeta_\omega (\xin, \xout; \hat{x}_1, \ldots, \hat{x}_{N'})
-
\frac{ 2 }{ \xout - \xin  } \partial_{ \xout }  \zeta_\omega (\xin, \xout; \hat{x}_1, \ldots, \hat{x}_{N'}) & \\
\nonumber
 - \sum_{i \leq N'} \frac{ 2 }{ \hat{x}_i - \xin  }  \partial_{ \hat{x}_i }  \zeta_\omega  (\xin, \xout; \hat{x}_1, \ldots, \hat{x}_{N'})  
 + \frac{ 2 }{ (\xout -  \xin   )^2 }  \zeta_\omega  (\xin, \xout; \hat{x}_1, \ldots, \hat{x}_{N'}) & \\
\label{eq: bdry vis PDE}
 + \sum_{i \leq N'} \frac{ 6 }{ ( \hat{x}_i - \xin )^2 }   \zeta_\omega  (\xin, \xout; \hat{x}_1, \ldots, \hat{x}_{N'})  & = 0.
\end{align}
The exact same PDE for $\zeta_\omega$ was proven in~\cite[Theorem~1.1]{KKP} with a completely different method.

We conclude by remarking that the core of~\cite[Theorem~1.1]{KKP}, certain third order PDEs of Conformal field theory for $\zeta_\omega$, do \textit{not} arise from this probabilistic study, as anticipated in~\cite{KKP}.


\bigskip

\appendix

\section{On the boundary behaviour of discrete harmonic functions and the precompactness of WST branches}
\label{app: disc harm}

The main result of this appendix is Theorem~\ref{thm: ratios of discrete harmonic functions converge}, relating the boundary behaviour of discrete harmonic functions to that of the continuous ones. The main observable convergence result of this paper, Theorem~\ref{prop: disc harm conv results}, is a simple application of Theorem~\ref{thm: ratios of discrete harmonic functions converge}. A key ingredient the proof of Theorem~\ref{thm: ratios of discrete harmonic functions converge} is a Beurling type estimate for random walk \emph{excursions} from~\cite{KS}, recalled in Proposition~\ref{lem: crossing an annulus is a uniformly nontrivial event}. Interestingly, this proposition also constitutes the proof of precompactness of the multiple WST boundary-to-boundary branches.

\subsection{Discrete harmonic functions}
\label{subsec: basic notions of disc harm fcns}

Consider for a moment the setup of Section~\ref{sec: connectivity probas}, i.e., $\Gr=(\Vert, \Edg)$ is a finite connected planar graph with a planar embedding, a choice of boundary vertices, and edge weights $\EdgeWeight$. The \emph{discrete Laplacian} $\Delta$ maps a function $f: \Vert \to \R$ on the vertices to another function $\Delta f: \Vert \to \R$ given by
\begin{align*}
\Delta f (v) = 
\sum_{e = \edgeof{v}{u} \in \Edg} \EdgeWeight(e)(f(u)-f(v)).
\end{align*}
The function $f$ is \emph{discrete harmonic} is $\Delta f (v) = 0$ for all interior vertices $v \in \Vert^\circ$.

Recall the definitions of the discrete Green's function and Poisson and excursion kernels from Section~\ref{subsubsec: discrete kernels}. Note that the Green's function $\GreenK (\cdot, w)$, interpreted as a function $\Vert \to \R$ with fixed $w \in \Vert$, satisfies $\GreenK (v, w) = 0$ for all $v \in \bdry \Vert $, and for $v \in \Vert^\circ$,
\begin{align*}
\Delta \GreenK (v, w) = - \delta_w (v) = 
\begin{cases}
-1, \qquad v = w \\
0, \qquad v \neq w.
\end{cases}
\end{align*}
By linearity, Poisson problems of the discrete Laplacian can thus be solved in terms of $\GreenK (\cdot, w)$.

For an interior vertex $v \in \Vert^\circ$ and a set of boundary edges $A \subset \bdry \Edg$, we define the \emph{discrete harmonic measure of $A$ as seen from $v$}, denoted by $\HarmMeas_\Gr (v; A)$, as the probability that random walk on $\Vert$ with edge weights $\EdgeWeight$, launched from $v \in \Vert$, first reaches $\bdry \Vert$ via an edge of $A$.
All harmonic measures can be expressed in terms of Poisson kernels; if $A=\{ e \} $ consists of a single boundary edge, then
\begin{align*}
\HarmMeas_\Gr (v; \{ e \}) = \EdgeWeight(e) \PoissonK (v, e),
\end{align*}
and otherwise $\HarmMeas_\Gr (v; A) = \sum_{e \in A} \HarmMeas_\Gr (v; \{ e \})$.

Assume now that all boundary edges of $\Gr$ link to different boundary vertices (or modify $\Gr$ accordingly). The harmonic measure $\HarmMeas_\Gr (v; A)$ (or the Poisson kernel) can be regarded a discrete harmonic function on $\Gr$ by extending it to $w \in \bdry \Vert$ by setting $\HarmMeas_\Gr (w; A) = \mathbbm{1} \{ w $ adjacent to $A\}$. (This follows as Poisson kernel is a Green's function and thus harmonic except at one vertex.) This justifies the term harmonic measure, and is crucial for the scaling limit analysis of Poisson kernels.

\subsection{A Beurling type estimate for random walk excursions}

The main result of this subsection, Proposition~\ref{lem: crossing an annulus is a uniformly nontrivial event}, is central in the proof of Theorem~\ref{thm: ratios of discrete harmonic functions converge} and implies directly the precompactness of the WST boundary-to-boundary branches. It was first given in~\cite[Section 4.5]{KS} in the latter purpose.
Due to its double importance, we recall the argument of~\cite{KS} here.
Note that in the results in this subsection are \emph{uniform} over all simply-connected subgraphs of isoradial lattices (possibly different lattices, satisfying~\ref{eq: unif bound on half-angles}), and do not depend on the mesh size.

\subsubsection{Random walks and isoradial balls and quadrilaterals}

Let $\InfiniteGr$ be an isoradial lattice with the isoradial edge weights $\EdgeWeight$. We will denote the probability measure of the $\EdgeWeight$-weighted random walk on $\InfiniteGr$, launched from $v \in \Vert (\InfiniteGr)$, by $\PR_{v}$. Let $\Gr = (\Vert, \Edg)$ be a simply-connected subgraph of $\InfiniteGr$. A \emph{random walk excursion} from an interior vertex $v \in \Vert^\circ$ to some boundary edges $A \subset \bdry \Edg$ on $\Gr$ is a random walk launched from $v$, stopped upon hitting the boundary vertices $\bdry \Vert$, and conditioned to first reach $\bdry \Vert$ via an edge of $A$. We will denote the underlying random walk measure with this conditioning by $\PR^\Gr_{\pathfromto{v}{A} }$.

We define the \emph{discrete ball $B_\InfiniteGr(u, \rho)$ around a vertex} $u \in \Vert(\InfiniteGr)$, for any $\rho > 0$, as the following simply-connected isoradial graph. Take the largest simple loop $\ell$ on the dual lattice $\InfiniteGr^*$ such that the primal vertices inside $\ell$ include $u$ and are contained in the (continuous) ball $B(u,\rho)$. The primal vertices inside $\ell$ are the interior vertices of $B_\InfiniteGr(u, \rho)$, all their other neighbours in $\InfiniteGr$ are the boundary vertices of $B_\InfiniteGr(u, \rho)$, and the edges of $B_\InfiniteGr(u, \rho)$ are those of $\InfiniteGr$ between the vertices of $B_\InfiniteGr(u, \rho)$. We will later refer to a simply-connected isoradial subgraphs constructed via a dual loop $\ell$, similarly to the above, as the \emph{subgraph of $\InfiniteGr$ determined by $\ell$}.

Let $\Gr = (\Vert, \Edg)$ be a simply-connected isoradial subgraph of $\InfiniteGr$. By an \emph{isoradial quadrilateral $Q$ on the boundary of $\Gr$}, we mean the following topological quadrilateral. We take two disjoint simple paths $\ell$ and $\ell'$ on the dual isoradial lattice $\InfiniteGr^*$, both crossing the boundary of $\Gr$ (as viewed as a continuous domain) by their first and last edges and otherwise staying inside the domain $\Gr$. We also require that there exists a path on the graph $\Gr$ crossing both $\ell$ and $\ell'$ and staying on the interior vertices $\Vert^\circ$ except for possibly at its endpoints. Then, the planar domain $Q$ is\footnote{
The definition of $Q$ is here slightly more restrictive than in~\cite{KS}. This is in order to directly comply with assumptions in a result of~\cite{Chelkak-Robust_DCA_toolbox}, used in the proof of Lemma~\ref{lem: Kemppainen lemma}.
}
 the unique connected component of the planar domain $\Gr \setminus (\ell \cup \ell')$ adjacent to both $\ell$ and $\ell'$. Two segments of $\bdry Q$ lie on the arcs $\ell$ and $\ell'$ and two on the boundary of the domain $\Gr$, giving the structure of a topological quadrilateral. We denote by $m(Q)$ the \emph{conformal modulus} of $Q$, i.e., the unique $L>0$ such that $Q$ can be conformally mapped to the rectangle $(0,1) \times (0,L)$, where the top and bottom arcs correspond to $\ell$ and $\ell'$.

Let $v \in \Vert^\circ$ be an interior vertex and $A \subset \bdry \Edg$ a set of boundary edges of $\Gr$. We say that $Q$ is \emph{compatible} with $A$ (resp. $v \cup A$), if all edges of $A$ lie at least half outside of the planar domain $Q$ (resp. and also $v \not \in Q$), and these edges or half-edges of $A \setminus Q$ (resp. and also $v$) all lie in one component of the planar domain $\Gr \setminus Q$. For definiteness, we will assume the $\ell$ separates $\ell'$ from $v$ and the edges or half-edges of $A \setminus Q$.

Consider now the simply-connected subgraph of $\InfiniteGr$ determined by the largest simple loop on the dual $\InfiniteGr^*$, so that the dual loop stays in $Q \cup \ell \cup \ell'$ and intersects both $\ell$ and $\ell'$.
Slightly abusively, we will also denote by $Q$ this subgraph of $\InfiniteGr$ and $\Gr$.
The boundary vertices $\Vert^{\bdry} (Q)$ are naturally divided into four disjoint subsets $S_0, S_1, S_2, S_3$ indexed counterclockwise: $S_0$ and $S_2$ are adjacent to boundary edges crossing the dual paths $\ell$ and $\ell'$, respectively, while $S_1$ and $S_3$ are adjacent to the remaining two arcs of the simple dual loop.
Note that a walk on the interior vertices $\Vert^\circ$ that crosses the domain $Q$ from $\ell$ to $\ell'$ must make the crossing on the (\textit{a priori} smaller) graph $Q$ from $S_0$ to $S_2$.

\subsubsection{A Beurling estimate for random walk excursions}

\begin{lem}
 \label{lem: Kemppainen lemma}
\cite[ Proposition 4.17]{KS} 
There exists $\eps_0 > 0$ such that for any $c > 0$ there exists $M>0$ such that the following holds. Let $\Gr = (\Vert , \Edg)$ be any simply-connected isoradial subgraph with any marked boundary edges $A \subset \bdry \Edg$.
Let $Q$ be an isoradial quadrilateral on the boundary of $\Gr$, compatible with $A$.
Now, if $m(Q) \geq M$, then there exists $u \in \Vert^\circ(Q)$ and $\rho > 0$ such that
\begin{itemize}
\item[i)] $B:= B_\InfiniteGr(u, \rho)$ a subgraph of the graph $ Q$;
\item[ii)] $\min_{x \in  B} \HarmMeas_\Gr (x; A) \ge c \max_{x \in S_2 } \HarmMeas_\Gr (x; A)$; and
\item[iii)] $\PR^{Q}_{ \pathfromto{x}{e} } [$excursion intersects $ B ] \ge \eps_0$ for all $x \in \Vert^\circ(Q)$ adjacent to $S_0$ and $e \in \bdry \Edg (Q)$ adjacent to $S_2$.
\end{itemize}
\end{lem}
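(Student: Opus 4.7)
The strategy is to exploit the largeness of $m(Q)$ to locate a ``bulk ball'' $B$ in the conformal middle of $Q$, uniformly separated from both $\ell$ and $\ell'$. Once $B$ is identified, (ii) and (iii) follow from standard Harnack and Beurling-type crossing estimates for random walks on isoradial graphs.

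First I would carve out a middle sub-quadrilateral. Using the conformal uniformization $\phi \colon Q \to (0,1) \times (0, m(Q))$ sending $\ell, \ell'$ to the horizontal sides, the preimages of the lines $\{y = m(Q)/3\}$ and $\{y = 2m(Q)/3\}$ give two topological cross-cuts of $Q$. By the discrete conformal analysis on isoradial graphs of~\cite{CS-discrete_complex_analysis_on_isoradial}, these can be approximated by dual-lattice paths $\ell_{\mathrm{top}}, \ell_{\mathrm{bot}}$ splitting $Q$ into three isoradial sub-quadrilaterals $Q_{\mathrm{top}}, Q_{\mathrm{mid}}, Q_{\mathrm{bot}}$ of moduli approximately $m(Q)/3$ each. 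In particular, $Q_{\mathrm{mid}}$ has modulus bounded both above and below by constants depending only on $\eta$, so by the uniform angle bound~\eqref{eq: unif bound on half-angles} one can locate $u \in \Vert^\circ(Q_{\mathrm{mid}})$ and $\rho > 0$ for which the discrete ball $B := B_\InfiniteGr(u, \rho)$ lies inside $Q_{\mathrm{mid}} \subset Q$ and is separated from $\bdry Q_{\mathrm{mid}}$ by a definite proportion of $\rho$. This establishes (i).

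For (iii), any excursion $\PR^Q_{\pathfromto{x}{e}}$ with $x$ adjacent to $S_0$ and $e$ adjacent to $S_2$ is, by construction, a path in $Q$ from the $\ell$-side to the $\ell'$-side and so must traverse the middle strip $Q_{\mathrm{mid}}$. Since $Q_{\mathrm{mid}}$ has \emph{bounded} modulus and $B$ lies well inside it, a crossing of $Q_{\mathrm{mid}}$ hits $B$ with uniformly positive probability: for the unconditioned walk this is a quantitative discrete Beurling estimate, available in the isoradial setting with constants depending only on $\eta$~\cite{CS-discrete_complex_analysis_on_isoradial}. Passing from the unconditioned walk to the conditioned excursion $\PR^Q_{\pathfromto{x}{e}}$ is then a Doob-transform argument, controlled by the Harnack principle for the harmonic function $y \mapsto \PR^Q_y[\text{exit at }e]$ on $Q_{\mathrm{mid}}$.

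For (ii), fix $x \in S_2$. Since $\ell$ separates $\ell'$ from both $A$ and $v$, any walk in $\Gr$ from $x$ that exits via $A$ must at some point traverse $Q$ from near $S_2$ to near $S_0$. Decomposing $\HarmMeas_\Gr(x; A)$ at the first hit of $B \cup \bdry \Vert$ via the strong Markov property yields
\[
\HarmMeas_\Gr(x; A) \,\leq\, \PR_x[\tau_A < \tau_B] \;+\; \max_{y \in B} \HarmMeas_\Gr(y; A),
\]
where $\tau_A, \tau_B$ are the first hitting times of $A$ and $B$. Applying a variant of (iii) to the first $Q$-crossing $S_2 \to S_0$ of a walk reaching $A$ (again via Doob conditioning by $\HarmMeas_\Gr(\cdot; A)$-values on $S_0$-adjacent vertices) gives $\PR_x[\tau_A < \tau_B] \leq (1-\eps_0)\, \HarmMeas_\Gr(x; A)$, which after rearrangement bounds $\HarmMeas_\Gr(x; A)$ by $\eps_0^{-1} \max_B \HarmMeas_\Gr(\cdot; A)$. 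The discrete Harnack inequality on $B$~\cite{CS-discrete_complex_analysis_on_isoradial} then converts $\max_B$ to $\min_B$ up to a constant $C_H$, proving (ii) with $c = \eps_0/C_H$. The main obstacle throughout is ensuring the constants $\eps_0, c, M$ are uniform in the lattice and the mesh size: they must depend only on $\eta$ from~\eqref{eq: unif bound on half-angles}, which requires careful appeal to the uniform isoradial Harnack, Beurling, and modulus-comparison estimates of~\cite{CS-discrete_complex_analysis_on_isoradial}.
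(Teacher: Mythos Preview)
Your sketch has the right list of ingredients (Harnack, weak Beurling, a crossing estimate for excursions), but the geometry you set up cannot deliver the quantifier structure of the lemma, and this shows up as a genuine gap in part~(ii).

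First, there is an internal inconsistency in the construction of $B$. You split $Q$ along the preimages of $\{y=m(Q)/3\}$ and $\{y=2m(Q)/3\}$, so by conformal invariance the three pieces have moduli $\approx m(Q)/3$ each; yet in the next sentence you assert that $Q_{\mathrm{mid}}$ has modulus bounded above and below by constants depending only on $\eta$. These two claims are incompatible once $m(Q)\geq M$ is large.

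Second, and more importantly, your argument for (ii) produces only the fixed constant $c=\eps_0/C_H$. The lemma, however, reads: \emph{for any} $c>0$ there exists $M$ such that (ii) holds with that $c$. In other words the ratio $\min_{B}\HarmMeas_\Gr(\cdot;A)/\max_{S_2}\HarmMeas_\Gr(\cdot;A)$ must be made \emph{arbitrarily large} by taking $m(Q)$ large, while $\eps_0$ in (iii) stays fixed. A ball placed ``in the middle third'' can never achieve this: the estimate you run (Markov decomposition at $\tau_B$, then Harnack on $B$) is insensitive to $M$. The correct placement is to put $B$ at \emph{bounded} modulus-distance from $\ell$, so that the sub-quadrilateral between $B$ and $\ell'$ has modulus $\gtrsim M-O(1)$. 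Since $\HarmMeas_\Gr(\cdot;A)$ vanishes on the lateral arcs $S_1, S_3\subset\bdry\Vert\setminus A$, the weak Beurling estimate~\cite[Proposition~2.11]{CS-discrete_complex_analysis_on_isoradial} applied in that long strip gives exponential decay of $\HarmMeas_\Gr(\cdot;A)$ from $B$ down to $S_2$, which is exactly (ii) with $c\to\infty$ as $M\to\infty$. With $B$ at bounded distance from $\ell$, item (iii) with a \emph{uniform} $\eps_0$ then follows from the excursion-crossing estimate~\cite[Proposition~3.3]{Chelkak-Robust_DCA_toolbox} (this is the third input the paper cites, alongside Beurling and Harnack). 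Your Doob-transform/Harnack heuristic for (iii) is in the right spirit but needs that bounded-modulus geometry near $\ell$ to yield a constant independent of $M$.
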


\begin{proof}
The proof in~\cite[Proposition~4.17]{KS} is given for $\InfiniteGr = \Z^2$. 
Their argument, as well as its inputs (the weak Beurling estimate~\cite[Proposition~2.11]{CS-discrete_complex_analysis_on_isoradial},
the Harnack lemma~\cite[Proposition~2.7]{CS-discrete_complex_analysis_on_isoradial}, and~\cite[Proposition~3.3]{Chelkak-Robust_DCA_toolbox}), apply directly to general isoradial graphs $\InfiniteGr$.
\end{proof}

The proof of the following lemma essentially coincides with the proof of~\cite[Theorem 4.18]{KS}, where the statement is however given in terms of loop-erased random walks.

\begin{proposition} 
\emph{(cf. \cite[Theorem 4.18]{KS})}
\label{lem: crossing an annulus is a uniformly nontrivial event}
Continue in the setup and notation of the above lemma, and let $c$ be chosen larger than $1$.  Let $a \in \Vert^\circ$ be any vertex such that $Q$ is compatible with $a \cup A$. Denote by $\tau$, $\tau'$, and $T$ the first times the random walk (trajectory) from $a$ hits $\ell$, $\ell'$, and $\bdry \Vert$, respectively. Then, we have
\begin{align}
\label{eq: crossing condn}
\PR_{ \pathfromto{a}{A} }^{\Gr} [\tau' \leq T \vert \tau \leq T ] \le \frac{1}{\eps_0 (c-1)} \times \PR_{ \pathfromto{a}{A} }^{\Gr} [\tau' > T \vert \tau \leq T ].
\end{align}
In particular, for any $\tilde{\eps} > 0$, by choosing $M$ large enough, having $m(Q) \geq M$ guarantees that
\begin{align}
\label{eq: crossing proba}
\PR_{ \pathfromto{a}{A} }^{\Gr} [\tau' \leq T ] < \tilde{\eps}.
\end{align}
\end{proposition}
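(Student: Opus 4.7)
The plan is to first reduce~\eqref{eq: crossing proba} to~\eqref{eq: crossing condn}, and then prove the latter by a harmonic-measure swap argument. Since $\{\tau' \le T\} \subset \{\tau \le T\}$, we have $\PR^{\Gr}_{\pathfromto{a}{A}}[\tau' \le T] \le \PR^{\Gr}_{\pathfromto{a}{A}}[\tau' \le T \mid \tau \le T]$; combining~\eqref{eq: crossing condn} with $\PR^{\Gr}_{\pathfromto{a}{A}}[\tau' \le T \mid \tau \le T] + \PR^{\Gr}_{\pathfromto{a}{A}}[\tau' > T \mid \tau \le T] = 1$ yields $\PR^{\Gr}_{\pathfromto{a}{A}}[\tau' \le T \mid \tau \le T] \le 1/(1+\eps_0(c-1))$, which can be made below any prescribed $\tilde{\eps}$ by choosing $c$ large (with $M$ adjusted in Lemma~\ref{lem: Kemppainen lemma} and $\eps_0$ uniform). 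Writing out the ratio in~\eqref{eq: crossing condn} and clearing the common denominator $\PR^\Gr_a[\tau \le T, E]$, where $E$ denotes the event that the random walk from $a$ first exits $\Gr$ through an edge of $A$ and $\PR^\Gr_a$ denotes the unconditioned walk on $\Gr$ from $a$, it suffices to prove the unconditional inequality
\begin{align*}
\eps_0(c-1)\cdot \PR^\Gr_a[\tau' \le T, E] \;\le\; \PR^\Gr_a[\tau \le T, \tau' > T, E].
\end{align*}

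I would control both sides by combining the strong Markov property with property~(ii). For the left-hand side, strong Markov at $\tau'$ together with $\PR^\Gr_x[E] = \HarmMeas_\Gr(x;A)$ gives
$\PR^\Gr_a[\tau' \le T, E] = \EX^\Gr_a[\mathbbm{1}\{\tau' \le T\}\,\HarmMeas_\Gr(X_{\tau'};A)] \le \max_{S_2}\HarmMeas_\Gr(\cdot;A)\cdot \PR^\Gr_a[\tau' \le T]$.
For the right-hand side, the same argument applied to a walk from $x \in B$ yields $\PR^\Gr_x[\tau' \le T, E] \le \max_{S_2}\HarmMeas_\Gr(\cdot;A)$; subtracting from $\PR^\Gr_x[E] = \HarmMeas_\Gr(x;A) \ge c\max_{S_2}\HarmMeas_\Gr(\cdot;A)$ (property~(ii)) gives the pointwise bound $\PR^\Gr_x[\tau' > T, E] \ge (c-1)\max_{S_2}\HarmMeas_\Gr(\cdot;A)$ for every $x \in B$. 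Let $\tau_B$ denote the first hitting time of $B$, and note $\{\tau_B < T\} \subset \{\tau \le T\}$ since $B \subset Q^\circ$ is separated from $a$ by $\ell$. Applying strong Markov at $\tau_B$ restricted to $\{\tau_B < \tau' \wedge T\}$ (so that the post-$\tau_B$ event $\{\tau' > T\}$ matches the original),
\begin{align*}
\PR^\Gr_a[\tau \le T, \tau' > T, E] \;\ge\; \PR^\Gr_a[\tau_B < \tau' \wedge T]\cdot(c-1)\max_{S_2}\HarmMeas_\Gr(\cdot;A).
\end{align*}

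Combining the two displays reduces the proof to the geometric inequality $\PR^\Gr_a[\tau_B < \tau' \wedge T] \ge \eps_0 \cdot \PR^\Gr_a[\tau' \le T]$. This is where property~(iii) enters and is the main obstacle, since (iii) concerns excursions in the subgraph $Q$ rather than walks in $\Gr$. The plan is a path decomposition: on the event $\{\tau' \le T\}$, let $\eta$ be the last visit to $S_0$ before $\tau'$ (well-defined, as $a$ and $\ell'$ lie on opposite sides of $\ell$). The walk between $\eta$ and $\tau'$ cannot revisit $S_0$ by maximality of $\eta$, nor can it visit $S_1 \cup S_3$: by the definition of the isoradial quadrilateral $Q$, the two side-boundary arcs of $Q$ lie on $\bdry\Gr$, so any such visit would either trigger $T$ (if $S_1 \cup S_3 \subset \Vert^\bdry(\Gr)$) or be impossible (if those vertices fall outside $\Vert(\Gr)$). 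Thus the sub-walk from $\eta$ to $\tau'$ stays in $\Vert^\circ(Q) \cup S_0 \cup S_2$, and by strong Markov applied at the successive forward visits to $S_0$, the excursion realizing the final crossing is distributed as a walk in $Q$ from a vertex adjacent to $S_0$ up to its first exit of $Q$, conditioned to exit through an edge adjacent to $S_2$. Applying~(iii) conditionally on the endpoints of this excursion and taking expectations yields $\PR^\Gr_a[\tau_B < \tau' \mid \tau' \le T] \ge \eps_0$, and hence the desired geometric inequality.
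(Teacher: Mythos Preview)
Your proof is correct and follows essentially the same route as the paper's: both arguments decompose the excursion crossing $Q$ into three pieces, invoke property~(iii) to show the middle crossing piece hits $B$ with probability at least $\eps_0$, and invoke property~(ii) to compare, from a point of $B$, the chance of reaching $A$ with versus without a further crossing of $\ell'$. The only cosmetic differences are that you work with the unconditioned walk measure $\PR^\Gr_a$ and carry the event $E$ explicitly (whereas the paper computes directly under the excursion measure $\PR^{\Gr}_{\pathfromto{a}{A}}$), and that you apply~(ii) before~(iii) rather than after; the resulting inequality and the derivation of~\eqref{eq: crossing proba} from~\eqref{eq: crossing condn} are identical.
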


\begin{remark}
By applying~\eqref{eq: crossing condn} to a ``rainbow of discrete boundary quadrilaterals'' one can improve~\eqref{eq: crossing proba} to
\begin{align*}
\PR_{ \pathfromto{a}{A} }^{\Gr} [\tau' \leq T ] \leq K e^{-\alpha m (Q)}
\end{align*}
for some positive absolute constants $K, \alpha$. This can be seen as an analogue of the weak Beurling estimate for the random walk~(e.g.,~\cite[Proposition~2.11]{CS-discrete_complex_analysis_on_isoradial}) for excursions of random walks. This extension is nevertheless not necessary here, and we leave the details to the reader. (The non-trivial part is to divide a big discrete quadrilateral into a rainbow of smaller ones so that the latter are \textit{discrete} and~\eqref{eq: crossing condn} remains valid.)
\end{remark}

\begin{proof}[Proof of Proposition~\ref{lem: crossing an annulus is a uniformly nontrivial event}]
Consider a random walk excursion from  $a$ to $A$. If this excursion crosses $\ell'$, decompose it into three subwalks: roughly, in $\Gr$ and not crossing $\ell'$ from $a$ to $S_0$, then in $Q$ from $S_0$ to $S_2$, and then in $\Gr$ from $S_2$ to $A$. Formally, the last part starts after the first crossing of $\ell'$ and the middle part after the last crossing of $\ell$ before it. Decompose the partition function of random walk excursions from $a$ to $A$ accordingly. Lemma \ref{lem: Kemppainen lemma}~(iii) now implies that the middle part of this decomposition visits $B$ with probability at least $\eps_0$, so
\begin{align}
\nonumber
\PR_{ \pathfromto{a}{A} } [\tau' \leq T \vert \tau \leq T ] & \le \frac{1}{\eps_0 } \PR_{ \pathfromto{a}{A} } [\tau_B \leq \tau' \leq T \vert \tau \leq T ] \\
\label{eq: excursion Beurling step}
&= \frac{ \PR_{ \pathfromto{a}{A} } [\tau_B \leq \tau' \leq T  ]  }{\eps_0 \PR_{ \pathfromto{a}{A} } [  \tau \leq  T ]  },
\end{align}
where $\tau_B $ denotes the hitting time of $B$.
(Inside this proof, all excursion probabilities are in $\Gr$, so we drop the superscripts.) Next, we claim that
\begin{align}
\label{eq: exc Beurling step 2}
\PR_{ \pathfromto{a}{A} } [\tau_B \leq \tau' \leq T  ] \le \frac{1}{c-1} \PR_{ \pathfromto{a}{A} } [ \tau_B  \leq  T < \tau'   ] ,
\end{align}
where $c$ is the constant in Lemma \ref{lem: Kemppainen lemma}~(ii). To prove this, divide the excursion from  $a$ via first $B$ and then $S_2$ to $A$ (resp. via $B$ but avoiding $S_2$) into two subwalks: 
first from $a$ until first hitting $B$, and then from $B$ via $S_2$ (resp. avoiding $S_2$) to $A$. Studying the latter parts, observe that for a random walk $\eta$ launched from $y \in B$,
\begin{align*}
\PR_{ \pathfromto{y}{A} } [ \tau' \leq T  ]  &= \frac{\PR_{ y } [ \tau'  \leq T \ \& \ \edgeof{\eta(T-1) }{\eta(T)} \in A  ]  }{ \PR_{ y } [  \edgeof{\eta(T-1) }{\eta(T)} \in A  ]  } \\
& \le \frac{ \max_{x \in S_2} \HarmMeas_\domain (x; A)  }{ \min_{x \in  B} \HarmMeas_\domain (x; A)  }  \\
& \le 1/c,
\end{align*}
where we used Lemma \ref{lem: Kemppainen lemma}~(ii).
This also implies $\PR_{ \pathfromto{y}{A} } [ \tau' > T  ] \ge 1 - 1/c$. Taking $c > 1$ in Lemma \ref{lem: Kemppainen lemma}, we now obtain \eqref{eq: exc Beurling step 2}. Finally, combining \eqref{eq: excursion Beurling step} and \eqref{eq: exc Beurling step 2}, we obtain
\begin{align*}
\PR_{ \pathfromto{a}{A} } [\tau' \leq T \vert \tau \leq T ]
& \le \frac{1}{\eps_0 (c-1)} \frac{  \PR_{ \pathfromto{a}{A} } [ \tau_B  \leq  T < \tau'   ]   }{  \PR_{ \pathfromto{a}{A} } [  \tau \leq T ]   } \\
&\le \frac{1}{\eps_0 (c-1)} \ \PR_{ \pathfromto{a}{A} } [  \tau' > T  \vert  \tau \leq T ] . 
\end{align*}
This proves \eqref{eq: crossing condn}.

To prove~\eqref{eq: crossing proba}, notice that the excursion from $a$ to $A$ has to cross $\ell$ to reach $\ell'$. Thus, 
\begin{align*}
\PR_{ \pathfromto{a}{A} }^{\Gr} [\tau' \leq T ] 
&\leq
\PR_{ \pathfromto{a}{A} }^{\Gr} [\tau' \leq T \vert \tau \leq T ].
\end{align*}
If $c$ is chosen very large, then by~\eqref{eq: crossing condn} the conditional probability on the right-hand side above is much smaller than the conditional probability of its complement event. Thus, the right-hand side above is small.
\end{proof}

\subsection{On the precompactness of WST boundary-to-boundary branches}
\label{subsubsec: precompactness}

Consider now a random walk excursion from a boundary-neighbouring vertex $e_1^\circ$, where $e_1 = \edgeof{e_1^\circ}{e_1^\bdry} \in \bdry \Edg$, to a given boundary edge $e_2 \in \bdry \Edg$. The loop-erasure of this excursion is in distribution equal to a WST boundary-to-boundary branch from $e_1$ to $e_2$~(see~\cite{Wilson-generating_random_spanning_trees} and~\cite[Corollary~3.5]{KKP}).
Proposition~\ref{lem: crossing an annulus is a uniformly nontrivial event} now states that for this random walk excursion and any compatible isoradial quadrilateral $Q$ with $m(Q)$ large enough, we have
 \begin{align}
 \label{eq: LERW crossing proba bound}
\PR_{ \pathfromto{ e_1^\circ }{ e_2 } }^{\Gr} [\text{excursion crosses }Q] < \tilde{\eps}.
\end{align}
The loop-erasure of the excursion (i.e., the WST branch) is of course even less likely to cross $Q$.

Crossing probability bounds as~\eqref{eq: LERW crossing proba bound} above, uniform over all planar domains and quadrilaterals, were in~\cite{KS} shown to imply the precompactness of a chordal random curve model (see also~\cite{mie}), in this case the WST boundary-to-boundary branch. Assuming the crossing probability bound for a single boundary-to-boundary branch, a similar bound was derived for multiple branches in~\cite{mie2}. These arguments in~\cite{KS, mie, mie2} are not specific for the WST but hold for a wide range of random curve models; in this sense Proposition~\ref{lem: crossing an annulus is a uniformly nontrivial event} is the core of the precompactness WST boundary-to-boundary branches.
For the remaining details on the precompactness, we refer to~\cite[Proof of precompactness in Theorem~6.8]{mie2}.

\subsection{Ratios of discrete harmonic functions near a Dirichlet boundary}

Consider now a sequence of isoradial graphs $\InfiniteGr_n$ with mesh sizes $\delta_n \to 0$. Let $\Gr_n = (\Vert_n, \Edg_n)$ be a sequence simply-connected isoradial subgraphs such that, as planar domains, $\Gr_n \to \domain$ in the sense of Carath\'{e}odory, with the corresponding conformal maps $\confmapH_n : \domain_n \to \bH $ and $\confmapH : \domain \to \bH $, where $\confmapH_n  \to \confmapH$ uniformly on the compact subsets of $\domain$. Let the interior vertices $a_n \in \Vert_n^\circ$ approximate the prime end $a $ of $\domain$ in the sense that $a_n' =: \confmapH_n (a_n) \to a' =:  \confmapH(a)$, and assume that $a_n$ are connected to (the vertex of $\Vert_n$ closest to) the reference point of the Carath\'{e}odory convergence by a path on the interior vertices $\Vert^\circ_n$.

\begin{thm}
\label{thm: ratios of discrete harmonic functions converge}
In the setup above, let $f_n, g_n : \Vert_n \to \C$ be non-negative discrete harmonic functions converging to the harmonic functions $f, g : \domain \to \C$, respectively, uniformly over compact subsets of $\domain$. Assume furthermore that $f_n, g_n$ attain zero boundary values in the image of a neighbourhood of $a'$ under $\confmapH_n^{-1}$, for all $n$. Then, we have
\begin{align*}
\frac{f_n (a_n)}{g_n (a_n)} \to \frac{\partial_y  (f \circ \confmapH^{-1} )  (a') }{ \partial_y (g \circ \confmapH^{-1} )  (a')  }
\end{align*}
if $\partial_y (g \circ \confmapH^{-1} )  (a')  \ne 0$, where $\partial_y$ denotes the vertical derivative (which is well defined at $a'$ by Schwarz reflection). 
\end{thm}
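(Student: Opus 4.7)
The plan is a discrete boundary-Harnack argument carried out in conformal coordinates on $\bH$. I will compare $f_n(a_n)/g_n(a_n)$ to a ratio of mesoscopic-scale harmonic-measure averages of $f_n$ and $g_n$ on a small inner arc around $a_n$, then use Schwarz reflection to extract the normal-derivative ratio in the limit.

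Set $x := \confmapH(a) \in \bR$. Fix a small $\rho > 0$ so that $\confmapH^{-1}(B(x,\rho) \cap \bH)$ sits inside the zero-boundary neighbourhood of $a$ from the hypothesis. Let $C_\rho := \{ z \in \bH : |z-x| = \rho \}$ and $\ell_\rho^{(n)} := \confmapH_n^{-1}(C_\rho) \subset \domain_n$; by the uniform convergence $\confmapH_n^{-1} \to \confmapH^{-1}$ on compacts of $\bH$, the arcs $\ell_\rho^{(n)}$ converge in Hausdorff distance to $\ell_\rho := \confmapH^{-1}(C_\rho)$. For each $n$, approximate $\ell_\rho^{(n)}$ by a simple dual-lattice path in $\InfiniteGr_n^*$, thus producing a simply-connected isoradial subgraph $D^{(n)} \subset \Gr_n$ containing $a_n$, whose boundary splits into an inner arc $\Gamma_n$ (vertices along the dual path) and a portion $\Lambda_n \subset \bdry\domain_n$ on which $f_n = g_n = 0$ by hypothesis. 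Discrete harmonicity and optional stopping give
\begin{equation*}
f_n(a_n) \;=\; \sum_{v \in \Gamma_n} \HarmMeas_{D^{(n)}}(a_n;v)\, f_n(v), \qquad g_n(a_n) \;=\; \sum_{v \in \Gamma_n} \HarmMeas_{D^{(n)}}(a_n;v)\, g_n(v).
\end{equation*}

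Set $\tilde f := f \circ \confmapH^{-1}$ and $\tilde g := g \circ \confmapH^{-1}$. By Schwarz reflection across $(x-\rho, x+\rho) \subset \bR$, $\tilde f$ extends harmonically to $B(x,\rho)$ as a function odd in $\Im(z-x)$, whence
\begin{equation*}
\tilde f(x + r e^{i\theta}) = r \sin\theta \cdot \partial_y \tilde f(x) + O(r^2) \qquad (\theta \in (0,\pi))
\end{equation*}
uniformly in $\theta$ as $r \downarrow 0$, and likewise for $\tilde g$. Combined with the uniform convergence of $f_n \to f$ on compact subsets of $\domain$, this yields for $v \in \Gamma_n$ of polar angle $\theta_v \in (0,\pi)$ of $\confmapH_n(v)$ around $x$ the asymptotic $f_n(v) = \rho \sin\theta_v \cdot \partial_y \tilde f(x) + O(\rho^2) + o_n(1)$, and similarly for $g_n(v)$. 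Substituting into the two decompositions and forming the ratio, the common prefactor $\rho \sum_v \HarmMeas(a_n;v) \sin\theta_v$ cancels between numerator and denominator, leaving
\begin{equation*}
\frac{f_n(a_n)}{g_n(a_n)} = \frac{\partial_y \tilde f(x)}{\partial_y \tilde g(x)} + O(\rho) + o_n(1).
\end{equation*}
Sending $n \to \infty$ and then $\rho \downarrow 0$ completes the argument.

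I expect the main technical obstacle to lie in controlling the error terms uniformly in $n$. The Taylor remainder $O(\rho^2)$ is only negligible against the leading $\rho \sin\theta_v \cdot \partial_y \tilde f(x)$ when $\sin\theta_v$ is bounded below, so the argument requires showing that the discrete harmonic measure $\HarmMeas_{D^{(n)}}(a_n;\cdot)$ puts vanishingly little mass on the parts of $\Gamma_n$ lying close to $\Lambda_n$. This is precisely the role of Proposition~\ref{lem: crossing an annulus is a uniformly nontrivial event}: applied to thin boundary quadrilaterals carved from $D^{(n)}$ between $\Gamma_n$ and $\Lambda_n$, it bounds uniformly (in $n$ and in the mesh $\delta_n$) the probability that a random-walk excursion from $a_n$ exits $\Gamma_n$ near its endpoints. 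Safely dividing by $g_n(a_n)$ likewise uses this estimate together with the discrete Harnack principle along an interior path from $a_n$ to the Carath\'{e}odory reference vertex, where $g_n$ is bounded away from $0$ by its convergence to $g$.
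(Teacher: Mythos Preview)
Your proposal is correct and follows essentially the same route as the paper: stop the walk at a mesoscopic semicircle around $a'$ in conformal coordinates, Taylor-expand via Schwarz reflection, cancel the common prefactor in the ratio, and invoke Proposition~\ref{lem: crossing an annulus is a uniformly nontrivial event} to show that the exit measure puts negligible mass near the endpoints of the arc. The paper carries out the last step by introducing a second scale $r\ll R$ explicitly and applying the excursion estimate to the walk in the full graph $\Gr_n$ conditioned to reach the far-away nonzero boundary values of $f_n$ (rather than in $D^{(n)}$ conditioned to reach $\Gamma_n$), but the content is the same.
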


\begin{remark}
\label{rem: ratios of disc harm fcns}
The theorem above holds even if $f_n$ and $g_n$ only converge uniformly over compact subsets $\confmapH^{-1}(V)$, where $V$ is some neighbourhood of $a'$ in $\bH$; this is seen by simply applying it to $f_n$ and $g_n$ restricted to suitable subgraphs of $\Gr_n$.
\end{remark}

One way to prove Theorem~\ref{thm: ratios of discrete harmonic functions converge} would be based on~\cite[Corollary~3.8]{CW-mLERW}, see~\cite[Proof of Proposition~3.14]{CW-mLERW} for an analogue. We present below a proof based on Proposition~\ref{lem: crossing an annulus is a uniformly nontrivial event}.

\begin{proof}[Proof of Theorem~\ref{thm: ratios of discrete harmonic functions converge}]
Roughly, the idea of the proof is to consider $f_n (a_n)$ and $g_n (a_n)$ as expectations with respect to the random walk launched from $a_n$. Then, we split the walks into a beginning in the vincinity of the boundary point $a$ and a tail after exiting that vincinity for a first time.

Formally, let $A'= \bdry B(a', R) \cap \bH$ be a semicircle of radius $R$ around $a'$ in $\bH$, where $R$ is a (small) fixed number. Let $r < R$ be another (very small) fixed number and denote 
\[
A'_r= \{ w \in A' \ : \Im w > r \}.
\]
 We assume that $R$ is small enough, so that the curve $\bdry B(a', 2R) $ separates $\confmapH_n ( a_n )$ from the nonzero boundary values of $f_n$ and $g_n$ (as mapped to $\bH$). The index $n$ will be assumed large enough so  the edges of $\confmapH_n (\Gr_n)$ inside or intersecting the half-circle $A'$ have radii $< r$ (this is guaranteed for large enough $n$ by standard harmonic measure arguments), and $d(\confmapH_n (a_n), a') < r$. Finally, denote by $A_n, A_{r; n} \subset \domain_n $ the images in $\domain_n$ of the  sets $A'$ and $A'_r$, respectively, under $\confmapH_n^{-1}$.

Let $\tau_{\bdry \Vert_n}$ denote the hitting time of $\bdry \Vert_n$ by a random walk, and define $\tau_n = \min \{ \tau_{A_{n}}, \tau_{\bdry \Vert_n} \}$ and $\tau_{r, n} = \min \{ \tau_{A_{r, n}}, \tau_{\bdry \Vert_n} \}$, where $\tau_{A_{ n}}$ and $\tau_{A_{r; n}}$ are the hitting times of $A_n$ and $ A_{r; n} $, respectively, by the walk trajectory.
Notice that if $\tau_n =\tau_{A_n} < \infty$, the walk is at that time at a distance $\le \mesh_n$ from $A_n$. We denote the set of all such possible vertices as $A_n^{\InfiniteGr}$.  We define analogously  the vertex set $A_{r; n}^{\InfiniteGr}$ related to $A_{r; n}$. 

Let us now perform an analysis on $f_n$ --- identical conclusions hold for $g_n$. First, expressing $f_n (a_n)$ as an expectation
with respect to a random walk $\eta$ launched from $a_n$, and then using the fact that $A_n$ separates $a_n$ from the nonzero boundary values of $f_n$, we have
\begin{align}
\nonumber
f_n (a_n) &= \sum_{\substack{b \in \bdry \Vert_n \\ f(b) \ne 0 } } f_n (b) \PR^{a_n} [\eta(\tau_{\bdry \Vert_n } ) = b ] \\
\label{Exc steps}
&=    \sum_{\substack{b \in \bdry \Vert_n \\ f_n (b) \ne 0 } } f_n (b)  \sum_{v \in A_n^{\InfiniteGr}}  \PR^{a_n} [\eta(\tau_{\bdry \Vert_n } ) = b \  \& \ \eta(\tau_{n}) = v ].
\end{align}
Next, we use Proposition~\ref{lem: crossing an annulus is a uniformly nontrivial event}: study the random walk $\eta$, launched from $a_n$ and conditioned on hitting $\bdry \Vert_n$ at a fixed boundary vertex $b$ where $f_n$ is nonzero. If this walk crosses the curve segments $A_n \setminus A_{r; n}$, its image in $\bH$ crosses a half-annulus centered at $a' \pm R$, with inner and outer radii given by $2r$ and $R-r$ (recall that we have discretization errors $\le r$), respectively.
Proposition~\ref{lem: crossing an annulus is a uniformly nontrivial event} guarantees that the probability of such a crossing is $o(1)$ as $r/R \to 0$, the error term small \emph{uniformly} over the mesh sizes $\mesh_n$ and $b \in \bdry \Vert_n$ such that $f_n (b) \ne 0$. We denote the error term by $o_{r/R}(1)$, to keep explicit the variable in the Landau notation. With the positivity of $f_n$, this implies
 \begin{align}
 \label{Exc steps 2}
 \tag{\ref{Exc steps} b}
f_n (a_n) &= (1 + o_{r/R}(1))
\sum_{\substack{b \in \bdry \Vert_n \\ f_n (b) \ne 0 } } f_n (b)   \sum_{v \in A_{r; n}^{\InfiniteGr}} \PR^{a_n} [\eta(\tau_{\bdry \Vert_n } ) = b \  \& \ \eta(\tau_{n}) = v  ].
\end{align}

Next, notice that, using the separating property of $A_n$ and the strong Markov property of the random walk, we have
\begin{align*}
\PR^{a_n} [\eta(  \tau_{\bdry \Vert_n }  ) = b \ \& \ \eta(\tau_{n }) = v ] &=
\PR^{a_n} [ \eta(\tau_{ n }) = v ] \PR^{v} [\eta(  \tau_{\bdry \Vert_n }  ) = b  ].
\end{align*}
So \eqref{Exc steps 2} becomes
 \begin{align}
\nonumber
f_n (a_n) &= (1 + o_{r/R}(1))
 \sum_{v \in A_{r; n}^{\InfiniteGr}} \PR^{a_n} [ \eta(\tau_{ n }) = v ] \sum_{\substack{b \in \bdry \Vert_n \\ f(b) \ne 0 } } f_n (b)  \PR^{v} [\eta(  \tau_{\bdry \Vert_n }  ) = b  ] \\
 \nonumber
 &=  (1 + o_{r/R}(1))
 \sum_{v \in A_{r; n}^{\InfiniteGr} } \PR^{a_n} [ \eta(\tau_{ n }) = v ] f_n(v ) \\
   \label{Exc steps 3}
 \tag{\ref{Exc steps} c}
 &= (1 + o_{r/R}(1))   \PR^{a_n}  [\eta(\tau_{n }) \in A_{r; n}^{\InfiniteGr} ]  \EX^{a_n}  [f_n ( \eta(\tau_{n }) ) \ \vert \ \eta(\tau_{n }) \in A_{r; n}^{\InfiniteGr}  ].
\end{align}

We now study the function $f_n (v)$ in~\eqref{Exc steps 3}, where $v \in A_{r; n}^{\InfiniteGr}$. Recall first that $f_n$ converges to $f$ uniformly on the compact subsets of $\domain$, so $f_n (v) = f(v) + o_n^{(r, R)}(1)$ as $n \to \infty$;  by uniformity, the error term $o_n^{(r, R)}(1)$ is small uniformly over $v \in A_{r; n}^{\InfiniteGr}$ once $r $ and $R$ are fixed --- we add the superscript $(r, R)$ to the Landau notation to indicate the dependency. As a next step, let $\tilde{v}$ be the point of $A_{r; n}$ closest to the vertex $v$, so $d(\tilde{v}, v) \leq \mesh_n$. Since $f$ is continuous and thus uniformly continuous on a compact set, we can now substitute $f (v ) $ with $ f(\tilde{v})$, making an error of $o_n^{(r, R)}(1)$ again small uniformly over $v$, i.e., $f (v) = f(\tilde{v}) + o_n^{(r, R)}(1)$. Finally, since $f$ has zero boundary values at $a$, the harmonic function $f \circ \confmapH^{-1}: \bH \to \R$ extends by Schwarz reflection to a smooth function in a neighbourhood of $a' = \confmapH (a) \in \R$. Thus, we can approximate $f(\tilde{v}) = f \circ \confmapH^{-1} ( \confmapH (\tilde{v})  )$ in the previous expression by the Taylor expansion of $f \circ \confmapH^{-1}$ at $a'$, giving $f(\tilde{v}) = \Im (\confmapH (\tilde{v}) )  \partial_y \left( f \circ \confmapH^{-1} \right) (a')  + O_R (R^2)$, where the error term $O_R(R^2)$ only depends on the function $f \circ \confmapH^{-1}$. Altogether, this chain of approximations yields
\begin{align}
\label{eq: expand f_n}
f_n (v) = \Im (\confmapH (\tilde{v}) )  \partial_y \left( f \circ \confmapH^{-1} \right) (a') + O_R (R^2) + o_n^{(r, R)}(1),
\end{align}
where the error terms are small uniformly over $v \in A_{r; n}^{\InfiniteGr}$.

Now, substitute~\eqref{eq: expand f_n} back into~\eqref{Exc steps 3} and use the uniformity of the Landau terms of~\eqref{eq: expand f_n} in $v \in A_{r; n}^{\InfiniteGr}$:
 \begin{align}
f_n (a_n) 
\nonumber
 =  (1 + o_{r/R}(1))   \PR^{a_n}  [\eta(\tau_{n }) \in A_{r; n}^{\InfiniteGr} ]  \bigg( & \partial_y \left( f \circ \confmapH^{-1} \right) (a')  \EX^{a_n}  [ \Im (\confmapH (\tilde{v}))  \ \vert \ v = \eta(\tau_{n }) \in A_{r; n}^{\InfiniteGr}  ] 
 \\ & + O_R (R^2) + o_n^{(r, R)}(1) \bigg).
    \label{Exc steps 4}
 \tag{\ref{Exc steps} d}
\end{align}

Finally, use the expansion \eqref{Exc steps 4} for both $f_n$ and $g_n$. We have
\begin{small}
 \begin{align}
\frac{f_n (a_n) }{  g_n (a_n)}
 &= \frac{    (1 + o_{r/R}(1))   \left( \partial_y \left( f \circ \confmapH^{-1} \right) (a')  \EX^{a_n}  [ \Im (\confmapH (\tilde{v}))  \ \vert \ v = \eta(\tau_{n }) \in A_{r; n}^{\InfiniteGr}  ] + O_R (R^2) + o_n^{(r, R)}(1) \right)   }{    (1 + o_{r/R}(1))   \left( \partial_y \left( g \circ \confmapH^{-1} \right) (a')  \EX^{a_n}  [ \Im (\confmapH (\tilde{v}))  \ \vert \ v = \eta(\tau_{n }) \in A_{r; n}^{\InfiniteGr}  ] + O_R (R^2) + o_n^{(r, R)}(1) \right)  }.
\end{align}
\end{small}%
Notice that $\Im (\confmapH (\tilde{v_n})) \geq r$ by definition. Choosing for instance $R=\eps^2$ and $r=\eps^3$, and then taking first $\eps$ small enough and then $n$ large enough, we observe that $f_n (a_n) / g_n (a_n)$ can be made arbitrarily close to 
\begin{align*}
\frac{\partial_y  (f \circ \confmapH^{-1} )  (a') }{ \partial_y (g \circ \confmapH^{-1} )  (a')  }.
\end{align*}
This proves the claim.
\end{proof}

\subsection{Proof of Theorem~\ref{prop: disc harm conv results}}
\label{subsec: proof of disc harm conv res}

Parts~(i) and~(ii) are proven in~\cite[Corollary~3.11]{CS-discrete_complex_analysis_on_isoradial} and~\cite[Theorem~3.13]{CS-discrete_complex_analysis_on_isoradial}, respectively.
Part~(iii) follows by applying Theorem~\ref{thm: ratios of discrete harmonic functions converge} (see also Remark~\ref{rem: ratios of disc harm fcns}) to the fraction
\begin{align*}
 \frac{ \left( \PoissonK^{(n)} (u, e^{(n)}_2) \bigg/ \PoissonK^{(n)} (w^{(n)}, e^{(n)}_2) \right) }{ \GreenK^{(n)} (v^{(n)}, u )  }.
\end{align*}
This is a ratio of two positive discrete harmonic functions of $u$, both converging uniformly by parts~(i) and~(ii). 
 Part~(iv) follows by applying Theorem~\ref{thm: ratios of discrete harmonic functions converge} to
 \begin{align*}
 \frac{ \GreenK^{ \Gr^{(n)} } (v^{(n)}, u )}{ \GreenK^{ \tilde{\Gr}^{(n)} } (w^{(n)}, u)}.
 \end{align*}
For part~(iv), notice also the covariance formula for the boundary derivatives on Green's functions under conformal mapping-out, derived in the end of Section~\ref{subsubsec: cts kernels}.
{ \ }$\hfill \qed$

\bigskip

\section{Proof of boundedness in Proposition~\ref{prop: cts martingale}}
\label{app: boundedness pf}

We will actually prove a slightly stronger statement: the martingale~$\Mart^{(\star)}_t( z , \omega) $ in~\eqref{eq: cts mgale} is bounded, and this does not rely on any knowledge about the weak limit $W$ but only on how~$\Mart^{(\star)}_t( z , \omega) $ is defined in terms of a given continuous driving function. This small difference will turn important later when we want to use the weak convergence $W^{(n)} \to W$.

\begin{lem}
\label{lem: mgale boundedness}
Let $\Mart^{(\star)}_t( z , \omega) $ be as in~\eqref{eq: cts mgale} but constructed from any continuous driving function $W$ (not necessarily on the support of the weak limit). For any $z$ and $\omega$, there exists $C > 0$ such that for all continuous functions $W$ and all $t \le \tau$, we have
\begin{align*}
\big\vert \Mart^{(\star)}_t( z , \omega) \big\vert < C.
\end{align*}
\end{lem}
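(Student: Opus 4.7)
The plan is to bound each factor of the product~\eqref{eq: cts mgale} by a constant depending on $z$, $\omega$, and the localization data ($U$, $\epsilon$, and the initial points $X^{(i)}_0$), but \emph{not} on the particular driving function $W$ or the time $t \leq \tau$. The structural input that makes this possible is the definition of the continuous modification $\tau$: for any continuous $W$ and any $t \leq \tau$, the hull $K_t$ is contained in the fixed bounded hull $\overline{U_\epsilon}$. Consequently $\tau \leq T_0 := \mathrm{hcap}(\overline{U_\epsilon})/2 < \infty$, and $W_t$ stays in a bounded real interval $I \subset \overline{U_\epsilon} \cap \bR$.

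First I would control the conformal trajectories. For $z \in \bH \setminus U_\epsilon$ the image $g_t(z)$ is well defined and $g_t(z) \in \bH$; applying Gr\"{o}nwall's lemma to~\eqref{eq: Loewner ODE} (in particular to its vertical and modulus components) and using $K_t \subset \overline{U_\epsilon}$ yields both an upper bound $|g_t(z)| \leq C_1(z)$ and a positive lower bound $\Im g_t(z) \geq c_1(z) > 0$, uniformly over admissible driving functions. For the real marked points $X^{(i)}_0$ with $i \neq j$, the localization condition places them outside $\overline{U_\epsilon}$, say at real distance $d_i > 0$; the monotonicity of the real Loewner flow together with $W_t \in I$ gives
\begin{align*}
|X^{(i)}_t - W_t| \geq d_i \quad \text{and} \quad |X^{(i)}_t - X^{(k)}_t| \geq (X^{(k)}_0 - X^{(i)}_0)\, e^{-2 T_0/(d_i d_k)} > 0,
\end{align*}
together with $|X^{(i)}_t| \leq C_2$. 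The second estimate follows by integrating $\partial_t \log(X^{(k)}_t - X^{(i)}_t) = -2/[(X^{(k)}_t - W_t)(X^{(i)}_t - W_t)]$. Inserting these lower bounds into~\eqref{eq: LE for derivative} and integrating yields $g_t'(X^{(i)}_0) \geq \exp(-2 T_0/d_i^2) > 0$.

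With these ingredients, the four types of factors appearing in~\eqref{eq: cts mgale} are each bounded. The numerator $\PoissonKH(g_t(z),W_t) = \Im g_t(z) / (\pi |g_t(z)-W_t|^2)$ is bounded above since $\Im g_t(z) \leq C_1(z)$ and $|g_t(z)-W_t| \geq \Im g_t(z) \geq c_1(z)$. The prefactors $\PoissonKH(\omega, X^{(i)}_0)$ are fixed positive constants. The derivatives $g_t'(X^{(i)}_0)$ are bounded below by the previous paragraph. Finally, $\PartF_\star$ is continuous on $\{x_1 < \cdots < x_{2N}\}$ and strictly positive there by \cite[Theorem~4.1]{KKP}; since the tuple $(X^{(1)}_t, \ldots, X^{(2N)}_t)$ stays in a compact subset of that set (bounded and with mutual gaps bounded below), $\PartF_\star(X^{(1)}_t, \ldots, X^{(2N)}_t) \geq c_3 > 0$. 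Multiplying the bounds concludes the proof. The main obstacle is organising the compactness arguments of the second paragraph cleanly; once the uniform lower bounds $c_1(z)$, $d_i$, $\delta_{ik}$ are established, everything else is a routine combination of standard Loewner-chain estimates (see, e.g.,~\cite{Lawler-SLE_book}).
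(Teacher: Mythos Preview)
Your overall strategy---bound each factor of~\eqref{eq: cts mgale} separately---matches the paper's, but the key step where you lower-bound $|X^{(i)}_t - W_t|$ has a genuine gap. The claim ``$W_t$ stays in a bounded real interval $I \subset \overline{U_\epsilon} \cap \bR$'' confuses the driving function with the tip of the curve: $W_t = g_t(\gamma(t))$ lives in the \emph{image} half-plane, and while $\gamma(t) \in \overline{U_\epsilon}$, its image $W_t$ need not lie in $\overline{U_\epsilon} \cap \bR$. For a concrete counterexample, take $\overline{U_\epsilon}$ a thin tall rectangle $[-\delta,\delta]\times[0,H]$ with $\delta \ll H$ and let the curve run up the slit to $iH$; then $W_t$ ranges over an interval of length $\sim H$, not $\sim \delta$. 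In that same example, with $X^{(i)}_0 = 2\delta$ one computes $X^{(i)}_t - W_t \approx 2\delta^2/H \ll \delta = d_i$, so your claimed bound $|X^{(i)}_t - W_t| \geq d_i$ is false. Since this bound feeds directly into your estimates for $|X^{(i)}_t - X^{(k)}_t|$ and for $g_t'(X^{(i)}_0)$, the whole chain collapses.

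A secondary issue: the Gr\"onwall argument for $\Im g_t(z)$ gives $(\Im g_t(z))^2 \geq (\Im z)^2 - 4t$, which is only positive when $\Im z > 2\sqrt{T_0}$, so it does not cover all $z \in \bH \setminus U_\epsilon$ (in particular $z$ close to $\bR$ but horizontally far from $U_\epsilon$). The easy fix is the monotonicity $\Im g_t(z) \geq \Im g_{\overline{U_\epsilon}}(z)$, which the paper uses. For the main gap, the paper replaces your ODE reasoning by harmonic-measure arguments: one lower-bounds the harmonic measure in $H_t$ of each boundary arc between consecutive marked points (seen from a fixed interior reference point) by the corresponding harmonic measure in $\bH \setminus \overline{U_\epsilon}$, transports it by conformal invariance to $\bH$, and then invokes Beurling's estimate together with the lower bound on $\Im g_t(w)$ to force the gaps $X^{(i+1)}_t - X^{(i)}_t$ to stay bounded away from zero. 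A similar harmonic-measure argument gives the upper bound on $X^{(2N)}_t - X^{(1)}_t$, after which compactness and Gr\"onwall finish exactly as you outline.
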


\begin{proof}
We deduce an upper bound for the right-hand side of~\eqref{eq: cts mgale}, uniform over $t \le \tau$ and over the driving function $W$. The factors $\PoissonKH (  \omega, X^{(i)}_0)$ are deterministic so they may be omitted in this analysis.

Let us first lower-bound $\Im ( g_t(z) )$ and hence upper-bound $\PoissonKH ( g_t(z) , W_t )$ (see~\eqref{eq: defn of cont poisson kernel}). Fix any $w \in \bH \setminus U_\epsilon$. It follows from Loewner's equation that $\Im (g_t (w)) \geq \Im (g_{U_\epsilon} (w))$, where $g_{U_\epsilon}$ is the mapping-out function of $U_\epsilon$. 
Clearly, we can lower-bound the harmonic measure of \textit{any} curve from $z$ to $\bdry H_t$, as seen from $w$ in $H_t$, by the probability of a Brownian motion from $w$ looping around $z$ before exiting $\bH \setminus U_\epsilon$. By conformal invariance, also the harmonic measure of the straight vertical line segment from $g_t(z)$ to $\R$, as seen from $g_t(w)$ in $\bH$, has the same lower bound. On the other hand, since we have $\Im (g_t (w)) \geq \Im (g_{U_\epsilon} (w)) $, the distance from $g_t (w)$ to this vertical line segment is at least $\Im (g_{U_\epsilon} (w)) - \Im (g_t(z)) $, and Beurling's estimate now upper-bounds the same harmonic measure by (essentially) a power of $\Im (g_t(z)) / \Im (g_{U_\epsilon} (w))$. For the lower and upper bounds to be consistent, $\Im ( g_t(z) )$ cannot be too small compared to $\Im (g_{U_\epsilon} (w)) $. 

Let us next lower-bound $\PartF_\star (X^{(1)}_t,  \ldots,  X^{(2N)}_t)$. By translation invariance (see~\eqref{eq: cont ptt fcns - conditional}--\eqref{eq: cont ptt fcns - unconditional}), we may do it assuming $X^{(1)}_t = 0$. The strategy is to first find both upper and lower bounds for the differences $( X^{(2)}_t - X^{(1)}_t ), \ldots, ( X^{(2N)}_t - X^{(2N-1)}_t ) $.  Then, assuming $X^{(1)}_t = 0$ and these upper and lower bounds, we obtain a compact set of coordinates $(X^{(1)}_t,  \ldots,  X^{(2N)}_t)$, and the continuous function $\PartF_\star (X^{(1)}_t,  \ldots,  X^{(2N)}_t)$ attains a minimum under these assumptions. This minimum is positive by~\cite{KKP}. It remains to find the upper and lower bounds for $( X^{(2)}_t - X^{(1)}_t ), \ldots, ( X^{(2N)}_t - X^{(2N-1)}_t ) $. By an argument very similar to the previous paragraph, one can lower-bound the harmonic measures of all the intervals $(-\infty, X^{(1)}_t), (X^{(1)}_t, X^{(2)}_t), \ldots, (X^{(2N)}_t, + \infty)$, as seen from $g_t(w)$ in $\bH$. Since $\Im (g_t (w)) \geq \Im (g_{U_\epsilon} (w))$, Beurling's estimate now shows that $( X^{(2)}_t - X^{(1)}_t ), \ldots, ( X^{(2N)}_t - X^{(2N-1)}_t )$ are lower-bounded. For the upper bound, suppose for a contradiction that we had $( X^{(2N)}_t -  X^{(1)}_t )/2 \geq C \Im (w)$ for a large enough $C$. From Loewner's equation we have $\Im (g_t (w)) \leq \Im (w)$. Thus, the circular annulus centered at $\Re (g_t (w)) \in \R$ with inner radius $\Im (w)$ and outer radius $C \Im (w)$ disconnects $g_t(w)$ from either $(X^{(2N)}_t, + \infty)$ (if $\Re (g_t (w)) \leq ( X^{(2N)}_t +  X^{(1)}_t )/2$) or $(-\infty, X^{(1)}_t)$ (if $\Re (g_t (w)) \geq ( X^{(2N)}_t +  X^{(1)}_t )/2$). Beurling's estimate then upper-bounds the harmonic measure of one of these intervals by (essentially) a power of $1/C$. If $C$ is large enough, this contradicts the previously derived constant lower bound, so we must have $( X^{(2N)}_t -  X^{(1)}_t ) \leq 2 C \Im (w)$.

We are left with lower-bounding $g_t' (X^{(i)}_0)$. Notice that by~\eqref{eq: LE for derivative}
\begin{align*}
\partial_t g_t ' (X^{(i)}_0) & = - \frac{ 2 g_t ' (X^{(i)}_0) }{ (X^{(i)}_t - X^{(j)}_t )^2}.
\end{align*}
In the previous paragraph, we deduced $\vert X^{(i)}_t - X^{(j)}_t \vert \geq C$. It thus follows that
\begin{align}
\label{eq: Gronwall}
\partial_t g_t ' (X^{(i)}_0) & \geq - \frac{ 2  }{ C^2} g_t ' (X^{(i)}_0),
\end{align}
and by Gr\"{o}nwall's lemma and the initial condition $ g_0 ' (X^{(i)}_0) = 1$, we have $g_t ' (X^{(i)}_0) \geq \exp(-2t/C^2)$. Since the stopping time $\tau$ is less than the half-plane capacity of $U_\epsilon$, we obtain a lower bound for $g_t ' (X^{(i)}_0)$. This concludes the proof of boundedness.
\end{proof}

\bigskip

\section{Boundary-visiting SLEs}
\label{app: bdry vis SLE}

The scaling limit in Theorem~\ref{thm: bdry visit convergence} can interpreted as the initial segment of $\SLE(2)$ in $(\bH; X^{(\mathrm{in})}_0, X^{(\mathrm{out})}_0)$ conditioned to visit $\hat{X}^{(1)}_0, \ldots, \hat{X}^{(N')}_0$ in the order $\omega$. We review here the non-rigorous argument leading to this interpretation, following~\cite{JJK-SLE_boundary_visits, KKP}.

Recall that the $\SLE(\kappa)$ in $(\bH; \xin, \xout)$ is a random curve $\gamma$ defined as a conformal image of an $\SLE(\kappa)$ curve in $(\bH; 0, \infty)$. The initial segment of the curve $\gamma$ from $\xin$ to $\xout$, up to the stopping time $\tau$ as in Theorem~\ref{thm: bdry visit convergence}, can almost surely be described by a Loewner chain, namely the partition function $\SLE(\kappa)$ with~\cite{Dubedat-commutation}
\begin{align}
\label{eq: SLE SDE}
\begin{cases}
\ud W_t = \sqrt{\kappa} \ud B_t + \kappa \frac{\partial_{W} \PartF (W_t, X^{(\mathrm{out})}_t )}{ \PartF (W_t, X^{(\mathrm{out})}_t ) } \ud t \\
\ud X^{(\mathrm{out})}_t = \frac{2}{ X^{(\mathrm{out})}_t - W_t } \ud t ,
\end{cases}
\end{align}
where $W_0 = \xin$, and $X^{(\mathrm{out})}_0 = \xout$, and
\begin{align*}
\PartF (a, b) = \vert b - a \vert^{1-6/ \kappa}.
\end{align*}
Denote the probability measure of $\SLE(\kappa)$ in $(\bH; \xin, \xout)$ by
$
\mathsf{P}^{(\bH; \xin, \xout)}_{\kappa}.
$
We will in this appendix assume~$\kappa < 8$.

The $\SLE(\kappa)$ in $(\bH; \xin, \xout)$ almost surely avoids any finite collection of points for $\kappa < 8$~\cite{RS-basic_properties_of_SLE}, so the boundary-visiting SLE can only be defined via a suitable approximation procedure. It was motivated (non-rigorously) in~\cite{JJK-SLE_boundary_visits} that the probability to visit the $\varrho$-neighbourhoods of $\hat{x}_1, \ldots, \hat{x}_{N'} $ in the order $\omega$ is of the magnitude $( \varrho^{8/\kappa-1} )^{N'}$ as $\varrho \to 0$, and furthermore
\begin{align*}
\PartF (\xin, \xout) ( \varrho^{8/\kappa-1} )^{-N'} \mathsf{P}^{(\bH; \xin, \xout)}_{\kappa}
[ \gamma \text{ visits } B(\hat{x}_1, \varrho ), \ldots, B(\hat{x}_{N'}, \varrho) \text{ in the order } \omega] 
\end{align*}
should in the limit $\varrho \shrinkto 0$ tend to a positive function of $(\xin, \xout; \hat{x}_1, \ldots, \hat{x}_{N'})$ that satisfies certain M\"{o}bius covariance, PDEs, and asymptotics. Let us call this function the \emph{boundary visit amplitude}.

By~\cite[Theorem~1.1]{KKP}, the functions $\zeta_\omega$ in Theorem~\ref{thm: bdry visit convergence} satisfies the covariance, PDEs, and asymptotics of a boundary visit amplitude, at $\kappa = 2$. Thus, assuming the uniqueness of solutions to such a PDE problem, $\zeta_\omega$ \textit{is} the $\SLE(2)$ boundary visit amplitude. We finish the argument for general $\kappa$, denoting the boundary visit amplitude by $\zeta_\omega$.

Now, modify the limit procedure giving (conjecturally) the boundary visit amplitude \linebreak $\zeta_\omega (\xin, \xout; \hat{x}_1, \ldots, \hat{x}_{N'})$, taking instead $k_i \varrho$-neighbourhoods of each $x_i$, i.e., of different sizes but shrinking at the same rate in the limit $\varrho \to 0$. A guess for such a neighbourhood visit probability would be
\begin{align}
\nonumber
& \mathsf{P}^{(\bH; \xin, \xout)}_{\kappa} 
[ \gamma \text{ visits } B(\hat{x}_1, k_1 \varrho ), \ldots, B(\hat{x}_{N'}, k_{N'} \varrho) \text{ in the order } \omega]
\\
\label{eq: SLE bdry vis proba guess}
& \quad
= ( \varrho^{8/\kappa-1} )^{N'}
\left( \prod_{i=1}^{N'} k_i^{8/\kappa-1} \right) 
\frac{\zeta_\omega (\xin, \xout; \hat{x}_1, \ldots, \hat{x}_{N'})}
{ 
\PartF (\xin, \xout)
}
+ o(( \varrho^{8/\kappa-1} )^{N'}) \qquad \text{as } \varrho \to 0.
\end{align}
Assuming~\eqref{eq: SLE bdry vis proba guess} and using the conformal Markov property of the SLE, one obtains
\begin{align}
\nonumber
& \mathsf{P}^{(\bH; \xin, \xout)}_{\kappa} 
[ \gamma \text{ visits } B(\hat{x}_1, k_1 \varrho ), \ldots, B(\hat{x}_{N'}, k_{N'} \varrho) \text{ in the order } \omega \; \vert \; \mathscr{F}_{t \wedge \tau}]
\\
\nonumber
& \quad
=
\left( \prod_{i=1}^{N'} g'_t(\hat{X}^{(i)}_0)^{8/\kappa-1} \right)
\frac{
\zeta_\omega (W_t, X^{(\mathrm{out})}_t, \hat{X}^{(1)}_t,  \ldots  \hat{X}^{(N')}_t)
}
{ 
\PartF ( W_t, X^{(\mathrm{out})}_t )
}
( \varrho^{8/\kappa-1} )^{N'} + o(( \varrho^{8/\kappa-1} )^{N'})
\\
\label{eq: SLE mgale}
&
\qquad \qquad \qquad
\text{stopped at } \tau.
\end{align}

After all these heuristics, what one can rigorously prove that the leading coefficient of~\eqref{eq: SLE mgale}
\begin{align*}
M_t 
=
\left( \prod_{i=1}^{N'} g'_t(\hat{X}^{(i)}_0)^{8/\kappa-1} \right)
\frac{
\zeta_\omega (W_t, X^{(\mathrm{out})}_t, \hat{X}^{(1)}_t,  \ldots  \hat{X}^{(N')}_t)
}
{ 
\PartF (\xin, \xout)
} ,
\qquad
\text{stopped at } \tau,
\end{align*}
indeed is a positive $\mathscr{F}_t$ local martingale under $\mathsf{P}^{(\bH; \xin, \xout)}_{\kappa}$~\cite{JJK-SLE_boundary_visits, KKP}, i.e., under~\eqref{eq: SLE SDE}. (The drift part in its It\^{o} differential is one of the PDEs defining $\zeta_\omega$, given in~\eqref{eq: bdry vis PDE} for $\kappa =2$.) It is also bounded, by arguments similar to Lemma~\ref{lem: mgale boundedness}, hence a genuine martingale.

Construct now a new probability measure $\mathsf{Q}$ with the Radon--Nikodym derivatives
\begin{align}
\label{eq: RN der defining Q}
\frac{\ud \mathsf{Q}}{\ud \mathsf{P}^{(\bH; \xin, \xout)}_{\kappa}} \big\vert_{\mathscr{F}_t} = M_t/M_0.
\end{align}
By Girsanov's theorem, under the probability measure $\mathsf{Q}$, the process $W$ is up to time $\tau$ governed by
\begin{align}
\label{eq: bdry vis SLE dr fcn}
\ud W_t = \sqrt{\kappa} \ud B_t + \kappa \frac{\partial_{\xin} \zeta_\omega (W_t, X^{(\mathrm{out})}_t, \hat{X}^{(1)}_t,  \ldots  \hat{X}^{(N')}_t)}
{ \zeta_\omega (W_t, X^{(\mathrm{out})}_t, \hat{X}^{(1)}_t,  \ldots  \hat{X}^{(N')}_t) } \ud t ,
\end{align}
i.e., $W$ is the $\SLE(\kappa)$ type process with the partition function $\zeta_\omega$.

Going back to the heuristics, assuming~\eqref{eq: SLE mgale} we interpret~$M_t$ as the conditional probability of $\SLE(\kappa)$ in $(\bH; X^{(\mathrm{in})}_0, X^{(\mathrm{out})}_0)$ to visit $\hat{X}^{(1)}_0, \ldots, \hat{X}^{(N')}_0$. By~\eqref{eq: RN der defining Q}, $\mathsf{Q}$ should thus be interpreted as the measure and~\eqref{eq: bdry vis SLE dr fcn} as the driving function of the $\SLE(\kappa)$ in $(\bH; X^{(\mathrm{in})}_0, X^{(\mathrm{out})}_0)$, conditional on boundary visits at $\hat{X}^{(1)}_0, \ldots, \hat{X}^{(N')}_0$ in the order $\omega$.


%


\bigskip

\bibliographystyle{annotate}

\newcommand{\etalchar}[1]{$^{#1}$}

\end{document}